\newcommand{\labeltext}[2]{%
    \@bsphack
    \csname phantomsection\endcsname 
     \def\@currentlabel{#1}{\label{#2}}%
     \@esphack
}
\patchcmd{\NAT@test}{\else \NAT@nm}{\else \NAT@nmfmt{\NAT@nm}}{}{}
\DeclareRobustCommand\citepos
    \let\NAT@nmfmt\NAT@posfmt
\let\NAT@ctype\z@\NAT@partrue
\let\NAT@orig@nmfmt\NAT@nmfmt
\def\NAT@posfmt#1{\NAT@orig@nmfmt{#1's}}
\newtheorem{lemma}{Lemma}
\newtheorem{theorem}{Theorem}[section]
\newtheorem{example}{Example}[section]
\newcommand{\ba}{\begin{align}} \newcommand{\ea}{\end{align}}
\newcommand{\baa}{\begin{align*}} \newcommand{\eaa}{\end{align*}}
\newcommand{\ben}{\begin{enumerate}} \newcommand{\een}{\end{enumerate}}
\newcommand{\bi}{\begin{itemize}} \newcommand{\ei}{\end{itemize}}
\newcommand{\ud}{\mathrm{d}}
\newcommand{\E}[1]{\operatorname{E}\left[ #1 \right]}
\newcommand{\var}[1]{\operatorname{Var}\left[ #1 \right]}
\newcommand{\cov}[2]{\operatorname{Cov}\left[ #1,#2 \right]}
\newcommand{\corr}[2]{\operatorname{Corr}\left[ #1,#2 \right]}
\newenvironment{proof}{\noindent\\ \noindent\relax{\sc
  Proof}}{{\samepage\par\nopagebreak\hbox
  to\hsize{\hfill$\Box$}}}
\newcommand{\be}{\begin{equation}} \newcommand{\ee}{\end{equation}}
\newcommand{\bd}{\begin{displaymath}} \newcommand{\ed}{\end{displaymath}}
\newcommand{\proglang}[1]{\texttt{#1}}
\title{First two moments and cross--moments of some coalescent times of the pure birth tree}
\author{Krzysztof 
 Bartoszek$^{1,\ast}$, Bayu Brahmantio$^{1,\dag}$, Woodrow Hao Chi Kiang$^{1,\ddag}$
  \\
  \noindent {\small \it 
  $^{1}$Department of Computer and Information Science,
  } \\
  {\small \it
   Link\"oping University, Link\"oping, Sweden}
  \\
  {\small \it  e--mail: $^{\ast}$krzysztof.bartoszek@liu.se; krzbar@protonmail.ch }
  \\  {\small \it $^{\dag}$ bayu.brahmantio@liu.se}
  \\ {\small  \it $^{\ddag}$  woodrow.hao.chi.kiang@liu.se; hello@hckiang.com}
}
\date{}
\begin{document}
\maketitle
\begin{abstract}
We present here a thorough study of the first two moments and cross--moments for the pure birth tree's height and
for the coalescent time of a randomly sampled pair of tips. We
consider also the first two moments of the conditional, on the tree, expectation of this coalescent time.
\\
MSC2020: 
60J80; 
92D10;
92D15; 
\\
Keywords: branching processes; coalescent times; moments; pure--birth tree; simulations;
\end{abstract}

\renewcommand{\thesection}{Y\arabic{section}}
\setcounter{section}{0}
\section{Introduction}
We collect here a number of results concerning the pure--birth tree. They provide us with
information concerning the moments and cross--moments, in particular the first two,
of the tree's height and time to coalescent of a random pair.
Some of these results can be found in our previous works \citep{KBarSSag2015aart,KBarSSag2015bart,KBar2018art,SSagKBar2012art},
though many could be certainly traced further back.
Other formul\ae, to the best of our knowledge, are presented here for the first time.
We put all of them together in systematic manner that will allow for easy use in further applications. Results concerning the moments of these times are not only interesting in themselves but can be used in the study
of phylogenetic tree indices, e.g., the total cophenetic index
\citep[][]{KBar2018art,AMirFRosLRot2013art,SSagKBar2012art}.

We denote the generalized harmonic numbers as
$$H_{n,r}=\sum_{i=1}^{n}i^{-r}$$ 
recalling that $H_{n,1}\sim \ln n$, and $H_{n,2}\to \pi^{2}/6$.
Next, we denote for $x>-1$, and $n \in \mathbb{N}$, the following function,
$$
b_{n,x} = \frac{1}{x+1}\cdot\frac{2}{2+x}\cdot \ldots \cdot \frac{n}{n+x} = \frac{\Gamma(n+1)\Gamma(x+1)}{\Gamma(n+x+1)},
$$
where $\Gamma(\cdot)$ is the gamma function. 
In the derivations, we reference some lemmata involving harmonic sums with an ``H.'' prefix, 
these refer to lemmata that were presented by \cite{KBar2023arXivH},
who collected a number of closed form formul\ae\ for harmonic and quadratic
harmonic sums.

\section{Random variables associated with heights of the pure--birth tree}\label{secYuleRVs}
\renewcommand{\thelemma}{\thesection.\arabic{lemma}}
\renewcommand{\thetheorem}{\thesection.\arabic{theorem}}
\renewcommand{\theremark}{\thesection.\arabic{remark}}
\renewcommand{\theexample}{\thesection.\arabic{example}}
\renewcommand{\thecorollary}{\thesection.\arabic{corollary}}
\setcounter{equation}{0}
\setcounter{lemma}{0}
\setcounter{theorem}{0}
\setcounter{example}{0}
\setcounter{remark}{0}
\setcounter{corollary}{0}

We consider a pure birth branching process with speciation rate $\lambda=1$ (changing this 
value is equivalent to rescaling time, hence it will not have any qualitative effect on the 
results presented here). For the purpose of this work, we call such a tree a Yule tree with rate $\lambda=1$. The realization of such a branching process is a tree structure. 
We condition the tree to be stopped just before the $n$--th speciation event,
i.e., there are $n+1$ pendant branches in the tree (a single origin branch and $n$ leading to 
tips present at the stopping time). The key property of this process is that time intervals between
speciation times are independent and the time between the $k$--th and $(k+1)$--st speciation
event is exponentially distributed with rate $k$ (as the minimum of $k$ exponential, with rate
$1$, random variables).
We denote by $U^{(n)}$ the (random) height of the Yule tree, and by
and $\tau^{(n)}$
the time to coalescent (backwards from today) of a random pair of tip species (see Fig. \ref{fig:phyltreeRV}). 
We also introduce the $\sigma$--field $\mathcal{Y}_{n}$, that contains information on the tree,
i.e., conditional on $\mathcal{Y}_{n}$ the tree, its topology and branch lengths are known.
By the already mentioned property we have $U^{(n)} = T_{1}+\ldots+T_{n}$, where $T_{i} \sim \exp(i)$ are independent.
This immediately gives us that 
$$ \E{U^{(n)} } =\E{\sum\limits_{i=1}^{n}T_{i}} = H_{n,1}~\mathrm{and}~\E{\sum\limits_{i=1}^{n}T_{i}^{2}} = 2H_{n,2}.$$ 
Let $\kappa_{n}\in \{1,\ldots,n-1\}$ be the random variable representing the speciation event at which 
a randomly selected pair of tips of the Yule tree coalesced (see Fig. \ref{fig:phyltreeRV}). We know (e.g., \citepos{KBarSSag2015aart} Lemma 1, or \citet{TreeSim1}, \citet{TreeSim2})
$$
P(\kappa_{n}=k) = \frac{2(n+1)}{n-1}\frac{1}{(k+1)(k+2)} =: \pi_{n,k}
$$
and now we can write
$$
\tau^{(n)} = T_{\kappa_n+1} + \ldots + T_{n}.
$$
We further notice that $\E{U^{(n)}\vert \mathcal{Y}_{n}}= U^{(n)}$, as conditional on 
$\mathcal{Y}_{n}$ all times, in particular $T_{1},\ldots,T_{n}$, are known. The same does
not hold for $\E{\tau^{(n)}\vert \mathcal{Y}_{n}}$, as the choice of the random pair 
of tips does not belong to $\mathcal{Y}_{n}$.

We also need to consider which speciation events are on a randomly sampled lineage. Denote, after \citet{KBar2018art}, $1_{k}^{(n)}$ as the indicator random variable that a randomly sampled pair of tips from a pure--birth tree on $n$ leaves coalesced at the $k$--th (counting from the origin of the tree, see Fig. \ref{fig:phyltreeRV}) speciation event; we recall that 
$\E{1_{k}^{(n)}}=\pi_{k,n}$. When we write the pair 
$1_{k,1}^{(n)}$, $1_{k,2}^{(n)}$ we will mean two independent copies of $1_{k}^{(n)}$, i.e., we sample a a pair of tips twice---and ask if both pairs coalesced at the $k$--th speciation event.

\begin{figure}
    \centering
    \includegraphics[width=0.8\textwidth]{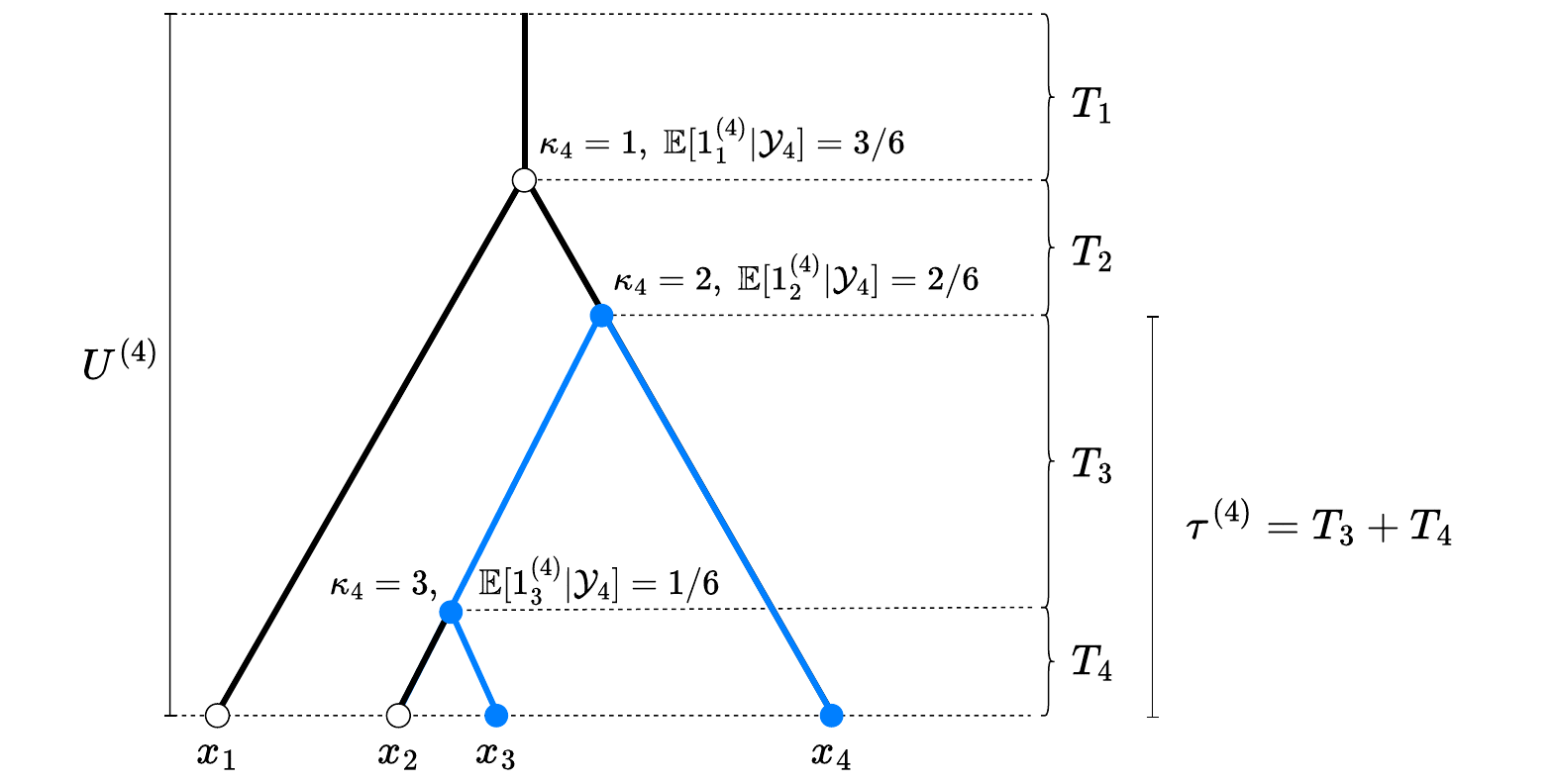}
    \caption{ A pure–birth tree, stopped just before the fifth speciation event, illustrating the various coalescent components we consider. We have four tips, $n=4$, and we ``randomly sample'' tips $x_{3}$ and $x_{4}$, and $\tau^{(4)}$ is their coalescent time. The interspeciation times, $T_{i}$s are distributed as $T_{i}\sim \exp(i)$ in our case. 
    The pairs $\{(x_{1},x_{2}),(x_{1},x_{3}),(x_{1},x_{4})\}$ 
    coalesced at $\kappa_{n}=1$; at $\kappa_{n}=2$
    the pairs $\{(x_{2},x_{4}),(x_{3},x_{4})\}$; and 
    at $\kappa_{n}=3$  only the pairs $\{(x_{2},x_{3})\}$.
}
    \label{fig:phyltreeRV}
\end{figure}

\section{Summary of results concerning moments of heights of the pure--birth tree}\label{secYuleMomentsSummary}
\renewcommand{\thelemma}{\thesection.\arabic{lemma}}
\renewcommand{\thetheorem}{\thesection.\arabic{theorem}}
\renewcommand{\theremark}{\thesection.\arabic{remark}}
\renewcommand{\theexample}{\thesection.\arabic{example}}
\renewcommand{\thecorollary}{\thesection.\arabic{corollary}}
\setcounter{equation}{0}
\setcounter{lemma}{0}
\setcounter{theorem}{0}
\setcounter{example}{0}
\setcounter{remark}{0}
\setcounter{corollary}{0}

In this section we collect a number of key first two moment properties of $U^{(n)}$, $\tau^{(n)}$, and
$\E{\tau^{(n)}\vert \mathcal{Y}_{n}}$. Some of them can be found in the literature, and some
we prove here. 
These, and related formul\ae\ are needed when proving properties of processes evolving on a pure--birth tree, central limit theorems
\citep{KBarSSag2015aart, KBar2020art,KBarTErh2021art}, the sum of branch lengths \citep{KBar2014art}, or the so--called cophenetic index 
\citep{SSagKBar2012art,KBar2018art}.
\begin{enumerate}
\item $\E{U^{(n)}}=H_{n,1}$ \citep[e.g.,][but this is essentially common knowledge]{SSagKBar2012art};
\item $U^{(n)}-\log n$ has a Gumbel limiting distribution \citep{KBarSSag2015aart};
\item $\E{U^{(n)^{2}}}=H_{n,1}^{2}+H_{n,2}$ (setting $k=0$ after Eq. $(8)$ on  p. $74$ in \citep{KBarSSag2015bart}, or Ex. \ref{exEUnm1m2}, but this is essentially common knowledge);
\item $\E{\tau^{(n)}}= \frac{n+1}{n-1}H_{n,1}-\frac{2n}{n-1}$ \citep{SSagKBar2012art};
\item $\E{U^{(n)}-\tau^{(n)}}=2(n-H_{n,1})/(n-1)$ \citep{SSagKBar2012art};
\item $\E{U^{(n)}(U^{(n)}-\tau^{(n)})}= \frac{2n}{n-1}\left(H_{n,1}+H_{n,2} -1-n^{-1}H_{n,1}^{2}\right)$ (Lemma \ref{lemUnUnmTaun});
\item $\E{U^{(n)}(U^{(n)}-\E{\tau^{(n)}\vert \mathcal{Y}_{n}})}=\frac{2n}{n-1}\left(H_{n,1}+H_{n,2} -1-n^{-1}H_{n,1}^{2}\right)$ 
(Lemma \ref{lemUnUnmETaunYn});
\item $\E{\tau^{(n)^{2}}}= \frac{n+1}{n-1}\left((H_{n,1}-2)^{2}+H_{n,2} \right)-\frac{4}{n-1}$ (Ex. \ref{exEtaunm1m2});
\item $\E{(U^{(n)}-\tau^{(n)})^{2}}= \frac{4n+2}{n-1}H_{n,2} -\frac{2H_{n,1}^{2}}{n-1}-\frac{4H_{n,1}}{n-1}$
(Appendix A in \citep{KBarSSag2015aart},  p. $35$ in \citep{KBar2018art} and Lemma \ref{lemUnmTaun2} for the final simplified form);
\item $\E{\E{\tau^{(n)}\vert \mathcal{Y}_{n}}^{2}} = 
H_{n,1}^{2}-4H_{n,1}- \frac{5H_{n,2}}{3} +7
+\frac{4H_{n,1}^{2}}{n}
+\Theta(n^{-1}\ln n)$ \\ (Lemma \ref{lemETaunYn2}); 
\item 
$
\E{(U^{(n)}-\E{\tau^{(n)}\vert \mathcal{Y}_{n}})^{2}} = 
\frac{4}{3}H_{n,2}+3-8n^{-1}H_{n,1}+\Theta(n^{-1})
\xrightarrow{n\to \infty} \frac{2\pi^{2}}{9}+3 ,
$ 
as $n\to\infty$ (Lemma \ref{lemUnmETaunYn2}); 
\item $\E{(\tau^{(n)}-\E{\tau^{(n)}\vert \mathcal{Y}_{n}})^{2}}
=\E{((U^{(n)}-\tau^{(n)})-(U^{(n)}-\E{\tau^{(n)}\vert \mathcal{Y}_{n}}))^{2}}
\\= 
\frac{8}{3}H_{n,2}-3-\frac{2H_{n,1}^{2}}{(n-1)}
+ \Theta(n^{-1}\ln n)
\xrightarrow{n\to \infty}
\frac{4\pi^{2}}{9} - 3
$
(Lemma \ref{lemtaunmETaunYn2}). 
\end{enumerate}

We can see that the shared path length, $U^{(n)}-\tau^{(n)}$, for a random pair of tip species, has mean value 
converging to $2$ and variance to $4(\pi^{2}/6-1)$.
This means that we expect that on average 
the shared path length between a pair of species is small compared the tree's height, which has expectation behaving as $\ln n$ and variance converging to
$\pi^{2}/6$.

As already mentioned, these heights are related to important tree statistics. For example the total cophenetic index (generalized to trees with branch lengths) will equal 
$\binom{n}{2}\left(U^{(n)}-\E{\tau^{(n)}\vert \mathcal{Y}_{n}}\right)$,
and in fact this was discussed and exploited by \citet{KBar2018art}.
Hence, e.g., from Lemma \ref{lemUnmETaunYn2} we can obtain the second moment, or combining Lemma \ref{lemUnmETaunYn2}
with $\E{U^{(n)}-\tau^{(n)}}$ easily gives the variance. 

\section{Moments of heights of the pure--birth tree}\label{secYuleMoments}
\renewcommand{\theequation}{\thesection.\arabic{equation}}
\renewcommand{\thelemma}{\thesection.\arabic{lemma}}
\renewcommand{\thetheorem}{\thesection.\arabic{theorem}}
\renewcommand{\theremark}{\thesection.\arabic{remark}}
\renewcommand{\theexample}{\thesection.\arabic{example}}
\renewcommand{\thecorollary}{\thesection.\arabic{corollary}}
\setcounter{equation}{0}
\setcounter{lemma}{0}
\setcounter{theorem}{0}
\setcounter{example}{0}
\setcounter{remark}{0}
\setcounter{corollary}{0}

\begin{theorem}[Appendix A in \cite{KBarSSag2015aart}]\label{thmMomkUn}
Define the set of integer valued vectors 
$$
\mathcal{A}_{m} = \{ \mathbf{k} : \sum\limits_{i=1}^{m}k_{i}i=m \},
$$
i.e., $\mathcal{A}_{m}$ is the set of all possible ways to represent $m$ as a sum of positive integers.
The $m$--th moment of the tree height of a Yule tree with speciation rate $1$ is
\be\label{eqEUnm}
\E{\left(U^{(n)}\right)^{m}} = \sum\limits_{\mathbf{k} \in \mathcal{A}_{m}} c_{\mathbf{k}}
\prod\limits_{i=1}^{m}H_{n,i}^{k_{i}},
\ee
where for a non--negative integer valued vector $\mathbf{k}=(k_{1},k_{2},\ldots)$, such that $m=\sum_{i=1}^{m}k_{i}i$ 
we define $c_{\mathbf{k}}$ recursively as
\be\label{eqck}
c_{\mathbf{k}} = c_{\mathbf{k},0}+\sum\limits_{j=1}^{m-1}(j(k_{j}+1))c_{\mathbf{k},j},
\ee
and
$$
\begin{array}{c}
c_{\mathbf{k},0}=c_{(k_{1}-1,k_{2},k_{3},\ldots)} \\
c_{\mathbf{k},j}=c_{(k_{1},\ldots,k_{j}+1,k_{j+1}-1,\ldots)},~j\ge 1.
\end{array}
$$
The boundary conditions for Eq. \eqref{eqck} are
\begin{enumerate}
\item $c_{\mathbf{k}}=0$ if all $k_{i}=0$ or one of the coordinates of $\mathbf{k}$ is negative,
\item $c_{\mathbf{k}}=1$ if $k_{1}\ge 1$ and all other $k_{i}=0$.
\end{enumerate}
\end{theorem}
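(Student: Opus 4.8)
The plan is to exploit that $U^{(n)}$ is a sum of \emph{independent} exponentials, so its cumulants simply add. First I would record that each $T_i\sim\exp(i)$ has cumulant generating function $-\log(1-s/i)$, whence the cumulant generating function of $U^{(n)}=\sum_{i=1}^{n}T_i$ is
\[
\sum_{i=1}^{n}-\log\!\left(1-\tfrac{s}{i}\right)=\sum_{r\ge 1}\frac{s^{r}}{r}\sum_{i=1}^{n}i^{-r}=\sum_{r\ge 1}\frac{H_{n,r}}{r}\,s^{r}.
\]
Reading off coefficients, the $r$-th cumulant of $U^{(n)}$ is $\kappa_r=(r-1)!\,H_{n,r}$. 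Equivalently, writing $\Phi(s)=\E{e^{sU^{(n)}}}$ and differentiating $\log\Phi$, one gets the moment recursion $M_m:=\E{\left(U^{(n)}\right)^{m}}=\sum_{r=1}^{m}\binom{m-1}{r-1}(r-1)!\,H_{n,r}\,M_{m-r}$ with $M_0=1$; this is the analytic engine of the argument.

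Next I would invoke the moment--cumulant relation, namely that the $m$-th raw moment is the complete Bell polynomial evaluated at the cumulants:
\[
\E{\left(U^{(n)}\right)^{m}}=\sum_{\mathbf{k}\in\mathcal{A}_m}\frac{m!}{\prod_{i}(i!)^{k_i}k_i!}\prod_i \kappa_i^{k_i}=\sum_{\mathbf{k}\in\mathcal{A}_m}\frac{m!}{\prod_i i^{k_i}k_i!}\prod_i H_{n,i}^{k_i},
\]
where in the last step I substitute $\kappa_i=(i-1)!\,H_{n,i}$ and use $((i-1)!)^{k_i}/(i!)^{k_i}=i^{-k_i}$. This already establishes \eqref{eqEUnm} with the explicit coefficient $c_{\mathbf{k}}=m!/\prod_i i^{k_i}k_i!$; what remains is to confirm that these numbers coincide with those generated by \eqref{eqck} and its boundary conditions.

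The final step is then purely combinatorial. I would observe that $m!/\prod_i i^{k_i}k_i!$ is exactly the number of permutations of $\{1,\dots,m\}$ of cycle type $\mathbf{k}$ (that is, with $k_i$ cycles of length $i$), and that \eqref{eqck} is precisely the recursion obtained by classifying such permutations according to the fate of the element $m$: either $m$ is a fixed point, whose removal leaves a permutation of type $(k_1-1,k_2,\dots)$ (the term $c_{\mathbf{k},0}$), or $m$ sits in a cycle of length $j+1$, whose deletion leaves a $j$-cycle and a permutation of type $(k_1,\dots,k_j+1,k_{j+1}-1,\dots)$; since $m$ may be inserted into any of the $k_j+1$ available $j$-cycles at any of $j$ positions, this contributes the weight $j(k_j+1)$. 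The two boundary conditions reflect that a nonempty set admits no cycle-free permutation (all $k_i=0$) and that a single all-fixed-points type is realised uniquely. For a self-contained check that avoids the permutation interpretation, one substitutes the closed form into \eqref{eqck}: the term $c_{\mathbf{k},0}$ equals $\tfrac{k_1}{m}c_{\mathbf{k}}$ and each term $j(k_j+1)c_{\mathbf{k},j}$ equals $\tfrac{(j+1)k_{j+1}}{m}c_{\mathbf{k}}$, so summing gives $\frac{c_{\mathbf{k}}}{m}\sum_{i=1}^{m} i\,k_i=c_{\mathbf{k}}$, using $\sum_i i k_i=m$.

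I expect the main obstacle to be bookkeeping rather than conceptual: pinning down the precise multiplicities $j(k_j+1)$ in \eqref{eqck} and keeping the index shifts in the vectors $(k_1,\dots,k_j+1,k_{j+1}-1,\dots)$ correct. Both the combinatorial and the algebraic routes above settle this, and since the recursion reduces a vector of total weight $m$ to vectors of weight $m-1$, it determines every $c_{\mathbf{k}}$ uniquely from the base cases; hence either verification identifies the coefficients and completes the proof.
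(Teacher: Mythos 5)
Your proof is correct, but it takes a genuinely different route from the paper. The paper works directly with the Laplace transform $\E{e^{-xU^{(n)}}}=b_{n,x}$: it differentiates $A(x)=b_{n,x}/n!$ $m$ times, proving by induction the derivative formula \eqref{eqAderiv} in which the coefficients $c_{\mathbf{k}}$ arise \emph{only} through the recursion \eqref{eqck}, and then evaluates at $x=0$. You instead exploit independence of the $T_i$ at the level of cumulants: additivity gives $\kappa_r=(r-1)!\,H_{n,r}$, and the classical moment--cumulant (complete Bell polynomial) identity immediately yields \eqref{eqEUnm} with the \emph{explicit} closed form $c_{\mathbf{k}}=m!/\prod_i i^{k_i}k_i!$, i.e., the number of permutations of $\{1,\dots,m\}$ with cycle type $\mathbf{k}$ --- a combinatorial interpretation the paper never states. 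Your verification that this closed form satisfies \eqref{eqck} is sound: the identities $c_{\mathbf{k},0}=\tfrac{k_1}{m}c_{\mathbf{k}}$ and $j(k_j+1)c_{\mathbf{k},j}=\tfrac{(j+1)k_{j+1}}{m}c_{\mathbf{k}}$ sum to $c_{\mathbf{k}}$ via $\sum_i ik_i=m$, and since the recursion lowers the weight from $m$ to $m-1$, uniqueness from the base cases closes the induction. Your approach is shorter and strictly more informative; what the paper's heavier route buys is reusability: the same $A$, $b_m$ differentiation machinery transfers verbatim to Theorem \ref{thmMomktaun}, where the Laplace transform of $\tau^{(n)}$ carries the extra factor $(x+1)/(x-1)$ and $\tau^{(n)}$ is a mixture over $\kappa_n$ rather than a sum of independent variables, so cumulants no longer add and your method does not carry over directly. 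One small point deserves care: your identity $c_{\mathbf{k},0}=\tfrac{k_1}{m}c_{\mathbf{k}}$ tacitly uses the natural convention that the empty vector has coefficient $1$, whereas the paper's boundary condition 1 assigns it $0$; this discrepancy surfaces only for $m=1$ and the pure-fixed-point vectors $(k_1,0,\dots)$, which are exactly the cases the paper's boundary condition 2 sets directly to $1$ (in agreement with your closed form, since $m!/(1^{k_1}k_1!)=1$ when $m=k_1$), so your appeal to uniqueness from the base cases does resolve it --- but it is worth stating explicitly that the algebraic check is invoked only for vectors of weight $m\ge 2$ with some $k_i>0$, $i\ge 2$.
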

\begin{proof}
The proof of this theorem can be found in 
Appendix A in \cite{KBarSSag2015aart}. For the convenience of the reader,
especially as its steps are helpful in proving the next Thm. \ref{thmMomktaun}, 
we provide here a full detailed proof of it
(let us mention that in \citepos{KBarSSag2015aart} Eq. (11), ours Eq. \eqref{eqck} a bracket around $k_{j}+1$ is not properly set). 
One can find in \citepos{KBarSSag2015aart} Lemma $3$ that the Laplace transform of the height of the Yule tree is
$$
\E{e^{-x U^{(n)}}} = b_{n,x}.
$$
We will use the property that for a random variable $Z$, with Laplace transform $\mathcal{L}_{Z}(x)$, its $m$--th moment is obtained
as
$$
\E{Z^{m}} = (-1)^{m} \frac{\ud^{m} \mathcal{L}_{Z}}{\ud x^{m}}\vert_{x=0}.
$$
We introduce the notation (for a fixed $n$)
$$
A(x) = b_{n,x}/n! = \frac{1}{x+1}\cdot \ldots \cdot \frac{1}{x+n},~~b_{m}(x) = \frac{1}{(x+1)^{m}}+ \ldots + \frac{1}{(x+n)^{m}}
$$
and also the vector $\mathbf{b}_{m}(x) = (b_{1}(x),\ldots,b_{m}(x))$. We 
can immediately notice that $A(0)=1/n!$ and $b_{m}(0)=H_{n,m}$. The first and second derivatives of $A(x)$ are
$A'(x) = -A(x)b_{1}(x)$ and $A''(x) = A(x)(b_{1}^{2}(x)+b_{2}(x))$, notice that $b'_{m}(x)=-mb_{m+1}(x)$.
We also introduce the multi--index notation
$\mathbf{b}_{m}(x)^{\mathbf{k}} = b_{1}(x)^{k_{1}}\cdot\ldots\cdot b_{m}(x)^{k_{m}}$. 
One then shows inductively
that 
\be\label{eqAderiv}
\frac{\ud^{m}}{\ud x^{m}}A(x) = 
(-1)^{m}A(x)\sum\limits_{\mathbf{k} \in \mathcal{A}_{m}} c_{\mathbf{k}}\mathbf{b}_{m}(x)^{\mathbf{k}}.
\ee
We can see directly that Eq. \eqref{eqAderiv} holds for $m=1$. Assume that it holds till $m$ and the $m+1$st derivative is
$$
\begin{array}{l}
\frac{\ud^{m+1}}{\ud x^{m}}A(x) = \left(\frac{\ud^{m}}{\ud x^{m}}A(x) \right)'
= 
\left( (-1)^{m}A(x)\sum\limits_{\mathbf{k} \in \mathcal{A}_{m}} c_{\mathbf{k}}\mathbf{b}_{m}(x)^{\mathbf{k}}\right)'
\\=
(-1)^{m}
\left(A'(x)\sum\limits_{\mathbf{k} \in \mathcal{A}_{m}} c_{\mathbf{k}}\mathbf{b}_{m}(x)^{\mathbf{k}}
+
A(x)\sum\limits_{\mathbf{k} \in \mathcal{A}_{m}} c_{\mathbf{k}}(\mathbf{b}_{m}(x)^{\mathbf{k}})'\right)
\\=
(-1)^{m+1}A(x)
\left(
\sum\limits_{\mathbf{k} \in \mathcal{A}_{m}} c_{\mathbf{k}}
\left(\mathbf{b}_{m}(x)^{(k_{1}+1,k_{2},\ldots)} +
\sum\limits_{i=1}^{m}  ik_{i}\mathbf{b}_{m+1}^{(k_{1},\ldots,k_{i}-1,k_{i+1}+1,\ldots)} \right)\right).
\end{array}
$$
As, 
$k_{1}+i(k_{i}-1)+(i+1)(k_{i+1}+1)+\ldots=k_{1}+ik_{i}+(i+1)k_{i+1}+\ldots+i+1-i=k_{1}+ik_{i}+(i+1)k_{i+1}+\ldots+1=m+1$,
for any given $\mathbf{k} \in \mathcal{A}_{m+1}$ 
we will obtain
in front of $\mathbf{b}_{m+1}(x)^{\mathbf{k}}$
the coefficient 
$$
c_{(k_{1}-1,k_{2},\ldots)} + \sum\limits_{i=1}^{m} c_{(k_{1},\ldots,k_{i}+1,k_{i+1}-1,\ldots)}i(k_{i}+1)
\equiv c_{\mathbf{k},0}+\sum\limits_{i=1}^{m} c_{\mathbf{k},i}i(k_{i}+1),
$$
as desired. 
Evaluating the derivative at $0$ we obtain
$$
\frac{\ud^{m}}{\ud x^{m}}A(x)\vert_{x=0} = 
(-1)^{m}A(0)\sum\limits_{\mathbf{k} \in \mathcal{A}_{m}} c_{\mathbf{k}}\mathbf{b}_{m}(0)^{\mathbf{k}}
=
(-1)^{m}\frac{1}{n!}\sum\limits_{\mathbf{k} \in \mathcal{A}_{m}} c_{\mathbf{k}}
\prod\limits_{i=1}^{m}H_{n,i}^{k_{i}}.
$$
To obtain the final statement recall that $\E{e^{-x U^{(n)}}}=b_{n,x}=n!A(x)$.
\end{proof}

\begin{example}\label{exEUnm1m2}
We recover here the first two moments, $m=1, 2$, for $U^{(n)}$. We first need to find $c_{\mathbf{k}}$ for 
$\mathbf{k} \in \mathcal{A}_{1}, \mathcal{A}_{2}$. The only $\mathbf{k}$ vectors
that need to be considered are $\mathbf{v}_{1}=(1,0,0,\ldots)$, $\mathbf{v}_{2}=(2,0,0,\ldots)$, $\mathbf{v}_{3}=(0,1,0,0\ldots)$.
We have directly from the boundary conditions that $c_{\mathbf{v}_{1}}=1$. We calculate for $\mathbf{v}_{2}$
$c_{\mathbf{v}_{2}}= c_{\mathbf{v}_{2},0}+1(2+1)c_{\mathbf{v}_{2},1}$,
$$
\begin{array}{l}
c_{\mathbf{v}_{2},0} = c_{(1,0,\ldots)}=1, \\
c_{\mathbf{v}_{2},1} = c_{(3,-1,0,\ldots)}=0,
\end{array}
$$
resulting in $c_{\mathbf{v}_{2}}=1$. For $\mathbf{v}_{3}$
$c_{\mathbf{v}_{3}}= c_{\mathbf{v}_{3},0}+1(0+1)c_{\mathbf{v}_{2},1}$,
$$
\begin{array}{l}
c_{\mathbf{v}_{3},0} = c_{(-1,1,\ldots)}=0, \\
c_{\mathbf{v}_{3},1} = c_{(1,0,0,\ldots)}=1,
\end{array}
$$
resulting in $c_{\mathbf{v}_{3}}=1$. From Eq. \eqref{eqEUnm} we have
$$
\begin{array}{l}
\E{U^{(n)}} = \sum\limits_{\mathbf{k} \in \mathcal{A}_{1}} c_{\mathbf{k}} \prod\limits_{i=1}^{1}H_{n,i}^{k_{i}}
= c_{\mathbf{v}_{1}} H_{n,1}^{1} =  H_{n,1}, \\
\E{U^{(n)^{2}}} = \sum\limits_{\mathbf{k} \in \mathcal{A}_{2}} c_{\mathbf{k}} \prod\limits_{i=1}^{2}H_{n,i}^{k_{i}}
= c_{\mathbf{v}_{2}} H_{n,1}^{2}H_{n,2}^{0} + c_{\mathbf{v}_{3}} H_{n,1}^{0}H_{n,2}^{1}  = H_{n,1}^{2}+H_{n,2}.
\end{array}
$$
\end{example}

\begin{theorem}[Appendix A \cite{KBarSSag2015aart}]\label{thmMomktaun}
The $m$--th moment of the time to coalescent of a random pair of tips in a Yule tree with speciation rate $1$,
with the same notation as in Thm. \ref{thmMomkUn}, is
\be\label{eqEtaunm}
\E{\left(\tau^{(n)}\right)^{m}} = 
(-1)^{m+1}\frac{2m!}{(n-1)}+\frac{n+1}{n-1}
\sum\limits_{\mathbf{k} \in \mathcal{A}_{m}} c_{\mathbf{k}}
\left(\prod\limits_{\begin{array}{c} i=1\\ i \mathrm{~even}\end{array}}^{m}H_{n,i}^{k_{i}}\right)
\left(\prod\limits_{\begin{array}{c} i=1\\ i \mathrm{~odd}\end{array}}^{m}(H_{n,i}-2)^{k_{i}}\right).
\ee
\end{theorem}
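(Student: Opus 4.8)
The plan is to follow the Laplace--transform strategy of Thm. \ref{thmMomkUn} and reduce the computation to a single modified product, to which the derivative identity \eqref{eqAderiv} then applies verbatim. First I would condition on $\kappa_n$: given $\kappa_n=k$ we have $\tau^{(n)}=T_{k+1}+\ldots+T_n$ with independent $T_i\sim\exp(i)$, so the conditional Laplace transform is $\prod_{i=k+1}^n i/(i+x)$. Writing $P_k(x)=\prod_{i=k+1}^n i/(i+x)$ and using $P(\kappa_n=k)=\pi_{n,k}$, this gives
$$\E{e^{-x\tau^{(n)}}}=\sum_{k=1}^{n-1}\pi_{n,k}\,P_k(x)=\frac{2(n+1)}{n-1}\sum_{k=1}^{n-1}\frac{P_k(x)}{(k+1)(k+2)}.$$

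Next I would evaluate this sum in closed form. The key elementary identity is $P_{k+1}(x)-P_k(x)=\tfrac{x}{k+1}P_k(x)$, equivalently $P_k(x)=\tfrac{k+1}{x}\bigl(P_{k+1}(x)-P_k(x)\bigr)$. Substituting this into the sum and applying Abel summation (summation by parts) produces a self--referential identity of the form $x\,S(x)=\text{(boundary terms)}+S(x)$, where $S(x)$ denotes the sum above; using $P_n(x)=1$ and $P_1(x)=(1+x)b_{n,x}$ this solves immediately to
$$\E{e^{-x\tau^{(n)}}}=\frac{2-(n+1)(x+1)b_{n,x}}{(n-1)(x-1)}.$$
The apparent pole at $x=1$ is removable, since the numerator vanishes there as $b_{n,1}=1/(n+1)$; in any case we only differentiate at $x=0$.

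Then I would recast the right--hand side in a form ready for repeated differentiation at $x=0$. Setting $\tilde A(x)=\tfrac{1}{x-1}\prod_{i=2}^n\tfrac{1}{x+i}$ --- the product of Thm. \ref{thmMomkUn} with the pole at $-1$ relocated to $+1$ --- one has $(x+1)b_{n,x}/(x-1)=n!\,\tilde A(x)$, hence
$$\E{e^{-x\tau^{(n)}}}=-\frac{(n+1)\,n!}{n-1}\,\tilde A(x)+\frac{2}{(n-1)(x-1)}.$$
The induction behind \eqref{eqAderiv} used only $A'/A=-b_1$ and $b_j'=-jb_{j+1}$, which hold verbatim for $\tilde A$ with $\tilde b_j(x)=(x-1)^{-j}+\sum_{i=2}^n(x+i)^{-j}$; thus $\tilde A^{(m)}(0)=(-1)^m\tilde A(0)\sum_{\mathbf{k}\in\mathcal{A}_m}c_{\mathbf{k}}\prod_i \tilde b_i(0)^{k_i}$ with $\tilde A(0)=-1/n!$ and, crucially, $\tilde b_j(0)=(-1)^j+(H_{n,j}-1)$, i.e. $H_{n,j}-2$ for odd $j$ and $H_{n,j}$ for even $j$. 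Differentiating the simple pole gives $\tfrac{d^m}{dx^m}\tfrac{2}{(n-1)(x-1)}\big|_{x=0}=-2m!/(n-1)$. Assembling everything through $\E{(\tau^{(n)})^m}=(-1)^m\,\tfrac{d^m}{dx^m}\E{e^{-x\tau^{(n)}}}\big|_{x=0}$ then yields \eqref{eqEtaunm}, the two factors of $(-1)^m$ cancelling on the main term and the pole supplying the $(-1)^{m+1}2m!/(n-1)$ correction.

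The hard part is the closed--form evaluation of the conditional--Laplace--transform sum --- the telescoping/self--referential Abel step --- together with correctly pinning down $\tilde A$ so that its derivatives reproduce the parity split $H_{n,j}-2$ versus $H_{n,j}$. Everything downstream is a near--verbatim replay of Thm. \ref{thmMomkUn}, the relocated pole accounting for the odd--index shift and the elementary pole $1/(x-1)$ supplying the additive correction.
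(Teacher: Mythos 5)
Your proof is correct, and it reaches the paper's result along the same backbone---the Laplace transform of $\tau^{(n)}$ plus the derivative recursion \eqref{eqAderiv} from Thm.~\ref{thmMomkUn} applied to a modified product---but it deviates from the paper's proof in two genuine and worthwhile ways. First, the paper simply cites \citepos{KBarSSag2015aart} Lemma~3 for Eq.~\eqref{eqLaptaun}, whereas you derive it from scratch: conditioning on $\kappa_{n}$ (implicitly using the standard independence of the topology from the $T_{i}$'s, which the paper also uses elsewhere), then exploiting $P_{k+1}(x)-P_{k}(x)=\frac{x}{k+1}P_{k}(x)$ and summation by parts to get the self--referential identity $xS(x)=\frac{1}{n+1}-\frac{1}{2}P_{1}(x)+S(x)$, which with $P_{n}=1$ and $P_{1}(x)=(1+x)b_{n,x}$ solves to exactly \eqref{eqLaptaun}; I checked the boundary bookkeeping ($\frac{1}{3}+\frac{1}{6}=\frac{1}{2}$) and it is right, so your proof is self--contained where the paper's is not. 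Second, your relocated--pole formulation is a real streamlining of the paper's $\hat{A}$ calculus: writing $\tilde{A}(x)=\frac{1}{x-1}\prod_{i=2}^{n}\frac{1}{x+i}$ makes $\tilde{b}_{j}(x)=(x-1)^{-j}+\sum_{i=2}^{n}(x+i)^{-j}$, so that $\tilde{A}'=-\tilde{b}_{1}\tilde{A}$ and $\tilde{b}_{j}'=-j\,\tilde{b}_{j+1}$ hold \emph{termwise}, bypassing the paper's separate induction for the partial--fraction identity $\sum_{i=1}^{m}\frac{1}{(x+1)^{i}(x-1)^{m+1-i}}=\frac{1}{2}\bigl(\frac{1}{(x-1)^{m}}-\frac{1}{(x+1)^{m}}\bigr)$; note your $\tilde{b}_{m}$ coincides with the paper's $\hat{b}_{m}=b_{m}+\bigl(\frac{1}{(x-1)^{m}}-\frac{1}{(x+1)^{m}}\bigr)$ since the $i=1$ term cancels, and your cleaner derivative relation also sidesteps the index slip in the paper's displayed $\hat{b}_{m+1}'(x)=-m\hat{b}_{m+1}(x)$. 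Your endgame matches the paper exactly: $\tilde{A}(0)=-1/n!$, $\tilde{b}_{j}(0)=H_{n,j}+((-1)^{j}-1)$ giving the even/odd split, the pole contributing $-2m!/(n-1)$ before the final factor $(-1)^{m}$, so all signs assemble to \eqref{eqEtaunm}. What the paper's route buys is brevity (by outsourcing the transform) and an explicit record of the partial--fraction identity; what yours buys is self--containment and a more transparent reason why the odd harmonic numbers get shifted by $2$---the single factor whose pole sits at $+1$ rather than $-1$.
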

\begin{proof}
For the convenience of the reader we provide a full detailed proof here (\citet{KBarSSag2015aart}
considered the joint moment $\E{(U^{(n)}-\tau^{(n)})^{m}\left(\tau^{(n)}\right)^{r}}$, and 
we also notify that in  \citepos{KBarSSag2015aart} Eq. \eqref{eqEtaunm}, e.g.,  the factor $(-1)^{m+1}$ is missing, what may confuse a bit).
The proof will come along the lines of the proof of Thm. \ref{thmMomkUn}. We first recall that the Laplace transform
for $\tau^{(n)}$ is (\citepos{KBarSSag2015aart} Lemma $3$)
\be\label{eqLaptaun}
\E{e^{-x\tau^{(n)}}} =  \frac{2-(n+1)(x+1)b_{n,x}}{(n-1)(x-1)}= \frac{2}{(n-1)}\frac{1}{(x-1)}-\left(\frac{n+1}{n-1}\right)\frac{(x+1)}{(x-1)}b_{n,x}.
\ee
We are interested in the behaviour around $0$ so the singularity at $x=1$ is not relevant here. 
The $m$--th derivative of $(x-1)^{-1}$ is 
$$
\left(\frac{1}{x-1}\right)^{(m)} = (-1)^{m}\frac{m!}{(x-1)^{m+1}}.
$$
One needs to now consider the $m$--th derivative of $(x+1)b_{n,x}/(x-1)$. Define now $\hat{A}(x)=(x+1)A(x)/(x-1)$ and 
$$\hat{b}_{1}(x)=b_{1}(x)+2\frac{1}{(x+1)(x-1)}.$$
The $m$--th derivative of $1/((x-1)(x+1))$ is 
\be\label{eqderivpart2}
\left(\frac{1}{(x-1)(x+1)}\right)^{(m)} = (-1)^{m}m!\sum\limits_{i=1}^{m+1}\frac{1}{(x+1)^{i}(x-1)^{m+2-i}}.
\ee
Defining generally,
$$
\begin{array}{l}
\hat{b}_{m}(x)=b_{m}(x)+2\sum\limits_{i=1}^{m}\frac{1}{(x+1)^{i}(x-1)^{m+1-i}}
=
b_{m}(x)+\left(\frac{1}{(x-1)^{m}}-\frac{1}{(x+1)^{m}}\right).
\end{array}
$$
The second component,
$$
\sum\limits_{i=1}^{m}\frac{1}{(x+1)^{i}(x-1)^{m+1-i}}=\frac{1}{2}\left(\frac{1}{(x-1)^{m}}-\frac{1}{(x+1)^{m}}\right),
$$
can be checked by induction on $m$. 
Hence, we can see that $\hat{b}_{m+1}'(x)=-m\hat{b}_{m+1}(x)$.
Now we turn to calculating the $m$--th derivative of $(\hat{A}(x))^{(m)}$.
We have 
$$
\begin{array}{l}
\hat{A}'(x)=
\left(\frac{x+1}{x-1} \right)'A(x) + \frac{x+1}{x-1}A'(x)
=
(-1)\left(2\frac{1}{(x-1)(x+1)} + b_{1}(x)\right)\hat{A}(x)
=
(-1)\hat{A}(x)\hat{b}_{1}(x).
\end{array}
$$
Hence, as $\hat{A}'(x) = (-1)\hat{b}_{1}(x)\hat{A}(x)$ and $\hat{b}_{m+1}'(x)=-m\hat{b}_{m+1}(x)$
we have the same situation as in Thm. \ref{thmMomkUn}. Therefore, we may directly write
\be\label{eqAhatderiv}
\frac{\ud^{m}}{\ud x^{m}}\hat{A}(x) = 
(-1)^{m}\hat{A}(x)\sum\limits_{\mathbf{k} \in \mathcal{A}_{m}} c_{\mathbf{k}}\mathbf{\hat{b}}_{m}(x)^{\mathbf{k}}.
\ee
Now, we obtain that the
$m$--th derivative of the Laplace transform of $\tau^{(n)}$ will be 
$$
\begin{array}{l}
\left(\E{e^{-x\tau^{(n)}}}\right)^{(m)} =  
\frac{2}{(n-1)}\left(\frac{1}{(x-1)}\right)^{(m)}-\left(\frac{n+1}{n-1}\right)\left(\frac{(x+1)}{(x-1)}b_{n,x}\right)^{(m)}
\\=
\frac{2}{(n-1)}\left(\frac{1}{(x-1)}\right)^{(m)}-\left(\frac{n+1}{n-1}\right)n!\hat{A}(x)^{(m)}
\\ =
(-1)^{m}\left(
\frac{2}{(n-1)}\frac{m!}{(x-1)^{m+1}} - \frac{(n+1)!}{n-1}\hat{A}(x)\sum\limits_{\mathbf{k} \in \mathcal{A}_{m}} c_{\mathbf{k}}\mathbf{\hat{b}}_{m}(x)^{\mathbf{k}}
\right).
\end{array}
$$
To evaluate the derivative at $0$ we notice that 
$\hat{A}(0)=-A(0)=1/n!$ and 
$$
\begin{array}{l}
\hat{b}_{m}(0)=b_{m}(0)+((-1)^{m}-1)
=H_{n,m}+((-1)^{m}-1)
\\ =\left\{
\begin{array}{lc}
H_{n,m} & m\mathrm{~is~even}, \\
H_{n,m}-2 & m\mathrm{~is~odd}. \\
\end{array}\right.
\end{array}
$$
Therefore, 
$$
\E{\tau^{(n)^{m}}} = (-1)^{m+1}\frac{2m!}{(n-1)}+\frac{n+1}{n-1}
\sum\limits_{\mathbf{k} \in \mathcal{A}_{m}} c_{\mathbf{k}}
\left(\prod\limits_{\begin{array}{c} i=1\\ i \mathrm{~even}\end{array}}^{m}H_{n,i}^{k_{i}}\right)
\left(\prod\limits_{\begin{array}{c} i=1\\ i \mathrm{~odd}\end{array}}^{m}(H_{n,i}-2)^{k_{i}}\right).
$$
\end{proof}
\begin{example}\label{exEtaunm1m2}
We recover here the first two moments $m=1, 2$, for $\tau^{(n)}$. We have the same $c_{\mathbf{v}_{1}}$, 
$c_{\mathbf{v}_{2}}$ and $c_{\mathbf{v}_{3}}$ as in Example \ref{exEUnm1m2}.
From Eq. \eqref{eqEtaunm} we have 
$$
\begin{array}{l}
\E{\tau^{(n)}} = 
(-1)^{1+1}\frac{2}{(n-1)}1!+\frac{n+1}{n-1}
\sum\limits_{\mathbf{k} \in \mathcal{A}_{1}} c_{\mathbf{k}}
\left(\prod\limits_{\begin{array}{c} i=1\\ i \mathrm{~even}\end{array}}^{1}H_{n,i}^{k_{i}}\right)
\left(\prod\limits_{\begin{array}{c} i=1\\ i \mathrm{~odd}\end{array}}^{1}(H_{n,i}-2)^{k_{i}}\right) 
\\=
\frac{2}{n-1}+\frac{n+1}{n-1} c_{\mathbf{v}_{1}}(H_{n,i}-2)^{1}
=\frac{2}{n-1}+\frac{n+1}{n-1} (H_{n,i}-2),
\\
\E{\tau^{(n)^{2}}} = 
(-1)^{2+1}\frac{2}{(n-1)}2!+\frac{n+1}{n-1}
\sum\limits_{\mathbf{k} \in \mathcal{A}_{2}} c_{\mathbf{k}}
\left(\prod\limits_{\begin{array}{c} i=1\\ i \mathrm{~even}\end{array}}^{2}H_{n,i}^{k_{i}}\right)
\left(\prod\limits_{\begin{array}{c} i=1\\ i \mathrm{~odd}\end{array}}^{2}(H_{n,i}-2)^{k_{i}}\right)
\\ =
\frac{(-4)}{(n-1)}+\frac{n+1}{n-1}\left(
c_{\mathbf{v}_{2}}H_{n,2}^{0}(H_{n,1}-2)^{2}
+c_{\mathbf{v}_{3}}H_{n,2}^{1}(H_{n,1}-2)^{0}
\right)
\\ =
\frac{n+1}{n-1}\left((H_{n,1}-2)^{2}+H_{n,2}\right)-\frac{4}{(n-1)}.
\end{array}
$$
\end{example}

\begin{lemma}[See also the proof of Thm. $5.1$ in \citep{KBar2018art}]\label{lemUnmTaun2}
For a Yule tree with speciation rate $\lambda=1$ we have
\be\label{eqUnmTaun2}
\E{(U^{(n)}-\tau^{(n)})^{2}}= \frac{4n+2}{n-1}H_{n,2} -\frac{2H_{n,1}^{2}}{n-1}-\frac{4H_{n,1}}{n-1}
\ee
\end{lemma}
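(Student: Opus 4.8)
The plan is to condition on the coalescent event $\kappa_{n}$ and reduce everything to the already-computed second moment of a tree height. First I would invoke the decomposition recorded in Section \ref{secYuleRVs}: since $\tau^{(n)}=T_{\kappa_{n}+1}+\ldots+T_{n}$, the shared path length is $U^{(n)}-\tau^{(n)}=T_{1}+\ldots+T_{\kappa_{n}}$. Because the ranked topology together with the independent sampling of the random pair determines $\kappa_{n}$ and is independent of the interspeciation times $T_{1},\ldots,T_{n}$, conditioning on $\kappa_{n}=k$ leaves the $T_{i}$ with their original laws. Hence $\sum_{i=1}^{k}T_{i}$ is distributed exactly as the height $U^{(k)}$, so that
$$
\E{(U^{(n)}-\tau^{(n)})^{2}\mid \kappa_{n}=k}=\E{(U^{(k)})^{2}}=H_{k,1}^{2}+H_{k,2},
$$
the last equality being Example \ref{exEUnm1m2}.

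Next I would apply the law of total expectation with $P(\kappa_{n}=k)=\pi_{n,k}=\frac{2(n+1)}{n-1}\frac{1}{(k+1)(k+2)}$, which gives
$$
\E{(U^{(n)}-\tau^{(n)})^{2}} = \frac{2(n+1)}{n-1}\sum\limits_{k=1}^{n-1}\frac{H_{k,1}^{2}+H_{k,2}}{(k+1)(k+2)}.
$$
Comparing with the target Eq. \eqref{eqUnmTaun2}, it then suffices to establish the single closed form
$$
\sum\limits_{k=1}^{n-1}\frac{H_{k,1}^{2}+H_{k,2}}{(k+1)(k+2)} = \frac{(2n+1)H_{n,2}-H_{n,1}^{2}-2H_{n,1}}{n+1},
$$
since multiplying the right-hand side by $2(n+1)/(n-1)$ reproduces $\frac{4n+2}{n-1}H_{n,2}-\frac{2H_{n,1}^{2}}{n-1}-\frac{4H_{n,1}}{n-1}$ exactly.

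The main obstacle is this final, purely arithmetic, evaluation of the weighted quadratic harmonic sum. Here I would exploit that the particular combination $g_{k}=H_{k,1}^{2}+H_{k,2}$ has a very clean forward difference, $g_{k}-g_{k-1}=2H_{k,1}/k$, where the quadratic and the $H_{k,2}$ pieces conspire to collapse into a single linear term; together with the partial fraction $\frac{1}{(k+1)(k+2)}=\frac{1}{k+1}-\frac{1}{k+2}$, whose partial sums telescope to $\frac12-\frac{1}{k+2}$, summation by parts converts the sum into boundary terms plus weighted linear harmonic sums of the types $\sum_{k}H_{k,1}/k$ (for which $\sum_{k=1}^{m}H_{k,1}/k=\tfrac12(H_{m,1}^{2}+H_{m,2})$) and $\sum_{k}H_{k,1}/(k(k+2))$. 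The closed forms for these are catalogued among the ``H.'' lemmata of \cite{KBar2023arXivH}, so no genuinely new identity should be needed. I expect the fiddliest part to be the bookkeeping of summation limits and the conversions between $H_{n-1,r}$ and $H_{n,r}$; a check at $n=2$, where $\kappa_{2}\equiv 1$ forces $\E{(U^{(2)}-\tau^{(2)})^{2}}=\E{(U^{(1)})^{2}}=2$ in agreement with both sides of Eq. \eqref{eqUnmTaun2}, guards against index-shift slips.
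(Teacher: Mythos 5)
Your proposal is correct and follows essentially the same route as the paper's own direct proof in \ref{appAltProof}: you condition on $\kappa_{n}$ with weights $\pi_{n,k}$, use the independence of topology and branch lengths to identify $\E{(U^{(n)}-\tau^{(n)})^{2}\mid \kappa_{n}=k}$ with $\E{U^{(k)^{2}}}=H_{k,1}^{2}+H_{k,2}$, and reduce everything to the weighted sum $\frac{2(n+1)}{n-1}\sum_{k=1}^{n-1}\frac{H_{k,1}^{2}+H_{k,2}}{(k+1)(k+2)}$, which is exactly the expression the paper evaluates (via the catalogued harmonic-sum Lemmata \ref{HarXiv-lemSumHi2i1i2} and \ref{HarXiv-lemSumHisqi1i2app}). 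Your summation-by-parts evaluation of that sum, based on the correct difference $g_{k}-g_{k-1}=2H_{k,1}/k$ for $g_{k}=H_{k,1}^{2}+H_{k,2}$, is sound and your claimed closed form is indeed equivalent to Eq. \eqref{eqUnmTaun2}, so the only deviation from the paper is harmless bookkeeping in the final arithmetic step.
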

\begin{proof}
We will derive the second moment 
from the general joint moments formula,
with an alternative proof in \ref{appAltProof}. 
Using the same notation as in Thm. \ref{thmMomkUn}, we first recall from 
\citepos{KBarSSag2015aart} Appendix A
the general joint moment formula (setting $r=0$)
\be\label{eqEUnmtaunmGen}
\E{(U^{(n)}-\tau^{(n)})^{m}} = (-1)^{m}\frac{2(n+1)}{(n-1)}\sum\limits_{j=1}^{n-1}\frac{1}{(j+1)(j+2)}\left(\sum\limits_{\mathbf{k} \in \mathcal{A}_{m}}c_{\mathbf{k}}\prod\limits_{i=1}^{m}H_{j,i}^{k_{i}} \right).
\ee
Setting $m=2$, recalling Example \ref{exEUnm1m2}, and using 
Lemmata \ref{HarXiv-lemSumHi2i1i2} and \ref{HarXiv-lemSumHisqi1i2app} we may write

$$
\begin{array}{l}
\E{(U^{(n)}-\tau^{(n)})^{2}} = \frac{2(n+1)}{(n-1)}\sum\limits_{j=1}^{n-1}\frac{1}{(j+1)(j+2)}\left(\sum\limits_{\mathbf{k} \in \mathcal{A}_{2}}c_{\mathbf{k}}\prod\limits_{i=1}^{2}H_{j,i}^{k_{i}} \right)
\\ =\frac{2(n+1)}{(n-1)}\sum\limits_{j=1}^{n-1}\frac{1}{(j+1)(j+2)}
\left(c_{\mathbf{v}_{2}} H_{j,1}^{2}H_{j,2}^{0} + c_{\mathbf{v}_{3}} H_{j,1}^{0}H_{j,2}^{1}\right)
 =\frac{2(n+1)}{(n-1)}
\left(\sum\limits_{j=1}^{n-1}\frac{H_{j,1}^{2}}{(j+1)(j+2)}+\sum\limits_{j=1}^{n-1}\frac{H_{j,2}}{(j+1)(j+2)}
\right)
\\
\stackrel{\mathrm{Lemmata~\ref{HarXiv-lemSumHi2i1i2}, \ref{HarXiv-lemSumHisqi1i2app}}}{=}\frac{2(n+1)}{(n-1)}
\left(
H_{n,2}
+\frac{(n-1)^{2}+n-1-1}{n^{2}}
- 2\frac{H_{n-1,1}}{n} -\frac{H_{n-1,1}^{2}}{n+1} 
+ \frac{n}{n+1}\left(H_{n,2}-1\right)
\right)
\\ =
\frac{4n+2}{n-1}H_{n,2}-\frac{4H_{n,1}}{n-1}-\frac{2H_{n,1}^{2}}{n-1}.
\end{array}
$$
This equation is compared with simulations in Fig. \ref{fig:fig1Lem1_6}.
\end{proof}

\begin{lemma}\label{lemUnTaun}
For a Yule tree with speciation rate $\lambda=1$ we have 
\be\label{eqUnTaun}
\E{U^{(n)}\tau^{(n)}}= \frac{n+1}{n-1}\left(H_{n,1}^{2}-H_{n,2} \right)
-\frac{2n}{n-1}\left(H_{n,1} -1\right).
\ee
\end{lemma}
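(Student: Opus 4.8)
The plan is to avoid introducing any new summation and to obtain this mixed moment purely from the three second moments already established. The key observation is the polarization identity, which holds pointwise for every realization of the tree and the sampled pair,
\[
U^{(n)}\tau^{(n)} = \tfrac{1}{2}\left( \left(U^{(n)}\right)^{2} + \left(\tau^{(n)}\right)^{2} - \left(U^{(n)}-\tau^{(n)}\right)^{2}\right).
\]
Taking expectations and using linearity reduces the claim to combining $\E{(U^{(n)})^{2}}$ from Example \ref{exEUnm1m2}, $\E{(\tau^{(n)})^{2}}$ from Example \ref{exEtaunm1m2}, and $\E{(U^{(n)}-\tau^{(n)})^{2}}$ from Lemma \ref{lemUnmTaun2}. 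All three are stated earlier in the excerpt, so no further probabilistic input is needed.

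Concretely, I would substitute
\[
\E{(U^{(n)})^{2}} = H_{n,1}^{2}+H_{n,2},\qquad
\E{(\tau^{(n)})^{2}} = \frac{n+1}{n-1}\left((H_{n,1}-2)^{2}+H_{n,2}\right)-\frac{4}{n-1},
\]
together with $\E{(U^{(n)}-\tau^{(n)})^{2}} = \frac{4n+2}{n-1}H_{n,2}-\frac{2H_{n,1}^{2}}{n-1}-\frac{4H_{n,1}}{n-1}$ into the identity, expand $(H_{n,1}-2)^{2}=H_{n,1}^{2}-4H_{n,1}+4$, and collect everything over the common denominator $n-1$. Grouping the coefficients of $H_{n,1}^{2}$, of $H_{n,2}$, of $H_{n,1}$, and the constant separately, I expect the $H_{n,1}^{2}$ and $H_{n,2}$ terms to carry coefficients $+2(n+1)$ and $-2(n+1)$ respectively, the linear term to collapse to $-4nH_{n,1}$, and the constant to be $4n$. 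Dividing by $2$ then yields exactly $\frac{n+1}{n-1}(H_{n,1}^{2}-H_{n,2})-\frac{2n}{n-1}(H_{n,1}-1)$, as required.

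Since every ingredient is already proved earlier in the excerpt, there is no genuine analytic difficulty here; the main thing to watch is the sign and denominator bookkeeping on the $\tfrac{n+1}{n-1}$ and $\tfrac{4n+2}{n-1}$ prefactors when they are brought over the common denominator, and in particular the cancellation $4(n+1)-4=4n$ producing the constant term. As a cross-check, and an alternative first-principles route, one could instead condition on $\kappa_{n}$: using independence of the topology-determined index $\kappa_{n}$ from the branch lengths $T_{i}\sim\exp(i)$, one writes $\E{U^{(n)}\tau^{(n)}\mid \kappa_{n}=k}=H_{k,1}(H_{n,1}-H_{k,1})+(H_{n,1}-H_{k,1})^{2}+(H_{n,2}-H_{k,2})$ and then averages against $\pi_{n,k}$. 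That route reproduces the same answer but requires the harmonic-sum lemmata to evaluate the resulting $\sum_{k}\pi_{n,k}(\cdots)$, so the polarization identity is decidedly the cleaner path.
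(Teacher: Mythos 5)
Your proposal is correct and is essentially identical to the paper's own proof, which likewise applies the polarization identity $\E{U^{(n)}\tau^{(n)}}=\tfrac{1}{2}\left(\E{U^{(n)^{2}}}+\E{\tau^{(n)^{2}}}-\E{(U^{(n)}-\tau^{(n)})^{2}}\right)$ and substitutes Examples \ref{exEUnm1m2}, \ref{exEtaunm1m2}, and Lemma \ref{lemUnmTaun2}. Your coefficient bookkeeping ($+2(n+1)$ for $H_{n,1}^{2}$, $-2(n+1)$ for $H_{n,2}$, $-4n$ for $H_{n,1}$, and the constant $4n$ over the common denominator $2(n-1)$) checks out exactly.
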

\begin{proof}
We can calculate the cross--moment using Examples \ref{exEUnm1m2}, \ref{exEtaunm1m2}, and Lemma \ref{lemUnmTaun2}. 
We have

$$
\begin{array}{l}
\E{U^{(n)}\tau^{(n)}}= \frac{1}{2}\left(\E{U^{(n)^{2}}}+\E{\tau^{(n)^{2}}}-\E{(U^{(n)}-\tau^{(n)})^{2}} \right)
\\=
\frac{1}{2}\left(
H_{n,1}^{2}+H_{n,2} + \frac{n+1}{n-1}\left((H_{n,1}-2)^{2}+H_{n,2}\right)-\frac{4}{(n-1)}
-\frac{4n+2}{n-1}H_{n,2}+\frac{4H_{n,1}}{n-1}+\frac{2H_{n,1}^{2}}{n-1}
\right)
\\=
\frac{n+1}{n-1}\left(H_{n,1}^{2}-H_{n,2} \right)
-\frac{2n}{n-1}\left(H_{n,1} -1\right).
\end{array}
$$
This equation is compared with simulations in Fig. \ref{fig:fig1Lem1_6}.
\end{proof}

\begin{lemma}\label{lemUnUnmTaun}
For a Yule tree with speciation rate $\lambda=1$ we have 
\be\label{eqUnUnmTaun}
\E{U^{(n)}(U^{(n)}-\tau^{(n)})}= \frac{2n}{n-1}\left(H_{n,1}+H_{n,2} -1-n^{-1}H_{n,1}^{2}\right).
\ee
\end{lemma}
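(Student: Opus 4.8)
The plan is to reduce everything to quantities already computed in the excerpt, since $U^{(n)}(U^{(n)}-\tau^{(n)})$ expands as a difference of a second moment and a cross--moment. Concretely, by linearity of expectation I would first write
\be\label{eqplanUnUnmTaunsplit}
\E{U^{(n)}(U^{(n)}-\tau^{(n)})} = \E{U^{(n)^{2}}} - \E{U^{(n)}\tau^{(n)}},
\ee
so that no new probabilistic input is needed: the left factor $U^{(n)}$ is just distributed over the two terms of $U^{(n)}-\tau^{(n)}$.

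Next I would substitute the two known closed forms. From Example \ref{exEUnm1m2} we have $\E{U^{(n)^{2}}}=H_{n,1}^{2}+H_{n,2}$, and from Lemma \ref{lemUnTaun} we have $\E{U^{(n)}\tau^{(n)}}= \frac{n+1}{n-1}\left(H_{n,1}^{2}-H_{n,2}\right)-\frac{2n}{n-1}\left(H_{n,1}-1\right)$. Inserting both into \eqref{eqplanUnUnmTaunsplit} gives a single expression in $H_{n,1}^{2}$, $H_{n,2}$, $H_{n,1}$, and a constant, all to be placed over the common denominator $n-1$.

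The remaining work is purely algebraic: collect the coefficient of each of the four building blocks. The coefficient of $H_{n,1}^{2}$ becomes $1-\frac{n+1}{n-1}=-\frac{2}{n-1}$; the coefficient of $H_{n,2}$ becomes $1+\frac{n+1}{n-1}=\frac{2n}{n-1}$; the term $-\frac{2n}{n-1}(H_{n,1}-1)$ is carried over with its sign flipped (because of the minus sign in \eqref{eqplanUnUnmTaunsplit}), contributing $+\frac{2n}{n-1}H_{n,1}-\frac{2n}{n-1}$. Factoring $\frac{2n}{n-1}$ out of the collected expression and rewriting $-\frac{2}{n-1}H_{n,1}^{2}=\frac{2n}{n-1}\cdot(-n^{-1}H_{n,1}^{2})$ then yields exactly $\frac{2n}{n-1}\left(H_{n,1}+H_{n,2}-1-n^{-1}H_{n,1}^{2}\right)$, which is \eqref{eqUnUnmTaun}.

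There is essentially no conceptual obstacle here; the only thing to watch is bookkeeping of signs when distributing the subtraction in \eqref{eqplanUnUnmTaunsplit} and matching the factored target form, in particular recognizing that the stray $-\frac{2}{n-1}H_{n,1}^{2}$ term is precisely the $-n^{-1}H_{n,1}^{2}$ piece inside the $\frac{2n}{n-1}$ bracket. If one prefers an independent check, the same result also follows by combining $\E{U^{(n)}\tau^{(n)}}$ with Lemma \ref{lemUnmTaun2} via $\E{U^{(n)}(U^{(n)}-\tau^{(n)})}=\E{U^{(n)}\tau^{(n)}}+\E{(U^{(n)}-\tau^{(n)})^{2}}$, but the route through \eqref{eqplanUnUnmTaunsplit} is the shortest.
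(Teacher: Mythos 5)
Your main argument is exactly the paper's proof: the paper likewise writes $\E{U^{(n)}(U^{(n)}-\tau^{(n)})}=\E{U^{(n)^{2}}}-\E{U^{(n)}\tau^{(n)}}$, substitutes Example \ref{exEUnm1m2} and Lemma \ref{lemUnTaun}, and collects coefficients, and your bookkeeping ($-\tfrac{2}{n-1}$ on $H_{n,1}^{2}$, $\tfrac{2n}{n-1}$ on $H_{n,2}$, sign flip on the $H_{n,1}$ terms) is correct. One blemish in your closing aside: the proposed ``independent check'' $\E{U^{(n)}(U^{(n)}-\tau^{(n)})}=\E{U^{(n)}\tau^{(n)}}+\E{(U^{(n)}-\tau^{(n)})^{2}}$ is not an identity, since $(U^{(n)}-\tau^{(n)})^{2}+U^{(n)}\tau^{(n)}=U^{(n)}(U^{(n)}-\tau^{(n)})+\tau^{(n)^{2}}$; the correct version needs an extra $-\E{\tau^{(n)^{2}}}$ (available from Example \ref{exEtaunm1m2}), and as written that route would be off by exactly that second moment.
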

\begin{proof}
Using Lemma \ref{lemUnTaun} and Example \ref{exEUnm1m2}

$$
\begin{array}{l}
\E{U^{(n)}(U^{(n)}-\tau^{(n)})}= \E{U^{(n)^{2}}}-\E{U^{(n)}\tau^{(n)}}
\\= 
 H_{n,1}^{2}+H_{n,2}
-\frac{n+1}{n-1}\left(H_{n,1}^{2}-H_{n,2} \right)
+\frac{2n}{n-1}\left(H_{n,1} -1\right)
\\ =
\frac{2n}{n-1}\left(H_{n,1}+H_{n,2} -1-n^{-1}H_{n,1}^{2}\right).
\end{array}
$$
This equation is compared with simulations in Fig. \ref{fig:fig1Lem1_6}.
\end{proof}

\begin{lemma}\label{lemUnUnmETaunYn}
For a Yule tree with speciation rate $\lambda=1$ we have  
\be\label{eqUnUnmETaunYn}
\E{U^{(n)}(U^{(n)}-\E{\tau^{(n)}\vert \mathcal{Y}_{n}})}= \frac{2n}{n-1}\left(H_{n,1}+H_{n,2} -1-n^{-1}H_{n,1}^{2}\right).
\ee
\end{lemma}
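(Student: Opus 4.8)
The plan is to avoid any fresh computation by recognizing that the left--hand side of \eqref{eqUnUnmETaunYn} is in fact \emph{equal} to the left--hand side of \eqref{eqUnUnmTaun}, whose value was already established in Lemma \ref{lemUnUnmTaun}. The whole argument rests on the observation, already noted in Section \ref{secYuleRVs}, that $U^{(n)}$ is $\mathcal{Y}_{n}$--measurable, i.e. $\E{U^{(n)}\vert \mathcal{Y}_{n}}=U^{(n)}$. Consequently the conditioning in $\E{\tau^{(n)}\vert \mathcal{Y}_{n}}$ is ``invisible'' once it is multiplied against $U^{(n)}$ and the outer expectation is taken.

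Concretely, I would expand the target as $\E{U^{(n)}(U^{(n)}-\E{\tau^{(n)}\vert \mathcal{Y}_{n}})} = \E{U^{(n)^{2}}} - \E{U^{(n)}\E{\tau^{(n)}\vert \mathcal{Y}_{n}}}$, and then treat the second term by the tower property together with the ``pull out what is known'' rule. Since $U^{(n)}$ is $\mathcal{Y}_{n}$--measurable, $\E{U^{(n)}\tau^{(n)}\vert \mathcal{Y}_{n}} = U^{(n)}\E{\tau^{(n)}\vert \mathcal{Y}_{n}}$, so taking expectations of both sides gives $\E{U^{(n)}\E{\tau^{(n)}\vert \mathcal{Y}_{n}}} = \E{\E{U^{(n)}\tau^{(n)}\vert \mathcal{Y}_{n}}} = \E{U^{(n)}\tau^{(n)}}$. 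Substituting this back yields $\E{U^{(n)}(U^{(n)}-\E{\tau^{(n)}\vert \mathcal{Y}_{n}})} = \E{U^{(n)^{2}}} - \E{U^{(n)}\tau^{(n)}} = \E{U^{(n)}(U^{(n)}-\tau^{(n)})}$.

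With this identity in hand the proof closes immediately: the right--hand side is precisely the quantity evaluated in Lemma \ref{lemUnUnmTaun}, namely $\frac{2n}{n-1}\left(H_{n,1}+H_{n,2}-1-n^{-1}H_{n,1}^{2}\right)$, which is exactly the claimed expression. There is no genuine computational obstacle here; the only ``hard part'' is conceptual, in recognizing that replacing $\tau^{(n)}$ by its conditional expectation $\E{\tau^{(n)}\vert \mathcal{Y}_{n}}$ leaves the cross--moment with the $\mathcal{Y}_{n}$--measurable factor $U^{(n)}$ unchanged. I would make sure to state explicitly which measurability/tower properties are invoked, since this same reduction is the crux of why Lemmas \ref{lemUnUnmTaun} and \ref{lemUnUnmETaunYn} share an identical value, and it foreshadows the more delicate computations for $\E{\E{\tau^{(n)}\vert \mathcal{Y}_{n}}^{2}}$ later on, where the conditioning genuinely does change the answer because both factors involve the random pair.
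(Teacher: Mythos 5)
Your proposal is correct and matches the paper's own proof, which likewise uses the tower property together with the $\mathcal{Y}_{n}$--measurability of $U^{(n)}$ to identify $\E{U^{(n)}(U^{(n)}-\E{\tau^{(n)}\vert \mathcal{Y}_{n}})}$ with $\E{U^{(n)}(U^{(n)}-\tau^{(n)})}$ and then invoke Lemma \ref{lemUnUnmTaun}. Your write--up is in fact slightly more explicit than the paper's one--line reduction, since you spell out the ``pull out what is known'' step.
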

\begin{proof}
$$
\begin{array}{l}
\E{U^{(n)}(U^{(n)}-\E{\tau^{(n)}\vert \mathcal{Y}_{n}})}= 
\E{\E{U^{(n)}(U^{(n)}-\tau^{(n)}\vert \mathcal{Y}_{n})}}
=\E{U^{(n)}(U^{(n)}-\tau^{(n)})}.
\end{array}
$$
This equation is compared with simulations in Fig. \ref{fig:fig1Lem1_6}.
\end{proof}

\begin{lemma}\label{lemUnmETaunYn2}
For a Yule tree with speciation rate $\lambda=1$ we have  
\be\label{eqUnmETaunYn2}
\begin{array}{l}
\E{(U^{(n)}-\E{\tau^{(n)}\vert \mathcal{Y}_{n}})^{2}} = 
\frac{4}{3}H_{n,2}+3-8n^{-1}H_{n,1}+\Theta(n^{-1})
\xrightarrow{n\to \infty} \frac{2\pi^{2}}{9}+3. 
\end{array}
\ee
\end{lemma}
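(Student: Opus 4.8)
The plan is to reduce the statement to quantities already computed in the excerpt plus a single genuinely new second moment, and then to extract that moment through a two--independent--pairs representation. Since $U^{(n)}$ is $\mathcal{Y}_{n}$--measurable, expanding the square gives
\[
\E{(U^{(n)}-\E{\tau^{(n)}\vert \mathcal{Y}_{n}})^{2}} = \E{U^{(n)^{2}}} - 2\,\E{U^{(n)}\,\E{\tau^{(n)}\vert \mathcal{Y}_{n}}} + \E{\E{\tau^{(n)}\vert \mathcal{Y}_{n}}^{2}}.
\]
The middle term collapses exactly as in Lemma \ref{lemUnUnmETaunYn}: because $U^{(n)}$ is known given $\mathcal{Y}_{n}$, we have $\E{U^{(n)}\,\E{\tau^{(n)}\vert \mathcal{Y}_{n}}}=\E{\E{U^{(n)}\tau^{(n)}\vert \mathcal{Y}_{n}}}=\E{U^{(n)}\tau^{(n)}}$, which is Lemma \ref{lemUnTaun}, while $\E{U^{(n)^{2}}}=H_{n,1}^{2}+H_{n,2}$ is Example \ref{exEUnm1m2}. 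Hence the only object still to be found is $\E{\E{\tau^{(n)}\vert \mathcal{Y}_{n}}^{2}}$.

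To evaluate it I would write the inner conditional expectation in indicator form. Given $\mathcal{Y}_{n}$ the interspeciation times are fixed and $\E{\tau^{(n)}\vert \mathcal{Y}_{n}} = \sum_{k=1}^{n-1}\E{1_{k}^{(n)}\vert \mathcal{Y}_{n}}\,(T_{k+1}+\ldots+T_{n})$. Squaring and using that the two samples $1_{k,1}^{(n)},1_{l,2}^{(n)}$ are conditionally independent given $\mathcal{Y}_{n}$, together with the defining Yule property that the vector of interspeciation times is independent of the tree topology and of the pair sampling, the indicator factors detach from the time factors and one obtains
\[
\E{\E{\tau^{(n)}\vert \mathcal{Y}_{n}}^{2}} = \sum_{k=1}^{n-1}\sum_{l=1}^{n-1}\E{1_{k,1}^{(n)}1_{l,2}^{(n)}}\Big[(H_{n,1}-H_{k,1})(H_{n,1}-H_{l,1}) + H_{n,2}-H_{\max(k,l),2}\Big],
\]
where the bracket is the elementary harmonic expression for $\E{(\sum_{i>k}T_{i})(\sum_{j>l}T_{j})}$, using $\E{T_{i}}=i^{-1}$ and $\var{T_{i}}=i^{-2}$.

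The main obstacle is the purely combinatorial coefficient $\E{1_{k,1}^{(n)}1_{l,2}^{(n)}}$, i.e.\ the joint law of the speciation events at which two independently sampled pairs coalesce. I would compute it by conditioning on how many tips the two sampled pairs share ($0$, $1$, or $2$), which reduces the problem to the three-- and four--taxon coalescence probabilities of the Yule tree. These are precisely the quantities that govern the second moment of the total cophenetic index, and since $\binom{n}{2}(U^{(n)}-\E{\tau^{(n)}\vert \mathcal{Y}_{n}})$ \emph{is} that index, the needed probabilities (or the corresponding sums) are available from \citet{KBar2018art}; the self--similar branching structure of the Yule tree also allows one to derive them recursively.

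The remainder is bookkeeping: substitute the joint probabilities, split the double sum according to $\max(k,l)$ and according to the overlap cases, and evaluate each resulting single and double harmonic sum with the ``H.'' lemmata of \citet{KBar2023arXivH}. Recombining with $\E{U^{(n)^{2}}}$ and $\E{U^{(n)}\tau^{(n)}}$ from the first paragraph, the leading $H_{n,1}^{2}$ (order $(\ln n)^{2}$) and the $H_{n,1}$ (order $\ln n$) contributions cancel, the coefficient of $H_{n,2}$ collapses to $\tfrac{4}{3}$ and the constant term to $3$, leaving the $n^{-1}\ln n$ correction $-8n^{-1}H_{n,1}$. The delicate point in this last step is precision: because $-8n^{-1}H_{n,1}$ is of order $n^{-1}\ln n$ while the claimed error is only $\Theta(n^{-1})$, the single--sum asymptotics must be retained to that sharper order rather than swept into a cruder remainder. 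Finally $H_{n,2}\to \pi^{2}/6$ yields the limit $\tfrac{4}{3}\cdot\tfrac{\pi^{2}}{6}+3=\tfrac{2\pi^{2}}{9}+3$, as asserted.
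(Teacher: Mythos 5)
Your proposal is correct, and it is essentially the paper's own alternative direct proof given in \ref{appAltProof}, rather than the main--text proof of Lemma \ref{lemUnmETaunYn2}, which is a short computation that cites the variance formula for $\var{U^{(n)}-\E{\tau^{(n)}\vert \mathcal{Y}_{n}}}$ from \citepos{KBar2018art} Thm.~$6.4$ and adds the squared mean. In \ref{appAltProof} the authors likewise introduce copies $\tau^{(n)}_{1},\tau^{(n)}_{2}$ that are conditionally independent given $\mathcal{Y}_{n}$, use the independence of the interspeciation times from topology and pair sampling to detach the time sums from the coalescence indicators, and plug in precisely the joint moments $\E{\E{1_{k}^{(n)}\vert \mathcal{Y}_{n}}^{2}}$ and $\E{\E{1_{k_{2}}^{(n)}\vert \mathcal{Y}_{n}}\E{1_{k_{1}}^{(n)}\vert \mathcal{Y}_{n}}}$ that you propose to derive by the $0/1/2$ shared--tip case analysis---this is exactly how they are established in \ref{appCoalProof}---before evaluating the resulting harmonic sums with the ``H.''\ lemmata. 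Your deviations are organizational rather than substantive: expanding $\E{(U^{(n)}-\tau^{(n)}_{1})(U^{(n)}-\tau^{(n)}_{2})}$ as $\E{U^{(n)^{2}}}-2\E{U^{(n)}\tau^{(n)}}+\E{\E{\tau^{(n)}\vert \mathcal{Y}_{n}}^{2}}$ lets you recycle Example \ref{exEUnm1m2} and Lemma \ref{lemUnTaun} and work with the tail sums $\sum_{i>k}T_{i}$, whereas the paper expands in the shared--path sums $\sum_{i\le k}T_{i}$; and you compute $\E{\E{\tau^{(n)}\vert \mathcal{Y}_{n}}^{2}}$ first and deduce the lemma from it, reversing the paper's logical order (its Lemma \ref{lemETaunYn2} is deduced \emph{from} Lemma \ref{lemUnmETaunYn2}), which is non--circular since your only inputs are the indicator moments. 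Your covariance bracket $(H_{n,1}-H_{k,1})(H_{n,1}-H_{l,1})+H_{n,2}-H_{\max(k,l),2}$ for the expected product of tail sums checks out against $\E{T_{i}}=i^{-1}$, $\var{T_{i}}=i^{-2}$, and your warning that the single--sum asymptotics must be retained to order $n^{-1}\ln n$ (because the term $-8n^{-1}H_{n,1}$ exceeds the stated $\Theta(n^{-1})$ remainder) flags exactly the delicate point of the bookkeeping that the paper's appendix carries out in full.
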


\begin{proof}
We have from the second formula in \citepos{KBar2018art} Thm. $6.4$ 
$$
\begin{array}{l}
\var{U^{(n)}-\E{\tau^{(n)} \vert \mathcal{Y}_{n}}} 
= 
\frac{12n^{2}(n^{2}-6n-4)H_{n-1,2}-9n^{4}+102n^{3}+51n^{2}-24nH_{n-1,1}-72n-72}{9n^{2}(n-1)^{2}} 
\end{array}
$$
and from this we obtain \citep[recalling that $\E{U^{(n)}-\tau^{(n)}}=2(n-H_{n,1})/(n-1)$, ][]{SSagKBar2012art}

$$
\begin{array}{l}
\E{\left(U^{(n)}-\E{\tau^{(n)} \vert \mathcal{Y}_{n}}\right)^{2}} 
=\var{U^{(n)}-\E{\tau^{(n)} \vert \mathcal{Y}_{n}}} + \left(\E{U^{(n)}-\E{\tau^{(n)} \vert \mathcal{Y}_{n}}}\right)^{2}
\\ = \var{U^{(n)}-\E{\tau^{(n)} \vert \mathcal{Y}_{n}}} + \left(\E{U^{(n)}-\tau^{(n)} }\right)^{2}
=
\var{U^{(n)}-\E{\tau^{(n)} \vert \mathcal{Y}_{n}}} + \left(\frac{2(n-H_{n,1})}{n-1}\right)^{2}
\\ 
=
\frac{12n^{4}H_{n-1,2}-72n^{3}H_{n-1,2}-48n^{2}H_{n-1,2}-9n^{4}+102n^{3}+51n^{2}-24nH_{n-1,1}-72n-72}{9n^{2}(n-1)^{2}} 
+ \frac{36n^{4}-72n^{3}H_{n,1}+36n^{2}H_{n,1}^2}{9n^{2}(n-1)^{2}}
\\=
\frac{12n^{4}H_{n,2}-12n^{2}-72n^{3}H_{n,2}+72n-48n^{2}H_{n,2}+48
-9n^{4}+102n^{3}+51n^{2}-24nH_{n-1,1}-72n-72}{9n^{2}(n-1)^{2}} 
+ \frac{36n^{4}-72n^{3}H_{n,1}+36n^{2}H_{n,1}^2}{9n^{2}(n-1)^{2}}
\\=
\frac{12n^{4}H_{n,2}+27n^{4}
-72n^{3}H_{n,1}-72n^{3}H_{n,2}+102n^{3}
+36n^{2}H_{n,1}^{2}-48n^{2}H_{n,2}+39n^{2}
-24nH_{n,1}}{9n^{2}(n-1)^{2}}.
\end{array}
$$
We can simplify the above and obtain the asymptotics
\be\label{eqEUnmtaun2}
\begin{array}{l}
\E{\left(U^{(n)}-\E{\tau^{(n)} \vert \mathcal{Y}_{n}}\right)^{2}} =
\frac{4n^{3}H_{n,2}+9n^{3}
-24n^{2}H_{n,1}-24n^{2}H_{n,2}+34n^{2}
+12nH_{n,1}^{2}-16nH_{n,2}+13n
-8H_{n,1}}{3n(n-1)^{2}} 
\\ \xlongrightarrow{n\to \infty} \frac{2\pi^{2}}{9}+3 \approx 5.19325 .
\end{array}
\ee
This key equation is compared with simulations in Fig. \ref{fig:fig1Lem1_6}.
In \ref{appAltProof} we show an alternative, direct proof of this lemma.
\end{proof}

\begin{lemma}\label{lemETaunYn2}
For a Yule tree with speciation rate $\lambda=1$ we have  
\be\label{eqETaunYn2}
\E{\E{\tau^{(n)}\vert \mathcal{Y}_{n}}^{2}} = 
H_{n,1}^{2}-4H_{n,1}- \frac{5H_{n,2}}{3} +7
+\frac{4H_{n,1}^{2}}{n}
+\Theta(n^{-1}\ln n).
\ee
\end{lemma}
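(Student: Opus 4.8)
The plan is to reduce this second moment to quantities already computed in this section, with no new harmonic-sum evaluation. Writing $V = \E{\tau^{(n)}\vert \mathcal{Y}_{n}}$ for brevity, I would begin from the trivial identity $V = U^{(n)} - (U^{(n)} - V)$, square it, and take expectations, obtaining
\be
\E{V^{2}} = \E{(U^{(n)})^{2}} - 2\,\E{U^{(n)}(U^{(n)} - V)} + \E{(U^{(n)} - V)^{2}}.
\ee
Each summand on the right is already available: $\E{(U^{(n)})^{2}} = H_{n,1}^{2} + H_{n,2}$ from Example \ref{exEUnm1m2}, the cross term $\E{U^{(n)}(U^{(n)} - V)} = \frac{2n}{n-1}\left(H_{n,1} + H_{n,2} - 1 - n^{-1}H_{n,1}^{2}\right)$ from Lemma \ref{lemUnUnmETaunYn}, and $\E{(U^{(n)} - V)^{2}} = \frac{4}{3}H_{n,2} + 3 - 8n^{-1}H_{n,1} + \Theta(n^{-1})$ from Lemma \ref{lemUnmETaunYn2}.

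The remaining work is substitution and collection. I would split the prefactor as $\frac{4n}{n-1} = 4 + \frac{4}{n-1}$ so that the cross term splits into an exact part $-4\left(H_{n,1} + H_{n,2} - 1 - n^{-1}H_{n,1}^{2}\right)$ and a remainder $-\frac{4}{n-1}\left(H_{n,1} + H_{n,2} - 1 - n^{-1}H_{n,1}^{2}\right)$. Since $H_{n,1} \sim \ln n$ and $H_{n,2} \to \pi^{2}/6$, this remainder is of order $\Theta(n^{-1}\ln n)$ and may be folded into the error term. Collecting the surviving pieces then yields $H_{n,1}^{2}$ from the first summand, the combined $H_{n,2}$-coefficient $1 - 4 + \frac{4}{3} = -\frac{5}{3}$, the linear term $-4H_{n,1}$, the constant $4 + 3 = 7$, and the retained $4n^{-1}H_{n,1}^{2}$, which reproduces the stated expression.

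The only point requiring genuine care is the asymptotic bookkeeping. One must verify that every discarded contribution is at most $\Theta(n^{-1}\ln n)$: in particular the $-8n^{-1}H_{n,1}$ term inherited from Lemma \ref{lemUnmETaunYn2} and the dominant $-\frac{4}{n-1}H_{n,1}$ piece of the remainder are both of this order, while the pre-existing $\Theta(n^{-1})$ is smaller. At the same time the term $\frac{4H_{n,1}^{2}}{n}$ must be kept explicitly, since $n^{-1}H_{n,1}^{2} = \Theta(n^{-1}\ln^{2} n)$ strictly dominates the $\Theta(n^{-1}\ln n)$ error and cannot be absorbed. Because the underlying identity is exact and all three ingredients are established earlier, no deeper obstacle arises.
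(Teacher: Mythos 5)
Your proposal is correct and follows essentially the same route as the paper's proof: the identical decomposition $\E{\E{\tau^{(n)}\vert\mathcal{Y}_{n}}^{2}} = \E{U^{(n)^{2}}} - 2\E{U^{(n)}(U^{(n)}-\E{\tau^{(n)}\vert\mathcal{Y}_{n}})} + \E{(U^{(n)}-\E{\tau^{(n)}\vert\mathcal{Y}_{n}})^{2}}$, fed by Example \ref{exEUnm1m2} and Lemmata \ref{lemUnUnmETaunYn} and \ref{lemUnmETaunYn2}. The only (immaterial) difference is that the paper carries the exact rational expression from Eq. \eqref{eqEUnmtaun2} through the computation before compressing to asymptotics, while you work directly at the stated $\Theta(n^{-1}\ln n)$ precision, correctly retaining the dominant $4n^{-1}H_{n,1}^{2} = \Theta(n^{-1}\ln^{2}n)$ term.
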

\begin{proof}
We may write
$$
\begin{array}{l}
\E{\E{\tau^{(n)}\vert \mathcal{Y}_{n}}^{2}} = 
\E{\left(U^{(n)}-U^{(n)}+\E{\tau^{(n)}\vert \mathcal{Y}_{n}}\right)^{2}} 
\\ =
\E{U^{(n)^{2}}}
-2\E{U^{(n)}\left(U^{(n)}-\E{\tau^{(n)}\vert \mathcal{Y}_{n}}
\right)}
+
\E{\left(U^{(n)}-\E{\tau^{(n)}\vert \mathcal{Y}_{n}}\right)^{2}} 
\\ \stackrel{Lemmata~\ref{lemUnUnmETaunYn},\ref{lemUnmETaunYn2}}{=}
H_{n,1}^{2}+H_{n,2} -\frac{4n}{n-1}\left(H_{n,1}+H_{n,2} -1-n^{-1}H_{n,1}^{2}\right)
+ \frac{4n^{2}H_{n,2}}{3(n-1)^{2}} 
\\+\frac{3n^{2}}{(n-1)^{2}} 
-\frac{8nH_{n,1}}{(n-1)^{2}} 
-\frac{8nH_{n,2}}{(n-1)^{2}} 
+\frac{34n}{3(n-1)^{2}} 
+\frac{4H_{n,1}^{2}}{(n-1)^{2}} 
-\frac{16H_{n,2}}{3(n-1)^{2}} 
+\frac{13}{3(n-1)^{2}} 
-\frac{8H_{n,1}}{3n(n-1)^{2}} 
\\=
H_{n,1}^{2}-4H_{n,1}- \frac{5H_{n,2}}{3} +7
+\frac{4H_{n,1}^{2}}{n}
-\frac{12H_{n,1}}{n-1}
-\frac{28H_{n,2}}{3(n-1)} 
+\frac{64}{3(n-1)}
+\frac{8H_{n,1}^{2}}{(n-1)^{2}} 
\\-\frac{8H_{n,1}}{(n-1)^{2}} 
-\frac{12H_{n,2}}{(n-1)^{2}} 
+\frac{56}{3(n-1)^{2}} 
-\frac{4H_{n,1}^{2}}{n(n-1)^{2}}
-\frac{8H_{n,1}}{3n(n-1)^{2}} .
\end{array}
$$
This equation is compared with simulations in Fig. \ref{fig:fig1Lem1_6}.
\end{proof}

\begin{lemma}\label{lemtaunmETaunYn2}
For a Yule tree with speciation rate $\lambda=1$ we have  
\be\label{eqtaunmETaunYn2}
\begin{array}{l}
\E{(\tau^{(n)}-\E{\tau^{(n)}\vert \mathcal{Y}_{n}})^{2}}=
\E{\left(\left(U^{(n)}-\tau^{(n)}\right)-\left(U^{(n)}-\E{\tau^{(n)}\vert \mathcal{Y}_{n}}\right)\right)^{2}} 
\\ = 
\frac{8}{3}H_{n,2}-3-\frac{2H_{n,1}^{2}}{(n-1)}
+ \Theta(n^{-1}\ln n)
\xrightarrow{n\to \infty} \frac{4\pi^{2}}{9} - 3 . 
\end{array}
\ee
\end{lemma}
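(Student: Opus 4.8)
The plan is to exploit the algebraic identity already recorded in the statement, namely that $\tau^{(n)}-\E{\tau^{(n)}\vert \mathcal{Y}_{n}}=(U^{(n)}-\E{\tau^{(n)}\vert \mathcal{Y}_{n}})-(U^{(n)}-\tau^{(n)})$, and thereby to reduce the whole computation to the two second moments already established in Lemmata \ref{lemUnmTaun2} and \ref{lemUnmETaunYn2}. Writing $D=U^{(n)}-\tau^{(n)}$ for the shared path length and $\bar{D}=U^{(n)}-\E{\tau^{(n)}\vert \mathcal{Y}_{n}}$ for what is its conditional expectation, I would first expand $\E{(\bar{D}-D)^{2}}=\E{\bar{D}^{2}}-2\E{\bar{D}D}+\E{D^{2}}$.

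The key step is the evaluation of the cross term $\E{\bar{D}D}$. Since $U^{(n)}$ is $\mathcal{Y}_{n}$--measurable, we have $\bar{D}=U^{(n)}-\E{\tau^{(n)}\vert \mathcal{Y}_{n}}=\E{U^{(n)}-\tau^{(n)}\vert \mathcal{Y}_{n}}=\E{D\vert \mathcal{Y}_{n}}$, so $\bar{D}$ is itself $\mathcal{Y}_{n}$--measurable and coincides with the conditional mean of $D$. Conditioning on $\mathcal{Y}_{n}$ and pulling out the measurable factor then gives $\E{\bar{D}D}=\E{\bar{D}\,\E{D\vert \mathcal{Y}_{n}}}=\E{\bar{D}^{2}}$. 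Substituting this into the expansion collapses the three terms to the clean decomposition $\E{(\tau^{(n)}-\E{\tau^{(n)}\vert \mathcal{Y}_{n}})^{2}}=\E{D^{2}}-\E{\bar{D}^{2}}$, which is just the expected within--tree variance $\E{\var{\tau^{(n)}\vert \mathcal{Y}_{n}}}$.

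With this reduction in hand I would substitute $\E{(U^{(n)}-\tau^{(n)})^{2}}$ from Lemma \ref{lemUnmTaun2} and $\E{(U^{(n)}-\E{\tau^{(n)}\vert \mathcal{Y}_{n}})^{2}}$ from Lemma \ref{lemUnmETaunYn2} and collect terms. The $H_{n,2}$ coefficient combines as $\tfrac{4n+2}{n-1}-\tfrac{4}{3}=\tfrac{8}{3}+\tfrac{6}{n-1}$, producing the leading $\tfrac{8}{3}H_{n,2}$ together with an $\Theta(n^{-1})$ remainder; the constant $3$ from Lemma \ref{lemUnmETaunYn2} yields the $-3$; and the two $H_{n,1}$ contributions $-\tfrac{4H_{n,1}}{n-1}$ and $+8n^{-1}H_{n,1}$ combine to $\tfrac{4H_{n,1}}{n}+\Theta(n^{-2}\ln n)=\Theta(n^{-1}\ln n)$. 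Letting $n\to\infty$ and using $H_{n,2}\to \pi^{2}/6$ together with $H_{n,1}^{2}/(n-1)\to 0$ then gives the stated limit $\tfrac{4\pi^{2}}{9}-3$.

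The only real subtlety, and the step to watch, is the asymptotic bookkeeping: the surviving term $-2H_{n,1}^{2}/(n-1)$ is of order $n^{-1}(\ln n)^{2}$, which is strictly larger than the $\Theta(n^{-1}\ln n)$ error and so must be displayed explicitly rather than absorbed, even though it still vanishes in the limit. Everything else is routine algebra, and the genuine conceptual content lies entirely in the conditioning argument that identifies the cross term with $\E{\bar{D}^{2}}$ and thus kills it.
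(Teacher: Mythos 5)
Your proposal is correct and follows essentially the same route as the paper's proof: the same conditioning argument identifying the cross term with $\E{\left(U^{(n)}-\E{\tau^{(n)}\vert \mathcal{Y}_{n}}\right)^{2}}$, collapsing the expansion to $\E{\left(U^{(n)}-\tau^{(n)}\right)^{2}}-\E{\left(U^{(n)}-\E{\tau^{(n)}\vert \mathcal{Y}_{n}}\right)^{2}}$, followed by substitution from Lemmata \ref{lemUnmTaun2} and \ref{lemUnmETaunYn2}. Your observation that the $-2H_{n,1}^{2}/(n-1)$ term is of order $n^{-1}\ln^{2}n$ and hence cannot be absorbed into the $\Theta(n^{-1}\ln n)$ remainder is precisely the bookkeeping reflected in the paper's statement.
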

\begin{proof}
We begin by rewriting
$$
\begin{array}{l}
\E{\left(\tau^{(n)}-\E{\tau^{(n)}\vert \mathcal{Y}_{n}}\right)^{2}} 
=
\E{\left(\left(U^{(n)}-\tau^{(n)}\right)-\left(U^{(n)}-\E{\tau^{(n)}\vert \mathcal{Y}_{n}}\right)\right)^{2}} 
\\ = \E{\left(U^{(n)}-\tau^{(n)}\right)^{2}}
- 2\E{\left(U^{(n)}-\tau^{(n)}\right)\left(U^{(n)}-\E{\tau^{(n)}\vert \mathcal{Y}_{n}}\right)}
+\E{\left(\E{U^{(n)}-\tau^{(n)}\vert \mathcal{Y}_{n}}\right)^{2}} 
\end{array}
$$
and we recall that we know 
$\E{\left(U^{(n)}-\tau^{(n)}\right)^{2}}$
and Lemma \ref{lemUnmETaunYn2} gives us $\E{\left(U^{(n)}-\E{\tau^{(n)}\vert \mathcal{Y}_{n}}\right)^{2}}$.
We then turn to considering the remaining

\be\label{eqEUnmTaunUnmETaunYn2}
\begin{array}{l}
\E{(\left(U^{(n)}-\tau^{(n)}\right)\left(U^{(n)}-\E{\tau^{(n)}\vert \mathcal{Y}_{n}}\right)}
= \E{\E{(\left(U^{(n)}-\tau^{(n)}\right)\left(U^{(n)}-\E{\tau^{(n)}\vert \mathcal{Y}_{n}}\right)}\vert \mathcal{Y}_{n}}
\\ = \E{\E{U^{(n)}-\tau^{(n)}}\vert \mathcal{Y}_{n}\E{U^{(n)}-\tau^{(n)}\vert \mathcal{Y}_{n}}}
= \E{\left(\E{U^{(n)}-\tau^{(n)}}\vert \mathcal{Y}_{n}\right)^{2}}
\end{array}
\ee
and hence can write 

\be\label{eqEUnmTaunUnmETaunYn2sq}
\E{\left(\left(U^{(n)}-\tau^{(n)}\right)-\left(U^{(n)}-\E{\tau^{(n)}\vert \mathcal{Y}_{n}}\right)\right)^{2}} 
= \E{\left(U^{(n)}-\tau^{(n)}\right)^{2}}
-\E{\left(\E{U^{(n)}-\tau^{(n)}\vert \mathcal{Y}_{n}}\right)^{2}} 
\ee
and plugging in our previously known values

$$
\begin{array}{l}
\E{\left(U^{(n)}-\tau^{(n)}\right)^{2}} -\E{\E{\left(U^{(n)}-\tau^{(n)}\vert \mathcal{Y}_{n}}\right)^{2}} 
\\ \stackrel{Lemmata~\ref{lemUnmTaun2},\ref{lemUnmETaunYn2}}{=}
\frac{4n+2}{n-1}H_{n,2}-\frac{2H_{n,1}^{2}}{n-1}-\frac{4H_{n,1}}{n-1}
-\frac{4n^{2}H_{n,2}}{3(n-1)^{2}} 
-\frac{3n^{2}}{(n-1)^{2}} 
+\frac{8nH_{n,1}}{(n-1)^{2}} 
+\frac{8nH_{n,2}}{(n-1)^{2}} 
\\-\frac{34n}{3(n-1)^{2}} 
-\frac{4H_{n,1}^{2}}{(n-1)^{2}} 
+\frac{16H_{n,2}}{3(n-1)^{2}} 
-\frac{13}{3(n-1)^{2}} 
+\frac{8H_{n,1}}{3n(n-1)^{2}} 
\\=
4H_{n,2}-\frac{4}{3}H_{n,2}
+\frac{10H_{n,2}}{3(n-1)} -\frac{4H_{n,2}}{3(n-1)^{2}} 
-3
-\frac{6}{(n-1)} 
-\frac{3}{(n-1)^{2}} 
-\frac{2H_{n,1}^{2}}{n-1}-\frac{4H_{n,1}^{2}}{(n-1)^{2}} 
+\frac{4H_{n,1}}{(n-1)} +\frac{8H_{n,1}}{(n-1)^{2}} 
\\ 
+\frac{8H_{n,2}}{n-1} +\frac{8H_{n,2}}{(n-1)^{2}} 
-\frac{34}{3(n-1)} -\frac{34}{3(n-1)^{2}} 
+\frac{16H_{n,2}}{3(n-1)^{2}} 
-\frac{13}{3(n-1)^{2}} 
+\frac{8H_{n,1}}{3n(n-1)^{2}} 
\\=
\frac{8}{3}H_{n,2}-3
-\frac{2H_{n,1}^{2}}{n-1}

+\frac{4H_{n,1}}{(n-1)}

+\frac{34H_{n,2}}{3(n-1)}
-\frac{52}{3(n-1)}

-\frac{4H_{n,1}^{2}}{(n-1)^{2}} 
+\frac{8H_{n,1}}{(n-1)^{2}} 
+\frac{12H_{n,2}}{(n-1)^{2}} 
-\frac{56}{3(n-1)^{2}} 

+\frac{8H_{n,1}}{3n(n-1)^{2}} 
\\ \xlongrightarrow{n\to \infty} \frac{4\pi^{2}}{6}-\frac{2\pi^{2}}{9}-3  = \frac{4\pi^{2}}{9} - 3\approx 1.38649 .
\end{array}
$$
This equation is compared with simulations in Fig. \ref{fig:fig2Lem7_11}.
\end{proof}

\begin{lemma}\label{lemvarTaun}
For a Yule tree with speciation rate $\lambda=1$ we have  
\be\label{eqvarTaun}
\begin{array}{l}
\var{\tau^{(n)}}
= \frac{n+1}{n-1}H_{n,2}
-\frac{2(n+1)}{(n-1)^{2}}H_{n,1}^{2}
+\frac{4(n+1)}{(n-1)^{2}}H_{n,1}
-\frac{4}{n-1}
-\frac{4}{(n-1)^{2}}
\xrightarrow{n\to \infty} \frac{\pi^{2}}{6}.
\end{array}
\ee
\end{lemma}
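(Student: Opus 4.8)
The plan is to obtain the variance directly from the first two moments of $\tau^{(n)}$ already computed in Example \ref{exEtaunm1m2}, via the identity $\var{\tau^{(n)}} = \E{\tau^{(n)^{2}}} - \left(\E{\tau^{(n)}}\right)^{2}$. From that example,
$$\E{\tau^{(n)}} = \frac{n+1}{n-1}(H_{n,1}-2) + \frac{2}{n-1}, \qquad \E{\tau^{(n)^{2}}} = \frac{n+1}{n-1}\left((H_{n,1}-2)^{2} + H_{n,2}\right) - \frac{4}{n-1},$$
so the whole proof reduces to squaring the first expression and subtracting it from the second, then collecting terms into the form of Eq. \eqref{eqvarTaun}.

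First I would abbreviate $c = \frac{n+1}{n-1}$ and keep $(H_{n,1}-2)$ as a single block, which makes the cancellations transparent. Squaring gives
$$\left(\E{\tau^{(n)}}\right)^{2} = c^{2}(H_{n,1}-2)^{2} + \frac{4c}{n-1}(H_{n,1}-2) + \frac{4}{(n-1)^{2}},$$
and subtracting this from $\E{\tau^{(n)^{2}}}$ leaves the coefficient $c(1-c)$ in front of $(H_{n,1}-2)^{2}$. The key simplifying observation is that $1 - c = 1 - \frac{n+1}{n-1} = -\frac{2}{n-1}$, so $c(1-c) = -\frac{2(n+1)}{(n-1)^{2}}$; this is precisely what converts the leading $\frac{n+1}{n-1}$ prefactor into the $(n-1)^{-2}$ prefactors appearing in the statement, while the $\frac{n+1}{n-1}H_{n,2}$ term passes through untouched.

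Next I would expand $(H_{n,1}-2)^{2} = H_{n,1}^{2} - 4H_{n,1} + 4$ inside the term $-\frac{2(n+1)}{(n-1)^{2}}(H_{n,1}-2)^{2}$ and inside $-\frac{4c}{n-1}(H_{n,1}-2) = -\frac{4(n+1)}{(n-1)^{2}}(H_{n,1}-2)$, and collect like terms. The $H_{n,1}$ contributions combine to $\frac{4(n+1)}{(n-1)^{2}}H_{n,1}$, whereas the two constant contributions $\pm\frac{8(n+1)}{(n-1)^{2}}$ cancel exactly, leaving only $-\frac{4}{n-1}-\frac{4}{(n-1)^{2}}$ alongside the $\frac{n+1}{n-1}H_{n,2}$ and $-\frac{2(n+1)}{(n-1)^{2}}H_{n,1}^{2}$ terms; this is exactly Eq. \eqref{eqvarTaun}. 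For the limit I would note that $\frac{n+1}{n-1}\to 1$ and $H_{n,2}\to\pi^{2}/6$, while every remaining term carries at least one factor of $(n-1)^{-1}$ against at most $H_{n,1}^{2} = O((\ln n)^{2})$ and hence vanishes, giving the claimed limit $\pi^{2}/6$.

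There is no genuine obstacle here beyond careful bookkeeping; the single place demanding attention is tracking the two $\pm\frac{8(n+1)}{(n-1)^{2}}$ constants so that their cancellation is not overlooked, since otherwise the finite-$n$ formula would fail to match.
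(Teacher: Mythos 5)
Your proposal is correct and follows essentially the same route as the paper's own proof: both compute $\var{\tau^{(n)}} = \E{\tau^{(n)^{2}}} - \left(\E{\tau^{(n)}}\right)^{2}$ from the moments in Example \ref{exEtaunm1m2} and simplify, with your $c(1-c) = -\frac{2(n+1)}{(n-1)^{2}}$ observation being just a tidier bookkeeping of the same cancellations the paper carries out term by term. The algebra checks out, including the exact cancellation of the two $\pm\frac{8(n+1)}{(n-1)^{2}}$ constants and the limiting argument.
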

\begin{proof}
We use the moment results presented at the beginning of this Section.
$$
\begin{array}{l}
\var{\tau^{(n)}}
= \E{\tau^{(n)^{2}}}
-\left(\E{\tau^{(n)}} \right)^{2}
 =
\frac{n+1}{n-1}\left((H_{n,1}-2)^{2}+H_{n,2} \right)-\frac{4}{n-1}
-\left(\frac{n+1}{n-1}H_{n,1}-\frac{2n}{n-1}\right)^{2}
\\=
\frac{n+1}{n-1}\left(H_{n,1}^{2}-4H_{n,1} + 4+H_{n,2} \right)
-\frac{4}{n-1}
-\left(\frac{n+1}{n-1}\right)^{2}H_{n,1}^{2}
+\frac{4n(n+1)}{(n-1)^{2}}H_{n,1}
-\frac{4n^{2}}{(n-1)^{2}}
\\=
\frac{n+1}{n-1}H_{n,2}
+\frac{n+1}{n-1}(1-\frac{n+1}{n-1})H_{n,1}^{2}
-4\frac{n+1}{n-1}(1-\frac{n}{n-1})H_{n,1}
+4(\frac{n+1}{n-1}-\frac{n^{2}}{(n-1)^{2}})
-\frac{4}{n-1}
\\=
\frac{n+1}{n-1}H_{n,2}
-\frac{2(n+1)}{(n-1)^{2}}H_{n,1}^{2}
+\frac{4(n+1)}{(n-1)^{2}}H_{n,1}
+4\frac{n^{2}-1-n^{2}}{(n-1)^{2}}
-\frac{4}{n-1}
\\=
\frac{n+1}{n-1}H_{n,2}
-\frac{2(n+1)}{(n-1)^{2}}H_{n,1}^{2}
+\frac{4(n+1)}{(n-1)^{2}}H_{n,1}
-\frac{4}{n-1}
-\frac{4}{(n-1)^{2}}
\xrightarrow{n\to \infty} \frac{\pi^{2}}{6}
\approx 1.645.
\end{array}
$$
This equation is compared with simulations in Fig. \ref{fig:fig2Lem7_11}.
\end{proof}

\begin{lemma}\label{lemvarTaunYn}
For a Yule tree with speciation rate $\lambda=1$ we have  
\be\label{eqvarTaunYn}
\begin{array}{l}
\var{\E{\tau^{(n)}\vert  \mathcal{Y}_{n}}}
= 
3 - \frac{5}{3} H_{n,2}
-\frac{28H_{n,2}}{3(n-1)} +\frac{40}{3(n-1)}
+\Theta(n^{-2}) 
\xrightarrow{n\to \infty} 3- \frac{5\pi^{2}}{18}.
\end{array}
\ee
\end{lemma}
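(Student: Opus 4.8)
The plan is to compute the variance straight from its definition,
$$
\var{\E{\tau^{(n)}\vert \mathcal{Y}_{n}}}
= \E{\E{\tau^{(n)}\vert \mathcal{Y}_{n}}^{2}}
- \left(\E{\E{\tau^{(n)}\vert \mathcal{Y}_{n}}}\right)^{2},
$$
since both ingredients are already available. For the outer mean the tower property gives $\E{\E{\tau^{(n)}\vert \mathcal{Y}_{n}}} = \E{\tau^{(n)}} = \frac{n+1}{n-1}H_{n,1} - \frac{2n}{n-1}$ (Example \ref{exEtaunm1m2}); for the second moment I would take the full, non--asymptotic expression established in the proof of Lemma \ref{lemETaunYn2}, rather than its truncated asymptotic form. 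The whole argument then reduces to forming $\E{\E{\tau^{(n)}\vert \mathcal{Y}_{n}}^{2}} - (\E{\tau^{(n)}})^{2}$ and collecting terms.

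The one preliminary computation is squaring the mean. Writing $\frac{n+1}{n-1} = 1 + \frac{2}{n-1}$ and re--expressing each rational coefficient $\frac{4n(n+1)}{(n-1)^{2}}$, $\frac{4n^{2}}{(n-1)^{2}}$ as a constant part plus $(n-1)^{-1}$, $(n-1)^{-2}$ remainders, I would obtain $\left(\E{\tau^{(n)}}\right)^{2} = H_{n,1}^{2} + \frac{4H_{n,1}^{2}}{n-1} + \frac{4H_{n,1}^{2}}{(n-1)^{2}} - 4H_{n,1} - \frac{(12n-4)H_{n,1}}{(n-1)^{2}} + 4 + \frac{8n-4}{(n-1)^{2}}$. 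Subtracting this from the Lemma \ref{lemETaunYn2} expression, the leading $H_{n,1}^{2}$ and $-4H_{n,1}$ cancel at once, the constants combine to $7-4=3$, and $-\frac{5H_{n,2}}{3}$ survives untouched, producing the dominant part $3-\frac{5}{3}H_{n,2}$.

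I expect the main obstacle---and the sole reason the statement is not immediate---to be verifying that the lower--order corrections conspire to leave a genuine $\Theta(n^{-2})$ remainder with no surviving logarithmic growth at order $n^{-1}$ or in the limit. A naive count fears terms of size $\Theta(n^{-1}\ln n)$ from the $H_{n,1}$--linear pieces and $\Theta(n^{-2}(\ln n)^{2})$ from the $H_{n,1}^{2}$ pieces. I would thus treat these two families separately. Grouping $-\frac{12H_{n,1}}{n-1} + \frac{(12n-4)H_{n,1}}{(n-1)^{2}}$ over the common denominator $(n-1)^{2}$ collapses the $\Theta(n^{-1}\ln n)$ part to $\frac{8H_{n,1}}{(n-1)^{2}}$, which then cancels the $-\frac{8H_{n,1}}{(n-1)^{2}}$ already present, leaving only the negligible $-\frac{8H_{n,1}}{3n(n-1)^{2}} = o(n^{-2})$. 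Likewise, combining $\frac{4H_{n,1}^{2}}{n} - \frac{4H_{n,1}^{2}}{n-1} + \frac{4H_{n,1}^{2}}{(n-1)^{2}} - \frac{4H_{n,1}^{2}}{n(n-1)^{2}}$ telescopes to zero, eliminating every $(\ln n)^{2}$ contribution.

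What then remains beyond $3-\frac{5}{3}H_{n,2}$ is the $\Theta(n^{-1})$ pair $-\frac{28H_{n,2}}{3(n-1)} + \frac{40}{3(n-1)}$---the second constant arising from splitting $\frac{8n-4}{(n-1)^{2}} = \frac{8}{n-1} + \frac{4}{(n-1)^{2}}$ and combining with $\frac{64}{3(n-1)}$---together with a $\Theta(n^{-2})$ tail of the form $-\frac{12H_{n,2}}{(n-1)^{2}} + \frac{44}{3(n-1)^{2}}$, which is exactly the claimed remainder. The limit finally follows from $H_{n,2}\to \pi^{2}/6$, whence $3-\frac{5}{3}H_{n,2}\to 3-\frac{5\pi^{2}}{18}$ and every remaining term vanishes.
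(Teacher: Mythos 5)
Your proposal is correct and follows essentially the same route as the paper: the paper likewise computes $\var{\E{\tau^{(n)}\vert \mathcal{Y}_{n}}} = \E{\E{\tau^{(n)}\vert \mathcal{Y}_{n}}^{2}} - \left(\E{\tau^{(n)}}\right)^{2}$, substituting the full non--asymptotic expression from the proof of Lemma \ref{lemETaunYn2} together with $\E{\tau^{(n)}}$ from \cite{SSagKBar2012art}, and your term--by--term cancellations (the vanishing of all $H_{n,1}^{2}$ pieces, the collapse of the $H_{n,1}$ terms to $-\frac{8H_{n,1}}{3n(n-1)^{2}}$, and the surviving $-\frac{12H_{n,2}}{(n-1)^{2}}+\frac{44}{3(n-1)^{2}}$ tail) reproduce the paper's final expression exactly. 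No gaps; nothing further is needed.
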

\begin{proof}
We use the moment results presented at the beginning of this Section.
$$
\begin{array}{l}
\var{\E{\tau^{(n)}\vert  \mathcal{Y}_{n}}}
= \E{\E{\tau^{(n)}\vert  \mathcal{Y}_{n}}^{2}}
-\left(\E{\tau^{(n)}} \right)^{2}
\\
\stackrel{Lemma ~\ref{lemETaunYn2}~\mathrm{and}~\text{\cite{SSagKBar2012art}}}{=}
\\
H_{n,1}^{2}-4H_{n,1}- \frac{5H_{n,2}}{3} +7
+\frac{4H_{n,1}^{2}}{n}
-\frac{12H_{n,1}}{n-1}
-\frac{28H_{n,2}}{3(n-1)} 
+\frac{64}{3(n-1)}
+\frac{8H_{n,1}^{2}}{(n-1)^{2}} 
\\-\frac{8H_{n,1}}{(n-1)^{2}} 
-\frac{12H_{n,2}}{(n-1)^{2}} 
+\frac{56}{3(n-1)^{2}} 
-\frac{4H_{n,1}^{2}}{n(n-1)^{2}}
-\frac{8H_{n,1}}{3n(n-1)^{2}}
-\left(H_{n,1}+\frac{2H_{n,1}}{n-1}-\frac{2n}{n-1} \right)^{2}
\end{array}
$$

$$
\begin{array}{l}
=H_{n,1}^{2}-4H_{n,1}- \frac{5H_{n,2}}{3} +7
+\frac{4H_{n,1}^{2}}{n}
-\frac{12H_{n,1}}{n-1}
-\frac{28H_{n,2}}{3(n-1)} 
+\frac{64}{3(n-1)}
+\frac{8H_{n,1}^{2}}{(n-1)^{2}} 
\\-\frac{8H_{n,1}}{(n-1)^{2}} 
-\frac{12H_{n,2}}{(n-1)^{2}} 
+\frac{56}{3(n-1)^{2}} 
-\frac{4H_{n,1}^{2}}{n(n-1)^{2}}
-\frac{8H_{n,1}}{3n(n-1)^{2}}
\\
-\left(
H_{n,1}^{2}
+\frac{4H_{n,1}^{2}}{n-1}
-4H_{n,1}
-\frac{4H_{n,1}}{n-1}
+\frac{4H_{n,1}^{2}}{(n-1)^{2}}
-\frac{8nH_{n,1}}{(n-1)^{2}} 
+\frac{4n^{2}}{(n-1)^{2}} 
\right)
\end{array}
$$

$$
\begin{array}{l}
=
3 - \frac{5H_{n,2}}{3} 
-\frac{28H_{n,2}}{3(n-1)} 
+\frac{40}{3(n-1)}
-\frac{12H_{n,2}}{(n-1)^{2}} 
+\frac{44}{3(n-1)^{2}}
-\frac{8H_{n,1}}{3n(n-1)^{2}} 
\xrightarrow{n\to \infty} 3- \frac{5\pi^{2}}{18}
\approx 0.258.
\end{array}
$$
This equation is compared with simulations in Fig. \ref{fig:fig2Lem7_11}.
\end{proof}

\begin{lemma}\label{lemvarUnmTaun}
For a Yule tree with speciation rate $\lambda=1$ we have  
\be\label{eqvarUnmTaun}
\begin{array}{rcl}
\var{U^{(n)}-\tau^{(n)}}
&= &4(H_{n,2}-1)-\frac{2H_{n,1}^{2}}{n-1}
+\frac{4H_{n,1}}{n-1}
+\frac{6H_{n,2}}{n-1} -\frac{8}{n-1}
-\frac{4(H_{n,1}-1)^{2}}{(n-1)^{2}}
\\ &\xrightarrow{n\to \infty} &\frac{2\pi^{2}}{3}-4 .
\end{array}
\ee
\end{lemma}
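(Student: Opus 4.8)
The plan is to avoid any new harmonic-sum machinery entirely and instead reduce to quantities already established in this section, via the elementary identity $\var{U^{(n)}-\tau^{(n)}} = \E{(U^{(n)}-\tau^{(n)})^{2}} - \left(\E{U^{(n)}-\tau^{(n)}}\right)^{2}$. Both ingredients are in hand: the second moment is Eq. \eqref{eqUnmTaun2} of Lemma \ref{lemUnmTaun2}, namely $\E{(U^{(n)}-\tau^{(n)})^{2}}=\frac{4n+2}{n-1}H_{n,2}-\frac{2H_{n,1}^{2}}{n-1}-\frac{4H_{n,1}}{n-1}$, and the mean is $\E{U^{(n)}-\tau^{(n)}}=2(n-H_{n,1})/(n-1)$, recorded as item $5$ of Section \ref{secYuleMomentsSummary} and due to \citep{SSagKBar2012art}. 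So the entire task is to substitute these two formulas and collect terms into the claimed closed form.

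First I would separate the leading constant out of the second moment by writing the prefactor as $\frac{4n+2}{n-1}=4+\frac{6}{n-1}$, so that $\frac{4n+2}{n-1}H_{n,2}=4H_{n,2}+\frac{6H_{n,2}}{n-1}$; this isolates the only term that survives as $n\to\infty$. Next I would expand the squared mean after the shift $n-H_{n,1}=(n-1)-(H_{n,1}-1)$, which gives $\left(\frac{2(n-H_{n,1})}{n-1}\right)^{2}=4-\frac{8(H_{n,1}-1)}{n-1}+\frac{4(H_{n,1}-1)^{2}}{(n-1)^{2}}$. Subtracting this from the second moment, the two standalone constants combine as $4H_{n,2}-4=4(H_{n,2}-1)$, the term $\frac{4(H_{n,1}-1)^{2}}{(n-1)^{2}}$ appears with the sign shown in Eq. \eqref{eqvarUnmTaun}, and the $H_{n,1}$-linear pieces of order $(n-1)^{-1}$ recombine via $-\frac{4H_{n,1}}{n-1}+\frac{8(H_{n,1}-1)}{n-1}=\frac{4H_{n,1}}{n-1}-\frac{8}{n-1}$, leaving precisely the six summands of the stated formula together with the surviving $\frac{6H_{n,2}}{n-1}$.

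For the limit I would simply observe that $H_{n,2}\to\pi^{2}/6$, so $4(H_{n,2}-1)\to 4(\pi^{2}/6-1)=\frac{2\pi^{2}}{3}-4$, while every other summand carries a factor $(n-1)^{-1}$ or $(n-1)^{-2}$ against at most $H_{n,1}^{2}=\Theta\!\left((\ln n)^{2}\right)$ and hence vanishes. There is no conceptual obstacle at any step; the only thing to watch is the sign bookkeeping when expanding the shifted square and merging the $(n-1)^{-1}$ contributions, which is exactly where the sign of $\frac{4H_{n,1}}{n-1}$ flips relative to its appearance in Lemma \ref{lemUnmTaun2}.
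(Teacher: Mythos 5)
Your proposal is correct and coincides with the paper's own proof: both compute $\var{U^{(n)}-\tau^{(n)}}=\E{(U^{(n)}-\tau^{(n)})^{2}}-\left(\E{U^{(n)}-\tau^{(n)}}\right)^{2}$ from Lemma \ref{lemUnmTaun2} and the mean $2(n-H_{n,1})/(n-1)$ of \citep{SSagKBar2012art}, expanding the squared mean via $n-H_{n,1}=(n-1)-(H_{n,1}-1)$ exactly as the paper does. Your algebraic bookkeeping, including the sign flip on the $\frac{4H_{n,1}}{n-1}$ term and the limit $4(H_{n,2}-1)\to\frac{2\pi^{2}}{3}-4$, checks out.
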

\begin{proof}
We use the moment results presented at the beginning of this Section.
$$
\begin{array}{l}
\var{U^{(n)}-\tau^{(n)}}
= \E{\left(U^{(n)}-\tau^{(n)}\right)^{2}}
-\left(\E{U^{(n)}-\tau^{(n)}} \right)^{2}
\\ =
\frac{4n+2}{n-1}H_{n,2} -\frac{2H_{n,1}^{2}}{n-1}-\frac{4H_{n,1}}{n-1}
-\left(2(n-H_{n,1})/(n-1) \right)^{2}
\\ =
\frac{4n+2}{n-1}H_{n,2} -\frac{2H_{n,1}^{2}}{n-1}-\frac{4H_{n,1}}{n-1}
-4+\frac{8(H_{n,1}-1)}{n-1}-\frac{4(H_{n,1}-1)^{2}}{(n-1)^{2}}
\\ =
4(H_{n,2}-1)+\frac{6H_{n,2}}{n-1} -\frac{2H_{n,1}^{2}}{n-1}
+\frac{4H_{n,1}-8}{n-1}-\frac{4(H_{n,1}-1)^{2}}{(n-1)^{2}}
\xrightarrow{n\to \infty} \frac{2\pi^{2}}{3}-4
\approx 2.5797.
\end{array}
$$
This equation is compared with simulations in Fig. \ref{fig:fig2Lem7_11}.
\end{proof}

\begin{lemma}\label{lemvarUnmETaunYn}
For a Yule tree with speciation rate $\lambda=1$ we have  
\be\label{eqvarUnmETaunYn}
\begin{array}{l}
\var{U^{(n)}-\E{\tau^{(n)}\vert \mathcal{Y}_{n}}}
= 
\frac{4H_{n,2}}{3} -1
-\frac{16H_{n,2}}{3(n-1)} +\frac{28}{3(n-1)} + \Theta(n^{-2})
\xrightarrow{n\to \infty} \frac{2\pi^{2}}{9}-1
.
\end{array}
\ee
\end{lemma}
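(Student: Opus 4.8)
The plan is to use the elementary identity $\var{X}=\E{X^{2}}-\left(\E{X}\right)^{2}$ with $X=U^{(n)}-\E{\tau^{(n)}\vert\mathcal{Y}_{n}}$. The second moment $\E{X^{2}}$ is precisely the content of Lemma~\ref{lemUnmETaunYn2}, whose explicit rational form is recorded in Eq.~\eqref{eqEUnmtaun2}; thus the only genuinely new ingredient is the mean $\E{X}$. For that I would invoke the tower property: since $U^{(n)}$ is $\mathcal{Y}_{n}$--measurable, $\E{U^{(n)}-\E{\tau^{(n)}\vert\mathcal{Y}_{n}}}=\E{U^{(n)}}-\E{\E{\tau^{(n)}\vert\mathcal{Y}_{n}}}=\E{U^{(n)}-\tau^{(n)}}$, which equals $2(n-H_{n,1})/(n-1)$ by \cite{SSagKBar2012art} (item~5 of Section~\ref{secYuleMomentsSummary}). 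Squaring gives $\left(\E{X}\right)^{2}=4(n-H_{n,1})^{2}/(n-1)^{2}$.

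Next I would subtract. Writing the exact second moment from Eq.~\eqref{eqEUnmtaun2} and the squared mean over the common denominator $3n(n-1)^{2}$, the squared mean contributes the numerator $12n^{3}-24n^{2}H_{n,1}+12nH_{n,1}^{2}$. The crucial feature is that all $H_{n,1}^{2}$-- and all $n^{2}H_{n,1}$--terms cancel exactly against the corresponding terms of the second moment, so the resulting numerator is $4n^{3}H_{n,2}-3n^{3}-24n^{2}H_{n,2}+34n^{2}-16nH_{n,2}+13n-8H_{n,1}$, i.e.\ a polynomial in $n$ with $H_{n,2}$ coefficients together with a single harmless lone $-8H_{n,1}$ term of order $n^{-3}\ln n$ after division. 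Dividing by $3n(n-1)^{2}$ and subtracting the leading constant part $\tfrac{4H_{n,2}}{3}-1$ leaves the remainder numerator $-16n^{2}H_{n,2}+28n^{2}-20nH_{n,2}+16n-8H_{n,1}$ over $3n(n-1)^{2}$.

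The one point requiring care is matching the stated $(n-1)^{-1}$ form rather than an $n^{-1}$ form: the dominant remainder term simplifies to $\tfrac{n(-16H_{n,2}+28)}{3(n-1)^{2}}$, and splitting $\tfrac{n}{(n-1)^{2}}=\tfrac{1}{n-1}+\tfrac{1}{(n-1)^{2}}$ extracts exactly $-\tfrac{16H_{n,2}}{3(n-1)}+\tfrac{28}{3(n-1)}$ while the leftover $\tfrac{-16H_{n,2}+28}{3(n-1)^{2}}$, the term $\tfrac{-20H_{n,2}+16}{3(n-1)^{2}}$, and the $-8H_{n,1}/(3n(n-1)^{2})=o(n^{-2})$ contribution are all absorbed into $\Theta(n^{-2})$ (here using $H_{n,2}\to\pi^{2}/6$ to bound these coefficients). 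This yields the claimed finite--$n$ expression.

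The main obstacle is therefore purely bookkeeping: verifying the exact cancellation of the $H_{n,1}$--dependent terms and correctly apportioning the subleading terms into the $\Theta(n^{-2})$ bucket via the $\tfrac{n}{(n-1)^{2}}$ splitting. The limit $\tfrac{2\pi^{2}}{9}-1$ is then immediate, since $H_{n,2}\to\pi^{2}/6$ drives the leading part $\tfrac{4H_{n,2}}{3}-1$ to $\tfrac{4}{3}\cdot\tfrac{\pi^{2}}{6}-1$ while the $(n-1)^{-1}$ corrections vanish; this also furnishes a consistency check against combining the asymptotic value $\tfrac{2\pi^{2}}{9}+3$ of Lemma~\ref{lemUnmETaunYn2} with the limiting squared mean $4$.
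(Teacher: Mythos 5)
Your proposal is correct and follows essentially the same route as the paper: both apply $\var{X}=\E{X^{2}}-\left(\E{X}\right)^{2}$ with the second moment taken from Lemma \ref{lemUnmETaunYn2} (Eq. \eqref{eqEUnmtaun2}) and the mean reduced via the tower property to $\E{U^{(n)}-\tau^{(n)}}=2(n-H_{n,1})/(n-1)$ from \cite{SSagKBar2012art}, followed by the same cancellation of the $H_{n,1}^{2}$-- and $n^{2}H_{n,1}$--terms. Your exact residual $\frac{-36H_{n,2}+44}{3(n-1)^{2}}-\frac{8H_{n,1}}{3n(n-1)^{2}}$ agrees with the paper's $-\frac{12H_{n,2}}{(n-1)^{2}}+\frac{44}{3(n-1)^{2}}-\frac{8H_{n,1}}{3n(n-1)^{2}}$, so the bookkeeping, the $\Theta(n^{-2})$ classification, and the limit are all verified.
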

\begin{proof}
We begin by rewriting
$$
\begin{array}{l}
\var{U^{(n)}-\E{\tau^{(n)}\vert \mathcal{Y}_{n}}} =
\E{\left(U^{(n)}-\E{\tau^{(n)}\vert \mathcal{Y}_{n}}\right)^{2}}
-\left(\E{U^{(n)}-\E{\tau^{(n)}\vert \mathcal{Y}_{n}}}\right)^{2}
\\ \stackrel{Lemma~\ref{lemUnmETaunYn2},~\text{\cite{SSagKBar2012art}}}{=}
\\
\frac{4n^{3}H_{n,2}}{3n(n-1)^{2}} 
+\frac{9n^{3}}{3n(n-1)^{2}} 
-\frac{24n^{2}H_{n,1}}{3n(n-1)^{2}} 
-\frac{24n^{2}H_{n,2}}{3n(n-1)^{2}} 
+\frac{34n^{2}}{3n(n-1)^{2}} 
+\frac{12nH_{n,1}^{2}}{3n(n-1)^{2}} 
-\frac{16nH_{n,2}}{3n(n-1)^{2}} 
\\+\frac{13n}{3n(n-1)^{2}} 
-\frac{8H_{n,1}}{3n(n-1)^{2}} 
-\left(\frac{2(n-H_{n,1})}{(n-1)} \right)^{2}
\end{array}
$$
$$
\begin{array}{l}
= 
\frac{4H_{n,2}}{3} -1
-\frac{16H_{n,2}}{3(n-1)} 
+\frac{28}{3(n-1)} 
-\frac{12H_{n,2}}{(n-1)^{2}} 
+\frac{44}{3(n-1)^{2}} 
-\frac{8H_{n,1}}{3n(n-1)^{2}} 
\xrightarrow{n\to \infty} \frac{2\pi^{2}}{9}-1
\approx 1.193.
\end{array}
$$
This equation is compared with simulations in Fig. \ref{fig:fig2Lem7_11}.
\end{proof}

\begin{lemma}\label{lemcovTaunETaunYn}
For a Yule tree with speciation rate $\lambda=1$ we have  
\be\label{eqcovTaunETaunYn}
\begin{array}{l}
\cov{\tau^{(n)}}{\E{\tau^{(n)}\vert \mathcal{Y}_{n}}}
=
3 - \frac{5}{3} H_{n,2}-\frac{28H_{n,2}}{3(n-1)} +\Theta(n^{-1}) 
\xrightarrow{n\to \infty} 3- \frac{5\pi^{2}}{18},
\\
\corr{\tau^{(n)}}{\E{\tau^{(n)}\vert \mathcal{Y}_{n}}}
= 
\sqrt{\frac{3}{H_{n,2}}-\frac{5}{3}}+\Theta(n^{-1})
\xrightarrow{n\to \infty} 
\sqrt{\frac{18}{\pi^{2}}-\frac{5}{3}}.
\end{array}
\ee
\end{lemma}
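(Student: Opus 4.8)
The plan is to exploit that $\E{\tau^{(n)}\vert \mathcal{Y}_{n}}$ is the $L^{2}$--projection of $\tau^{(n)}$ onto the $\mathcal{Y}_{n}$--measurable random variables, which collapses the covariance onto a quantity already in hand. First I would record the elementary identity that for any square--integrable $X$ and sub--$\sigma$--field $\mathcal{G}$ one has $\cov{X}{\E{X\vert \mathcal{G}}} = \var{\E{X\vert \mathcal{G}}}$. This follows from the tower property: since $\E{X\vert \mathcal{G}}$ is $\mathcal{G}$--measurable, $\E{X\cdot \E{X\vert \mathcal{G}}} = \E{\E{X\vert \mathcal{G}}^{2}}$, while $\E{\E{X\vert \mathcal{G}}}=\E{X}$, so subtracting $\left(\E{X}\right)^{2}$ from each gives the identity. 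Taking $X=\tau^{(n)}$ and $\mathcal{G}=\mathcal{Y}_{n}$ yields at once
\[
\cov{\tau^{(n)}}{\E{\tau^{(n)}\vert \mathcal{Y}_{n}}} = \var{\E{\tau^{(n)}\vert \mathcal{Y}_{n}}},
\]
so the first displayed formula is precisely Lemma \ref{lemvarTaunYn}, the explicit $\tfrac{40}{3(n-1)}$ term there being absorbed into the stated $\Theta(n^{-1})$.

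For the correlation I would substitute the same identity into the definition. Since the covariance equals $\var{\E{\tau^{(n)}\vert \mathcal{Y}_{n}}}$, one factor of $\sqrt{\var{\E{\tau^{(n)}\vert \mathcal{Y}_{n}}}}$ cancels in the denominator, leaving
\[
\corr{\tau^{(n)}}{\E{\tau^{(n)}\vert \mathcal{Y}_{n}}} = \sqrt{\frac{\var{\E{\tau^{(n)}\vert \mathcal{Y}_{n}}}}{\var{\tau^{(n)}}}}.
\]
It then remains to insert Lemma \ref{lemvarTaunYn} in the numerator and Lemma \ref{lemvarTaun} in the denominator and expand the ratio. The denominator has leading term $H_{n,2}$, the numerator is $3-\tfrac{5}{3}H_{n,2}+\Theta(n^{-1})$, so the ratio equals $\tfrac{3}{H_{n,2}}-\tfrac{5}{3}$ up to a vanishing correction, and taking the square root produces the claimed formula together with the limit $\sqrt{18/\pi^{2}-5/3}$ after using $H_{n,2}\to\pi^{2}/6$.

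The argument is essentially free once the projection identity is noticed, so there is no structural obstacle---only asymptotic bookkeeping. The one place that warrants care is the error term in the correlation. Because Lemma \ref{lemvarTaun} gives $\var{\tau^{(n)}}=H_{n,2}+\Theta(n^{-1}\ln^{2}n)$, the dominant lower--order contribution coming from the $H_{n,1}^{2}/(n-1)$ term and carrying a logarithmic factor, I would expand $1/\var{\tau^{(n)}}=H_{n,2}^{-1}\bigl(1-\Theta(n^{-1}\ln^{2}n)\bigr)$ and then $\sqrt{\,\cdot\,}$ to first order, both maps being smooth and bounded away from their singularities for large $n$ since $\tfrac{3}{H_{n,2}}-\tfrac{5}{3}$ tends to a positive constant. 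I would then track exactly which terms survive and confirm the order recorded in the statement, paying attention to whether the $H_{n,1}^{2}$ logarithmic factor is genuinely swallowed by the claimed $\Theta(n^{-1})$ or whether it should be stated as $\Theta(n^{-1}\ln^{2}n)$. This verification is the only slightly delicate step, and it is routine.
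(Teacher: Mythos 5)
Your proposal is correct and takes essentially the same route as the paper: the paper's own proof likewise uses the tower property to reduce the covariance to $\E{\left(\E{\tau^{(n)}\vert \mathcal{Y}_{n}}\right)^{2}}-\left(\E{\tau^{(n)}}\right)^{2}=\var{\E{\tau^{(n)}\vert \mathcal{Y}_{n}}}$ (re-deriving the content of Lemma \ref{lemvarTaunYn} in place rather than citing it), and then forms the correlation from Lemmata \ref{lemvarTaun} and \ref{lemvarTaunYn} exactly as you propose. Your closing caution is also well founded: since $\var{\tau^{(n)}}=H_{n,2}+\Theta(n^{-1}\ln^{2}n)$, the correlation's error term should strictly be $\Theta(n^{-1}\ln^{2}n)$ rather than the stated $\Theta(n^{-1})$ --- the paper's own intermediate display carries the $\Theta(n^{-1}\ln^{2}n)$ factor before it is silently absorbed, and the analogous Lemma \ref{lemcovUnmTaunTaunmETaunYn} does record $\Theta(n^{-1}\ln^{2}n)$.
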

\begin{proof}
$$
\begin{array}{l}
\cov{\tau^{(n)}}{\E{\tau^{(n)}\vert \mathcal{Y}_{n}}}
=\E{\tau^{(n)}\E{\tau^{(n)}\vert \mathcal{Y}_{n}}}-
\E{\tau^{(n)}}\E{\E{\tau^{(n)}\vert \mathcal{Y}_{n}}}
\\=\E{\E{\tau^{(n)}\E{\tau^{(n)}\vert \mathcal{Y}_{n}}\vert \mathcal{Y}_{n}}}-\left(\E{\tau^{(n)}}\right)^{2}
= \E{\E{\tau^{(n)}\vert \mathcal{Y}_{n}}\E{\tau^{(n)}\vert \mathcal{Y}_{n}}}-\left(\E{\tau^{(n)}}\right)^{2}
\\= \E{\left(\E{\tau^{(n)}\vert \mathcal{Y}_{n}}\right)^{2}}-\left(\E{\tau^{(n)}}\right)^{2}
\end{array}
$$
$$
\begin{array}{l}
\stackrel{Lemma~ \ref{lemETaunYn2},~\text{\cite{SSagKBar2012art}}}{=}
\\ 
H_{n,1}^{2}-4H_{n,1}- \frac{5H_{n,2}}{3} +7
+\frac{4H_{n,1}^{2}}{n}
-\frac{12H_{n,1}}{n-1}
-\frac{28H_{n,2}}{3(n-1)} 
+\frac{64}{3(n-1)}
+\frac{8H_{n,1}^{2}}{(n-1)^{2}} 
\\-\frac{8H_{n,1}}{(n-1)^{2}} 
-\frac{12H_{n,2}}{(n-1)^{2}} 
+\frac{56}{3(n-1)^{2}} 
-\frac{4H_{n,1}^{2}}{n(n-1)^{2}}
-\frac{8H_{n,1}}{3n(n-1)^{2}} 
-\left(\frac{n+1}{n-1}H_{n,1}-\frac{2n}{n-1} \right)^{2}
\end{array}
$$

$$
\begin{array}{l}
=
3-\frac{5H_{n,2}}{3}
-\frac{28H_{n,2}}{3(n-1)} 
+\frac{40}{3(n-1)}
-\frac{12H_{n,2}}{(n-1)^{2}} 
+\frac{44}{3(n-1)^{2}}
-\frac{8H_{n,1}}{3n(n-1)^{2}} 
\xrightarrow{n\to \infty} 3- \frac{5\pi^{2}}{18}
\approx 0.258.
\end{array}
$$

$$
\begin{array}{l}
\corr{\tau^{(n)}}{\E{\tau^{(n)}\vert \mathcal{Y}_{n}}}
=
\frac{\cov{\tau^{(n)}}{\E{\tau^{(n)}\vert \mathcal{Y}_{n}}}}
{\sqrt{(\var{\tau^{(n)}})(\var{\E{\tau^{(n)}\vert \mathcal{Y}_{n}}})}}
\\ \stackrel{Lemmata~\ref{lemvarTaun},\ref{lemvarTaunYn}}{=}
\frac{3 - \frac{5}{3} H_{n,2} +\Theta(n^{-1})}
{\sqrt{(H_{n,2}+\Theta(n^{-1}\ln^{2}n))
(3-\frac{5}{3}H_{n,2}+\Theta(n^{-1}))}}
=
\frac{3 - \frac{5}{3} H_{n,2}}{\sqrt{H_{n,2}(3 - \frac{5}{3} H_{n,2})}}
+\Theta(n^{-1})
 \\=\sqrt{\frac{3}{H_{n,2}}-\frac{5}{3}}+\Theta(n^{-1})
\xrightarrow{n\to \infty} 
\sqrt{\frac{18}{\pi^{2}}-\frac{5}{3}}
\approx 0.396.
\end{array}
$$
These equations are compared with simulations in Fig. \ref{fig:fig3Lem12_14}.
\end{proof}

\begin{lemma}\label{lemcovUnmTaunUnmETaunYn}
For a Yule tree with speciation rate $\lambda=1$ we have  
\be\label{eqcovUnmTaunUnmETaunYn}
\begin{array}{l}
\cov{U^{(n)}-\tau^{(n)}}{U^{(n)}-\E{\tau^{(n)}\vert \mathcal{Y}_{n}}}
\frac{4H_{n,2}}{3} -1 +\Theta(n^{-1})
\xrightarrow{n\to \infty} 
\frac{2\pi^{2}}{9}-1,
\\
\corr{U^{(n)}-\tau^{(n)}}{U^{(n)}-\E{\tau^{(n)}\vert \mathcal{Y}_{n}}}
= \sqrt{
\frac{\frac{4}{3}H_{n,2} -1}{
4H_{n,2}-4
}}
+\Theta(n^{-1})
\xrightarrow{n\to \infty}  
\sqrt{
\frac{\frac{\pi^{2}}{12} -1}{
\frac{2\pi^{2}}{3}-4
}}.
\end{array}
\ee
\\
\end{lemma}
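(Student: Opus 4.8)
The plan is to exploit the structural fact, already observed in Section~\ref{secYuleRVs}, that $U^{(n)}$ is $\mathcal{Y}_{n}$--measurable, so that
$$
U^{(n)}-\E{\tau^{(n)}\vert \mathcal{Y}_{n}}
= \E{U^{(n)}-\tau^{(n)}\,\vert\, \mathcal{Y}_{n}}.
$$
In other words, abbreviating $A=U^{(n)}-\tau^{(n)}$ and $B=\E{A\,\vert\,\mathcal{Y}_{n}}=U^{(n)}-\E{\tau^{(n)}\vert \mathcal{Y}_{n}}$, the second argument of the covariance is exactly the $\mathcal{Y}_{n}$--conditional mean of the first. The whole lemma then rests on the elementary identity $\cov{A}{B}=\var{B}$, valid whenever $B=\E{A\,\vert\,\mathcal{Y}_{n}}$, combined with the variance formulae already established.

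First I would record the two consequences of the tower property. Because $B$ is $\mathcal{Y}_{n}$--measurable,
$$
\E{AB}=\E{\E{AB\,\vert\,\mathcal{Y}_{n}}}=\E{B\,\E{A\,\vert\,\mathcal{Y}_{n}}}=\E{B^{2}},
$$
which is precisely Eq.~\eqref{eqEUnmTaunUnmETaunYn2} derived inside the proof of Lemma~\ref{lemtaunmETaunYn2}; and $\E{B}=\E{\E{A\,\vert\,\mathcal{Y}_{n}}}=\E{A}$. Subtracting gives
$$
\cov{A}{B}=\E{AB}-\E{A}\E{B}=\E{B^{2}}-\E{B}^{2}=\var{B}=\var{U^{(n)}-\E{\tau^{(n)}\vert \mathcal{Y}_{n}}}.
$$
The first line of~\eqref{eqcovUnmTaunUnmETaunYn} is then read off directly from Lemma~\ref{lemvarUnmETaunYn}, whose leading behaviour is $\tfrac{4}{3}H_{n,2}-1+\Theta(n^{-1})$ with limit $\tfrac{2\pi^{2}}{9}-1$.

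For the correlation the same identity collapses the defining ratio:
$$
\corr{A}{B}=\frac{\cov{A}{B}}{\sqrt{\var{A}\var{B}}}
=\frac{\var{B}}{\sqrt{\var{A}\var{B}}}
=\sqrt{\frac{\var{B}}{\var{A}}}.
$$
I would then insert $\var{A}=\var{U^{(n)}-\tau^{(n)}}$ from Lemma~\ref{lemvarUnmTaun} and $\var{B}$ from Lemma~\ref{lemvarUnmETaunYn}, retain the leading terms $4H_{n,2}-4$ and $\tfrac{4}{3}H_{n,2}-1$, and pass to the limit via $H_{n,2}\to\pi^{2}/6$. Since $\tfrac{4}{3}\cdot\tfrac{\pi^{2}}{6}=\tfrac{2\pi^{2}}{9}$, the numerator tends to $\tfrac{2\pi^{2}}{9}-1$ and the denominator to $\tfrac{2\pi^{2}}{3}-4$, giving the limiting correlation $\sqrt{(\tfrac{2\pi^{2}}{9}-1)/(\tfrac{2\pi^{2}}{3}-4)}$.

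The computations are routine; the only delicate point is the propagation of the error terms through the square root. Both variances have the shape (bounded leading term)$+$(lower--order correction), and the denominator $4H_{n,2}-4\to\tfrac{2\pi^{2}}{3}-4>0$ stays bounded away from zero for large $n$. A first--order expansion of $x\mapsto\sqrt{x}$ about the leading ratio therefore shows that the small relative perturbations of numerator and denominator contribute only an additive lower--order term to the correlation, which is the claimed remainder. This error bookkeeping is the main, and essentially the only, obstacle.
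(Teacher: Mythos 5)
Your proof is correct and follows essentially the same route as the paper: the paper invokes the law of total covariance to reduce $\cov{U^{(n)}-\tau^{(n)}}{U^{(n)}-\E{\tau^{(n)}\vert \mathcal{Y}_{n}}}$ to $\var{U^{(n)}-\E{\tau^{(n)}\vert \mathcal{Y}_{n}}}$, which is exactly your tower-property identity $\cov{A}{B}=\var{B}$ for $B=\E{A\vert\mathcal{Y}_{n}}$, and then both arguments plug in Lemmata \ref{lemvarUnmETaunYn} and \ref{lemvarUnmTaun} in the same way. Note that your limiting correlation $\sqrt{\bigl(\tfrac{2\pi^{2}}{9}-1\bigr)/\bigl(\tfrac{2\pi^{2}}{3}-4\bigr)}$ agrees with the paper's own proof (which gives $\approx 0.680$), so the $\tfrac{\pi^{2}}{12}$ appearing in the lemma's displayed statement is a typo that your derivation silently corrects.
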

\begin{proof}
We will use the law of total covariance
$$
\begin{array}{l}
\cov{U^{(n)}-\tau^{(n)}}{U^{(n)}-\E{\tau^{(n)}\vert \mathcal{Y}_{n}}}
\\=
\cov{\E{U^{(n)}-\tau^{(n)}\vert \mathcal{Y}_{n}}}{\E{U^{(n)}-\E{\tau^{(n)}\vert \mathcal{Y}_{n}}\vert \mathcal{Y}_{n}}}
\\+
\E{\cov{U^{(n)}-\tau^{(n)}}{\left(U^{(n)}-\E{\tau^{(n)}\vert \mathcal{Y}_{n}}\right)\vert \mathcal{Y}_{n}}}
\\=  
\var{U^{(n)}-\E{\tau^{(n)}\vert \mathcal{Y}_{n}}} +
\E{\E{\left(U^{(n)}-\tau^{(n)}\right)\left(U^{(n)}-\E{\tau^{(n)}\vert \mathcal{Y}_{n}}\right)\vert \mathcal{Y}_{n}}}
\\
-
\E{\E{U^{(n)}-\tau^{(n)}\vert \mathcal{Y}_{n}}
\E{\left(U^{(n)}-\E{\tau^{(n)}\vert \mathcal{Y}_{n}}\right)\vert \mathcal{Y}_{n}}}
\\=  
\var{U^{(n)}-\E{\tau^{(n)}\vert \mathcal{Y}_{n}}} +
\E{\left(\E{U^{(n)}-\tau^{(n)}\vert \mathcal{Y}_{n}}\right)^{2}}-\E{\left(\E{U^{(n)}-\tau^{(n)}\vert \mathcal{Y}_{n}}\right)^{2}}
\\
= \var{U^{(n)}-\E{\tau^{(n)}\vert \mathcal{Y}_{n}}} 
\stackrel{Lemma~\ref{lemvarUnmETaunYn}}{=}
\frac{4H_{n,2}}{3} -1
-\frac{16H_{n,2}}{3(n-1)} +\frac{28}{3(n-1)}  \Theta(n^{-2})
\\ \xrightarrow{n\to \infty} \frac{2\pi^{2}}{9}-1
\approx 1.193.
\end{array}
$$
Then using the above, Lemmata \ref{lemvarUnmETaunYn}, and \ref{lemvarUnmTaun} we can write
$$
\begin{array}{l}
\corr{U^{(n)}-\tau^{(n)}}{U^{(n)}-\E{\tau^{(n)}\vert \mathcal{Y}_{n}}}
=
\frac{\cov{U^{(n)}-\tau^{(n)}}{U^{(n)}-\E{\tau^{(n)}\vert \mathcal{Y}_{n}}}}
{\sqrt{\var{U^{(n)}-\tau^{(n)}}}\sqrt{\var{U^{(n)}-\E{\tau^{(n)}\vert \mathcal{Y}_{n}}}}}
\\ =\frac{\var{U^{(n)}-\E{\tau^{(n)}\vert \mathcal{Y}_{n}}}}{\sqrt{\var{U^{(n)}-\tau^{(n)}}}\sqrt{\var{U^{(n)}-\E{\tau^{(n)}\vert\mathcal{Y}_{n}}}}}
=
\frac{\frac{4H_{n,2}}{3} -1 + \Theta(n^{-1})}{
\sqrt{4H_{n,2}-4+\Theta(n^{-1}\ln^{2}n)}
\sqrt{\frac{4H_{n,2}}{3} -1 + \Theta(n^{-1})
}}
\\=
\sqrt{
\frac{\frac{4}{3}H_{n,2} -1}{
4H_{n,2}-4
}}
+\Theta(n^{-1})
\xrightarrow{n\to \infty}  
\sqrt{
\frac{\frac{2*\pi^{2}}{9} -1}{
\frac{2\pi^{2}}{3}-4
}}
\approx 0.680.
\end{array}
$$
These equations are compared with simulations in Fig. \ref{fig:fig3Lem12_14}.
\end{proof}

\begin{lemma}\label{lemcovUnmTaunTaunmETaunYn}
For a Yule tree with speciation rate $\lambda=1$ we have  
\be\label{eqcovUnmTaunTaunmETaunYn}
\begin{array}{l}
\cov{U^{(n)}-\tau^{(n)}}{\tau^{(n)}-\E{\tau^{(n)}\vert \mathcal{Y}_{n}}}
= 
3-\frac{8}{3}H_{n,2} + \Theta(n^{-1}\ln^{2} n) \xrightarrow{n\to \infty} 3-\frac{4\pi^{2}}{9},
\\
\corr{U^{(n)}-\tau^{(n)}}{\tau^{(n)}-\E{\tau^{(n)}\vert \mathcal{Y}_{n}}}
=
-\sqrt{\frac{\frac{8}{3}H_{n,2}-3}{4(H_{n,2}-1)}} + \Theta(n^{-1}\ln^{2} n)
\xrightarrow{n\to \infty}
-\sqrt{\frac{\frac{4\pi^{2}}{9}-3}{\frac{2\pi^{2}}{3}-4}}.
\end{array}
\ee
\end{lemma}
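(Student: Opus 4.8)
The plan is to exploit the fact that the second factor, $\tau^{(n)}-\E{\tau^{(n)}\vert \mathcal{Y}_{n}}$, has not only unconditional mean zero but, crucially, conditional mean zero given $\mathcal{Y}_{n}$, since $\E{\tau^{(n)}-\E{\tau^{(n)}\vert \mathcal{Y}_{n}}\vert \mathcal{Y}_{n}}=0$. First I would invoke the law of total covariance,
$$
\begin{array}{l}
\cov{U^{(n)}-\tau^{(n)}}{\tau^{(n)}-\E{\tau^{(n)}\vert \mathcal{Y}_{n}}}
\\ =
\cov{\E{U^{(n)}-\tau^{(n)}\vert \mathcal{Y}_{n}}}{\E{\tau^{(n)}-\E{\tau^{(n)}\vert \mathcal{Y}_{n}}\vert \mathcal{Y}_{n}}}
\\ +
\E{\cov{U^{(n)}-\tau^{(n)}}{\tau^{(n)}-\E{\tau^{(n)}\vert \mathcal{Y}_{n}}\vert \mathcal{Y}_{n}}}.
\end{array}
$$
The ``between'' term vanishes because the second conditional expectation is identically $0$. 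In the ``within'' term, conditioning on $\mathcal{Y}_{n}$ turns both $U^{(n)}$ and $\E{\tau^{(n)}\vert \mathcal{Y}_{n}}$ into constants, so the conditional covariance collapses to $\cov{-\tau^{(n)}}{\tau^{(n)}\vert \mathcal{Y}_{n}}=-\var{\tau^{(n)}\vert \mathcal{Y}_{n}}$. Hence the whole covariance equals $-\E{\var{\tau^{(n)}\vert \mathcal{Y}_{n}}}$.

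Next I would eliminate the conditional variance by the law of total variance, $\var{\tau^{(n)}}=\E{\var{\tau^{(n)}\vert \mathcal{Y}_{n}}}+\var{\E{\tau^{(n)}\vert \mathcal{Y}_{n}}}$, which rearranges to
$$
\cov{U^{(n)}-\tau^{(n)}}{\tau^{(n)}-\E{\tau^{(n)}\vert \mathcal{Y}_{n}}} = \var{\E{\tau^{(n)}\vert \mathcal{Y}_{n}}} - \var{\tau^{(n)}}.
$$
Both right-hand quantities are already established: Lemma \ref{lemvarTaunYn} gives $\var{\E{\tau^{(n)}\vert \mathcal{Y}_{n}}}=3-\tfrac{5}{3}H_{n,2}+\Theta(n^{-1})$ and Lemma \ref{lemvarTaun} gives $\var{\tau^{(n)}}=\tfrac{n+1}{n-1}H_{n,2}+\ldots=H_{n,2}+\Theta(n^{-1}\ln^{2}n)$. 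Subtracting yields $3-\tfrac{5}{3}H_{n,2}-H_{n,2}=3-\tfrac{8}{3}H_{n,2}$ to leading order, with limit $3-\tfrac{4\pi^{2}}{9}$ as $H_{n,2}\to\pi^{2}/6$.

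For the correlation I would divide by the two standard deviations. The denominator variances are supplied by Lemma \ref{lemvarUnmTaun}, namely $\var{U^{(n)}-\tau^{(n)}}=4(H_{n,2}-1)+\ldots$, and, since $\tau^{(n)}-\E{\tau^{(n)}\vert \mathcal{Y}_{n}}$ has mean zero, by Lemma \ref{lemtaunmETaunYn2}, namely $\var{\tau^{(n)}-\E{\tau^{(n)}\vert \mathcal{Y}_{n}}}=\tfrac{8}{3}H_{n,2}-3+\ldots$. Writing the numerator as $-(\tfrac{8}{3}H_{n,2}-3)$ and cancelling one factor of $\sqrt{\tfrac{8}{3}H_{n,2}-3}$ against the denominator produces $-\sqrt{(\tfrac{8}{3}H_{n,2}-3)/(4(H_{n,2}-1))}$, with the stated limit.

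I expect the only real obstacle to be bookkeeping of the error terms rather than any conceptual difficulty; the leading behaviour is immediate from the two reductions above. To certify the $\Theta(n^{-1}\ln^{2}n)$ remainder one must observe that the worst correction comes from the $\tfrac{2(n+1)}{(n-1)^{2}}H_{n,1}^{2}$ contribution in $\var{\tau^{(n)}}$, which is $\Theta(n^{-1}\ln^{2}n)$ because $H_{n,1}\sim\ln n$, whereas all remaining correction terms in the four cited lemmata are $\Theta(n^{-1})$ or smaller; the division performed for the correlation preserves this order.
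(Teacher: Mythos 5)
Your proposal is correct, and for the covariance it takes a genuinely different route from the paper's. Both arguments open with the law of total covariance and kill the between--group term via $\E{\tau^{(n)}-\E{\tau^{(n)}\vert\mathcal{Y}_{n}}\vert\mathcal{Y}_{n}}=0$, but from there the paper expands the within--group term as $\E{(U^{(n)}-\tau^{(n)})(\tau^{(n)}-\E{\tau^{(n)}\vert\mathcal{Y}_{n}})}$, inserts $\pm U^{(n)}$, and invokes the identities in Eqs. \eqref{eqEUnmTaunUnmETaunYn2} and \eqref{eqEUnmTaunUnmETaunYn2sq} to land on $-\E{(\tau^{(n)}-\E{\tau^{(n)}\vert\mathcal{Y}_{n}})^{2}}$, which it then evaluates by Lemma \ref{lemtaunmETaunYn2}. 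You instead observe that, conditionally on $\mathcal{Y}_{n}$, both $U^{(n)}$ and $\E{\tau^{(n)}\vert\mathcal{Y}_{n}}$ are constants, so the conditional covariance collapses immediately to $-\var{\tau^{(n)}\vert\mathcal{Y}_{n}}$, and the law of total variance then converts $-\E{\var{\tau^{(n)}\vert\mathcal{Y}_{n}}}$ into $\var{\E{\tau^{(n)}\vert\mathcal{Y}_{n}}}-\var{\tau^{(n)}}$, evaluated from Lemmata \ref{lemvarTaunYn} and \ref{lemvarTaun}. The two targets coincide, since $\E{\var{\tau^{(n)}\vert\mathcal{Y}_{n}}}=\E{(\tau^{(n)}-\E{\tau^{(n)}\vert\mathcal{Y}_{n}})^{2}}$ (the conditional mean of $\tau^{(n)}-\E{\tau^{(n)}\vert\mathcal{Y}_{n}}$ being zero), so your derivation is simultaneously a pleasant consistency check between Lemma \ref{lemtaunmETaunYn2} and the pair of variance Lemmata \ref{lemvarTaun}, \ref{lemvarTaunYn}. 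Your route is shorter and more transparent for the covariance---it avoids the $\pm U^{(n)}$ manipulation and the cross--moment identities, and it makes the probabilistic meaning explicit (the covariance is minus the expected within--tree variance of the coalescent time); the paper's route buys direct reuse of the quantity already computed in Lemma \ref{lemtaunmETaunYn2}, needing only one prior lemma for the covariance. For the correlation your argument is identical to the paper's: same numerator, same denominators from Lemmata \ref{lemvarUnmTaun} and \ref{lemtaunmETaunYn2} (with the mean--zero observation), same cancellation of one factor of $\sqrt{\tfrac{8}{3}H_{n,2}-3}$.

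One small inaccuracy in your error bookkeeping: the claim that, apart from the $\frac{2(n+1)}{(n-1)^{2}}H_{n,1}^{2}$ term in $\var{\tau^{(n)}}$, \emph{all} remaining corrections in the four cited lemmata are $\Theta(n^{-1})$ or smaller is false---Lemmata \ref{lemvarUnmTaun} and \ref{lemtaunmETaunYn2} each carry an explicit $-\frac{2H_{n,1}^{2}}{n-1}=\Theta(n^{-1}\ln^{2}n)$ term. This does not damage your conclusion, since the stated remainder is $\Theta(n^{-1}\ln^{2}n)$ and these terms are of exactly that order, but the sentence should be corrected so that the certified bound is justified by noting all corrections are $O(n^{-1}\ln^{2}n)$ rather than by isolating a single worst term.
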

\begin{proof}
We will use the law of total covariance
$$
\begin{array}{l}
\cov{U^{(n)}-\tau^{(n)}}{\tau^{(n)}-\E{\tau^{(n)}\vert \mathcal{Y}_{n}}}
=
\E{\cov{U^{(n)}-\tau^{(n)}}{\left(\tau^{(n)}-\E{\tau^{(n)}\vert \mathcal{Y}_{n}}\right)\vert \mathcal{Y}_{n}}}
\\+
\cov{\E{U^{(n)}-\tau^{(n)}\vert \mathcal{Y}_{n}}}{\E{\tau^{(n)}-\E{\tau^{(n)}\vert \mathcal{Y}_{n}}\vert \mathcal{Y}_{n}}}
\\=  
\E{\E{\left(U^{(n)}-\tau^{(n)}\right)\left(\tau^{(n)}-\E{\tau^{(n)}\vert \mathcal{Y}_{n}}\right)\vert \mathcal{Y}_{n}}}
\\-
\E{
\E{\left(U^{(n)}-\tau^{(n)} \right)\vert \mathcal{Y}_{n} }
\E{\left(\tau^{(n)}-\E{\tau^{(n)}\vert \mathcal{Y}_{n}}\right)\vert \mathcal{Y}_{n}}
} + 0
\\=  
\E{\left(U^{(n)}-\tau^{(n)}\right)\left(\tau^{(n)}-\E{\tau^{(n)}\vert \mathcal{Y}_{n}}\right)}-0
\\=
\E{\left(U^{(n)}-\tau^{(n)}\right)\left(\pm U^{(n)}+\tau^{(n)}-\E{\tau^{(n)}\vert \mathcal{Y}_{n}}\right)}
\\=
-\E{\left(U^{(n)}-\tau^{(n)}\right)^{2}}+
\E{\left(U^{(n)}-\tau^{(n)}\right)\left(U^{(n)}-\E{\tau^{(n)}\vert \mathcal{Y}_{n}}\right)}
\\ \stackrel{Eq.\eqref{eqEUnmTaunUnmETaunYn2}}{=}
\E{\left(\E{U^{(n)}-\tau^{(n)}}\vert \mathcal{Y}_{n}\right)^{2}}
-\E{\left(U^{(n)}-\tau^{(n)}\right)^{2}}
\\ \stackrel{Eq.\eqref{eqEUnmTaunUnmETaunYn2sq}}{=}
-\E{(\tau^{(n)}-\E{\tau^{(n)}\vert \mathcal{Y}_{n}})^{2}}
\\
\stackrel{Lemma~\ref{lemtaunmETaunYn2}}{=}
3-\frac{8}{3}H_{n,2} + \Theta(n^{-1}\ln^{2} n)
\xrightarrow{n\to \infty} 3-\frac{4\pi^{2}}{9} \approx -1.38649 .
\end{array}
$$
Then using the above, Lemmata \ref{lemtaunmETaunYn2} and \ref{lemvarUnmTaun}, and remembering that $\tau^{(n)}-\E{\tau^{(n)}}$ has mean $0$, we can write
$$
\begin{array}{l}
\corr{U^{(n)}-\tau^{(n)}}{\tau^{(n)}-\E{\tau^{(n)}\vert \mathcal{Y}_{n}}}
=
\frac{\cov{U^{(n)}-\tau^{(n)}}{\tau^{(n)}-\E{\tau^{(n)}\vert \mathcal{Y}_{n}}}}
{\sqrt{\var{U^{(n)}-\tau^{(n)}}}{\var{\tau^{(n)}-\E{\tau^{(n)}\vert \mathcal{Y}_{n}}}}}
\\ =
\frac{3-\frac{8}{3}H_{n,2} + \Theta(n^{-1}\ln^{2} n)}{\sqrt{(4(H_{n,2}-1)+\Theta(n^{-1}\ln^{2} n))(\frac{8}{3}H_{n,2}-3+ \Theta(n^{-1}\ln^{2} n))}}
=
\frac{3-\frac{8}{3}H_{n,2}}{\sqrt{(4(H_{n,2}-1))(\frac{8}{3}H_{n,2}-3 )}}
+ \Theta(n^{-1}\ln^{2} n)
\\ =
-\sqrt{\frac{\frac{8}{3}H_{n,2}-3}{4(H_{n,2}-1)}} + \Theta(n^{-1}\ln^{2} n)
\xrightarrow{n\to \infty}
-\sqrt{\frac{\frac{4\pi^{2}}{9}-3}{\frac{2\pi^{2}}{3}-4}} \approx -0.733.
\end{array}
$$
These equations are compared with simulations in Fig. \ref{fig:fig3Lem12_14}.
\end{proof}

\section{Comparison with simulations}\label{secNumericalSimul}
\renewcommand{\theequation}{\thesection.\arabic{equation}}
\renewcommand{\thelemma}{\thesection.\arabic{lemma}}
\renewcommand{\thetheorem}{\thesection.\arabic{theorem}}
\renewcommand{\theremark}{\thesection.\arabic{remark}}
\renewcommand{\theexample}{\thesection.\arabic{example}}
\renewcommand{\thecorollary}{\thesection.\arabic{corollary}}
\renewcommand{\thealgorithm}{\thesection.\arabic{algorithm}}
\setcounter{equation}{0}
\setcounter{lemma}{0}
\setcounter{theorem}{0}
\setcounter{example}{0}
\setcounter{remark}{0}
\setcounter{corollary}{0}
\setcounter{algorithm}{0}

We have 
verified Lemmata \ref{lemUnmTaun2}-\ref{lemcovUnmTaunTaunmETaunYn} by simulations. We sampled pure-birth trees of increasing number of tips using the \texttt{R} package \texttt{TreeSim} \citep{TreeSim2}. For each tree, $U^{(n)}$ is obtained by the summation of $T_1, \cdots, T_n$. 
Using knowledge of a tree's topology, we can calculate $\E{1_{k}^{(n)}\vert \mathcal{Y}_{n}}$, 
which counts the fraction of pairs of nodes that coalesced at the $k$--th speciation event. We can use these proportions to calculate
\[\E{\tau^{(n)} \vert\mathcal{Y}_n} 
=\sum_{k=1}^{n-1}\E{1_{k}^{(n)}\vert \mathcal{Y}_{n}}\sum_{j=k+1}^{n}T_j.\]
The detailed procedure of sampling $\E{\tau^{(n)} \vert\mathcal{Y}_n}$ is explained in Algorithm \ref{simultaun}.
Furthermore, $\tau^{(n)}$ for an already simulated tree can be sampled by first sampling $\kappa_n$ from a categorical distribution with probabilities
\[P(\kappa_n = k) = \E{1_{k}^{(n)}\vert \mathcal{Y}_{n}},\]
for $k = 1, \cdots, n-1$, and then calculating
\[\tau^{(n)} = \sum_{j=\kappa_{n+1}}^{n}T_j.\]
The sampled values $U^{(n)}$, $E\left[\tau^{(n)}|\mathcal{Y}_n\right]$, and $\tau^{(n)}$ can then be used to calculate the left-hand sides of the Lemmata \ref{lemUnmTaun2}-\ref{lemcovUnmTaunTaunmETaunYn}. 

We compare our simulated values of the lemmata with the theoretical functions, i.e., the right-hand sides, which are shown in Figs. \ref{fig:fig1Lem1_6}--\ref{fig:fig3Lem12_14}. We present two cases in each lemma, which shows values for lower number of tips, from 2 to 10 and higher number of tips, which are 25, 100, 250, 500, 750, 1000, 1250, 1500, 1750, 2000, 2250, and 2500.

Figure \ref{fig:fig1Lem1_6} shows the values obtained from the simulation procedures and the theoretical values of Lemmata \ref{lemUnmTaun2} to \ref{lemETaunYn2}. The simulated values follow the theoretical lines closely in the case of low and large number of tips. In particular, Lemma \ref{lemUnmETaunYn2} is converging into $\frac{2\pi^2}{9} + 3 \approx 5.19325$ which is shown by the horizontal blue line.

In Fig. \ref{fig:fig2Lem7_11}, Lemmata \ref{lemtaunmETaunYn2} to \ref{lemvarUnmETaunYn} have an asymptotic value indicated by the horizontal blue line. As the number of tips increases, the theoretical curves of Lemmata \ref{lemtaunmETaunYn2} to \ref{lemvarUnmETaunYn} gets closer to the asymptotic value. The simulated values also follow the theoretical line, and hence the asymptotic value. 

In Fig. \ref{fig:fig3Lem12_14}, both calculated covariance and correlation values of Lemmata \ref{lemcovTaunETaunYn}--\ref{lemcovUnmTaunTaunmETaunYn} are presented. In this case, negative values are allowed and present for the covariance and correlation of Lemma \ref{lemcovUnmTaunTaunmETaunYn}. Figure \ref{fig:fig3Lem12_14} shows that the theoretical function values of Lemmata \ref{lemcovTaunETaunYn}--\ref{lemcovUnmTaunTaunmETaunYn} converge to an asymptotic value, and the simulated values comply with the theoretical curves.

One thing to note is that for the correlation in Lemma \ref{lemcovUnmTaunTaunmETaunYn}, the value for $n=2$ is not defined because one of the terms in the denominator of the formula for $\corr{U^{(n)}-\tau^{(n)}}{\tau^{(n)}-\E{\tau^{(n)}\vert \mathcal{Y}_{n}}}$, $\var{\tau^{(n)}-\E{\tau^{(n)}\vert \mathcal{Y}_{n}}}$, is zero when $n=2$. This is due to the fact that 
\[\tau^{(2)} = T_2,\] 
when $n=2$. At the same time, because there is only one coalescence event for a tree with two tips, then
\[\E{\tau^{(2)} \vert\mathcal{Y}_2} = \E{1_{1}^{(2)}\vert \mathcal{Y}_{2}}T_2 = 1 \cdot T_2 = T_2,\]
which leads to
\[\var{\tau^{(2)}-\E{\tau^{(2)}\vert \mathcal{Y}_{2}}} = \var{0} = 0.\]

\begin{figure}
    \centering
    \includegraphics[width=1.1\textwidth]{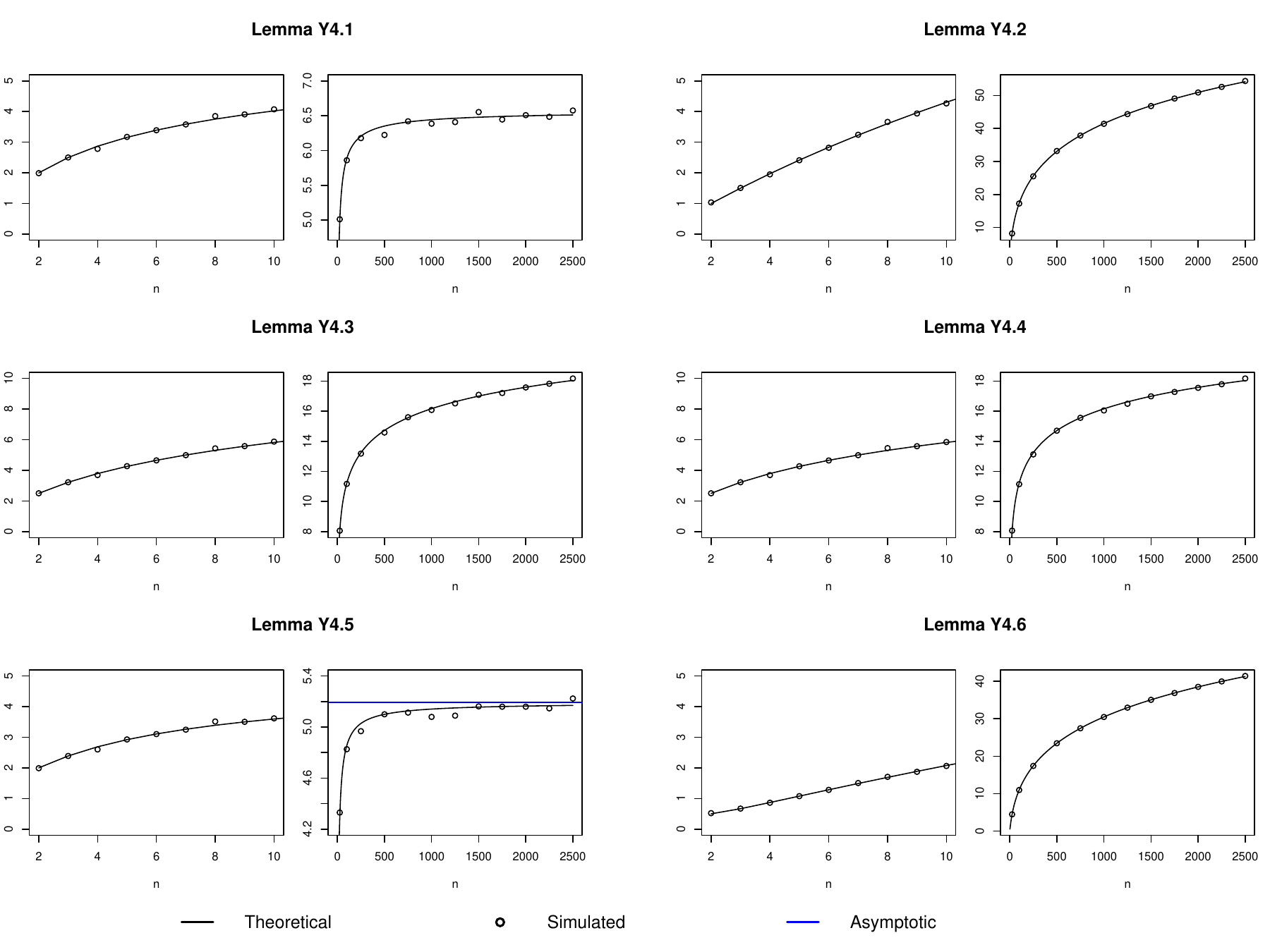}
    \caption{Comparison of simulated and theoretical values of Lemmata \ref{lemUnmTaun2} to \ref{lemETaunYn2}. For each lemma, the left-hand side plot shows values between 2 to 10, while the right-hand side plot shows values between 25 to 2500.}
    \label{fig:fig1Lem1_6}
\end{figure}

\begin{figure}
    \centering
    \includegraphics[width=1.1\textwidth]{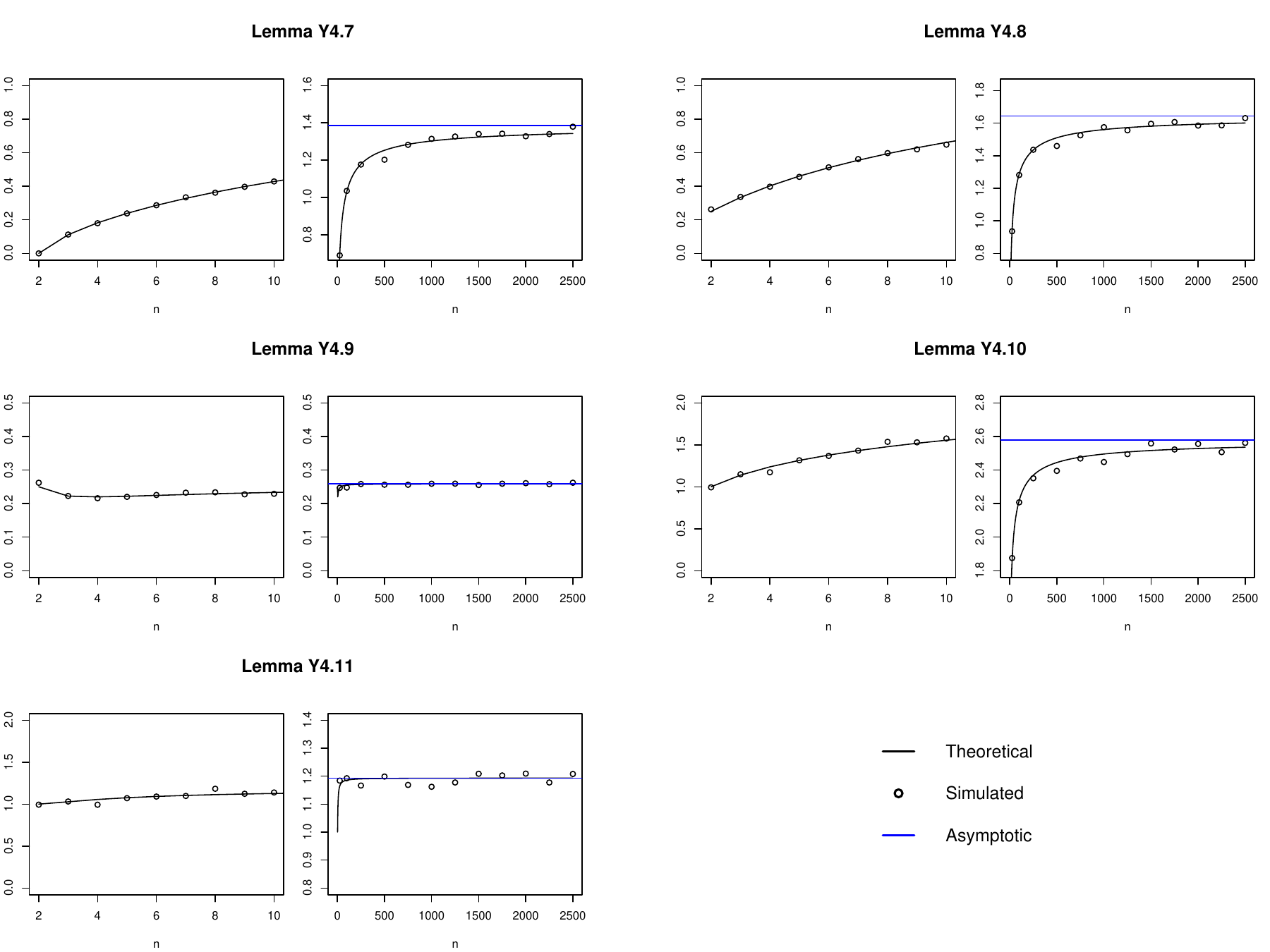}
    \caption{Comparison of simulated and theoretical values of Lemmata \ref{lemtaunmETaunYn2} to \ref{lemvarUnmETaunYn}. For each lemma, the left-hand side plot shows values between 2 to 10, while the right-hand side plot shows values between 25 to 2500.}
    \label{fig:fig2Lem7_11}
\end{figure}

\begin{figure}
    \centering
    \includegraphics[width=1.1\textwidth]{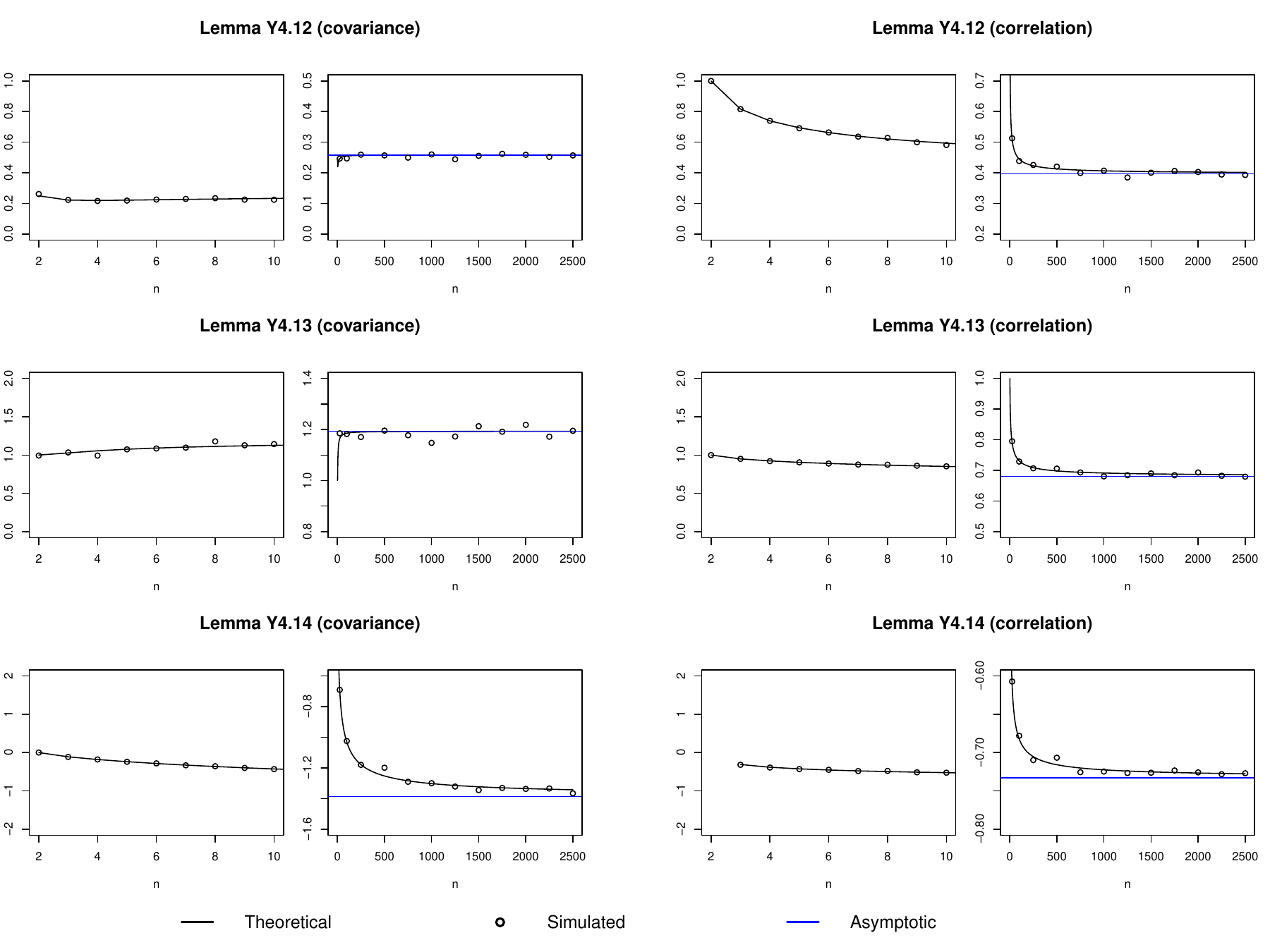}
    \caption{Comparison of simulated and theoretical values of Lemmata \ref{lemcovTaunETaunYn} to
    \ref{lemcovUnmTaunTaunmETaunYn}, where each lemma has a covariance and correlation part. For each lemma, the left-hand side plot shows values between 2 to 10, while the right-hand side plot shows values between 25 to 2500.}
    \label{fig:fig3Lem12_14}
\end{figure}

\begin{algorithm}
\caption{Sampling $\E{\tau^{(n)} \vert\mathcal{Y}_n}$}\label{simultaun}
\begin{algorithmic}[1]
\Require Number of tips, $n$.
\Ensure $\E{\tau^{(n)} \vert\mathcal{Y}_n}$
\State Generate a phylogenetic tree with $n$ number of tips, $\mathcal{Y}_n$, with internal nodes $z_k$, $k \in \{1, \cdots, n-1\}$, denoting the speciation points, and $T_1, \cdots, T_n$ denoting the time intervals between speciation times.
\For{$k = 1, \cdots, n-1$}
    \State Compute $\text{ntips}(z_k)$, number of tips of a subtree starting from $z_k$ (the $k$-th speciation event).
\EndFor
\For{$k = 1, \cdots, n-1$}
    \If{$\text{ntips}(z_k) > 2$}
        \State Set $l_{z_k}$, child node of $z_k$ on the left side.
        \State Set $r_{z_k}$, child node of $z_k$ on the right side.
        \State Compute $s_k := \text{ntips}(l_{z_k}) \times \text{ntips}(r_{z_k})$, number of possible pairs of tip nodes that coalesced at the $k$-th speciation event.
    \Else 
        \If{$\text{ntips}(z_k) == 2$}
            \State $s_k := 2$ ($z_k$ is a node above two tips)
        \EndIf
    \EndIf
\EndFor
\For{$k = 1, \cdots, n-1$}
    \State Compute $\E{1_{k}^{(n)}\vert \mathcal{Y}_{n}} := s_k / \sum_{j=1}^{n-1} s_j$
\EndFor

\Return $\E{\tau^{(n)} \vert\mathcal{Y}_n} = \sum_{k=1}^{n-1}\E{1_{k}^{(n)}\vert \mathcal{Y}_{n}}\sum_{j=k+1}^{n}T_j$

\end{algorithmic}
\end{algorithm}

\clearpage
\section*{Code availability}
In \url{https://github.com/krzbar/YuleHeightMoments}  we provide 
code verifying our calculations. We checked the formul\ae\ of Section \ref{secYuleMoments} numerically and where possible analytically with 
\proglang{Mathematica}. We further ran simulations in \proglang{R}, and in the repository random seeds, and output for  Section \ref{secNumericalSimul} can be found.

\section*{Acknowledgments}
KB, BB are supported by an ELLIIT Call C grant. 

\bibliographystyle{plainnat}
\bibliography{PCM_Mart}

\section*{Appendix YA: Proofs of probabilities of coalescing at nodes from \citet{KBar2018art}}
\labeltext{Appendix YA}{appCoalProof}

\renewcommand{\theequation}{YA.\arabic{equation}}
\renewcommand{\thelemma}{YA.\arabic{lemma}}
\setcounter{equation}{0}
\setcounter{lemma}{0}
In this section we provide proofs concerning coalescent events
taking place at certain nodes. These formul\ae\  were obtained
by \citet{KBar2018art} with the help of \proglang{Mathematica}, here we present
completely analytical derivations of them.

\begin{lemma}\label{lemE1kYn2}

We have that for $n>1$ and  $1 \le k \le n-1$

$$
\begin{array}{l}
\E{\E{1_{k}^{(n)} \vert \mathcal{Y}_{n}}^{2}} = 
\frac{16n(n+1)}{(n-1)^{2}(k+1)(k+2)(k+3)(k+4)}
-\frac{16(n+1)}{(n-1)^{2}(k+1)(k+2)(k+3)}
\\+\frac{80(n+1)}{(n-1)^{2}(k+1)(k+2)(k+3)(k+4)}
+\frac{4(n+1)}{(n-1)^{2}n(k+1)(k+2)} 
-\frac{32(n+1)}{(n-1)^{2}n(k+1)(k+2)(k+3)}
\\+\frac{96(n+1)}{(n-1)^{2}n(k+1)(k+2)(k+3)(k+4)}
\end{array}
$$
\end{lemma}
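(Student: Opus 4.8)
The plan is to reduce the statement to a moment computation for a uniform random composition, exploiting the conditional independence of two sampled pairs. Using the two independent copies $1_{k,1}^{(n)},1_{k,2}^{(n)}$ of $1_k^{(n)}$ introduced in Section~\ref{secYuleRVs}, conditional independence given $\mathcal{Y}_n$ gives $\E{\E{1_k^{(n)}\vert\mathcal{Y}_n}^2}=\E{1_{k,1}^{(n)}1_{k,2}^{(n)}}$. The key observation is that, given the tree, $\E{1_k^{(n)}\vert\mathcal{Y}_n}=ab/\binom{n}{2}$, where $a$ and $b$ are the numbers of tips descending from the two child lineages $L_1,L_2$ created at the $k$-th speciation event; this is exactly the count $s_k=\mathrm{ntips}(l_{z_k})\,\mathrm{ntips}(r_{z_k})$ of Algorithm~\ref{simultaun}, divided by the total number of pairs $\binom{n}{2}=n(n-1)/2$. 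Hence
\[
\E{\E{1_k^{(n)}\vert\mathcal{Y}_n}^2}=\frac{4}{n^2(n-1)^2}\,\E{a^2b^2}.
\]

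First I would pin down the joint law of $(a,b)$. Just after the $k$-th speciation there are $k+1$ lineages, two of which are $L_1$ and $L_2$. From this moment the pure-birth process splits a uniformly chosen lineage at each step, so the vector of eventual descendant counts of these $k+1$ lineages evolves as a P\'olya urn: a colour is chosen with probability proportional to its current lineage count and then reinforced by one. The classical Bose--Einstein/P\'olya result then states that, upon reaching $n$ tips, this vector is uniform over the positive compositions of $n$ into $k+1$ parts. Thus $(a,b)$ are two coordinates of such a uniform composition; equivalently $a=e_1+1$ and $b=e_2+1$, where $(e_1,\dots,e_{k+1})$ is uniform over the weak compositions of $N=n-k-1$ into $m=k+1$ nonnegative parts.

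The computation of $\E{a^2b^2}$ I would carry out through falling-factorial (binomial) moments. A generating-function count (from $\sum_{e\ge 0}\binom{e}{a}x^{e}=x^{a}/(1-x)^{a+1}$) gives, for any $a_1,a_2\ge 0$,
\[
\E{\binom{e_1}{a_1}\binom{e_2}{a_2}}=\frac{\binom{N+m-1}{a_1+a_2+m-1}}{\binom{N+m-1}{m-1}}=\frac{\binom{n-1}{a_1+a_2+k}}{\binom{n-1}{k}},
\]
using $N+m-1=n-1$ and $m-1=k$. Writing $(e+1)^2=2\binom{e}{2}+3\binom{e}{1}+1$ and expanding $(e_1+1)^2(e_2+1)^2$ into the six products $\binom{e_1}{a}\binom{e_2}{b}$ reduces $\E{a^2b^2}$ to
\[
\E{a^2b^2}=\frac{4\binom{n-1}{k+4}+12\binom{n-1}{k+3}+13\binom{n-1}{k+2}+6\binom{n-1}{k+1}+\binom{n-1}{k}}{\binom{n-1}{k}}.
\]
As a consistency check, the analogous first-moment computation yields $\E{ab}=n(n+1)/\big((k+1)(k+2)\big)$, which recovers $\E{1_k^{(n)}}=\pi_{n,k}$.

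Finally I would substitute into the displayed prefactor, expand each ratio via $\binom{n-1}{k+j}/\binom{n-1}{k}=\prod_{i=1}^{j}(n-k-i)/(k+i)$, and collect over the denominators $(k+1)(k+2)$, $(k+1)(k+2)(k+3)$ and $(k+1)(k+2)(k+3)(k+4)$ to reach the stated expression. The main obstacle is exactly this last algebraic simplification: it is routine but lengthy, since contributions of different $j$ must recombine so that the leading-in-$n$ parts produce the grouping with prefactors $\tfrac{n(n+1)}{(n-1)^2}$, $\tfrac{n+1}{(n-1)^2}$ and $\tfrac{n+1}{n(n-1)^2}$. Matching the six summands exactly is best confirmed with a computer algebra system, as in the original derivation of \cite{KBar2018art}.
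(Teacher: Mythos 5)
Your proposal is correct, and it takes a genuinely different route from the paper's. The paper evaluates $\E{1_{k,1}^{(n)}1_{k,2}^{(n)}}$ by conditioning on how the two sampled pairs relate---the same pair twice, pairs sharing one tip, or disjoint pairs---weighting each configuration by its sampling probability and computing the conditional probability that both coalesce at event $k$ as a telescoping product over speciation events, with the resulting single and double sums closed by the harmonic-sum lemmata; this is deliberate, since the appendix's purpose is to replace the \proglang{Mathematica} steps of \citet{KBar2018art} by analytic ones, and the overlapping- and disjoint-pair expressions are reused in the subsequent cross-moment lemma. You instead compute the conditional expectation exactly, $\E{1_{k}^{(n)}\vert\mathcal{Y}_{n}}=ab/\binom{n}{2}$ with $(a,b)$ the clade sizes of the two lineages born at event $k$, invoke the classical P\'olya-urn fact that the $k+1$ clade sizes form a uniform positive composition of $n$, and compute $\E{a^{2}b^{2}}$ via joint factorial moments of a uniform weak composition. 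Every step checks out: the identity $\E{\binom{e_1}{a_1}\binom{e_2}{a_2}}=\binom{n-1}{a_1+a_2+k}\big/\binom{n-1}{k}$ is right, your numerator $4\binom{n-1}{k+4}+12\binom{n-1}{k+3}+13\binom{n-1}{k+2}+6\binom{n-1}{k+1}+\binom{n-1}{k}$ is the correct expansion (the $13$ collecting $9$ from $\binom{e_1}{1}\binom{e_2}{1}$ plus $2+2$ from the marginal $\binom{e_i}{2}$ terms), your first-moment check recovers $\pi_{n,k}$, and I verified the end result agrees with the lemma at several $(n,k)$, including the degenerate cases $k=n-1$ and $n=2$. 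Your route is shorter, needs no case analysis or nested sums, and extends mechanically to higher moments of $\E{1_{k}^{(n)}\vert\mathcal{Y}_{n}}$; what it does not deliver is the per-configuration conditional expectations that the paper recycles in the next lemma for $k_{1}<k_{2}$, where your method would instead require the joint law of clade sizes at two distinct events. One improvement: you do not need a computer algebra system for the final step. With $u=n-1-k$ the entire simplification reduces to the single polynomial identity
\bd
\begin{array}{l}
4u(u-1)(u-2)(u-3)+12(k+4)u(u-1)(u-2)+13(k+3)(k+4)u(u-1)\\
\quad+6(k+2)(k+3)(k+4)u+(k+1)(k+2)(k+3)(k+4)=n(n+1)\left(4n^{2}-4n(k-1)+k^{2}-k+4\right),
\end{array}
\ed
which can be checked by hand (both sides are polynomials of total degree four), and whose right-hand side is exactly the bracket appearing in the paper's own intermediate line; the stated six-term form then follows by the same partial-fraction regrouping the paper performs.
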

\begin{proof}
We use the same approach as in \citepos{KBar2018art} 
Lemma $6.3$---considering the three possible ways that the two pairs could 
have been drawn, i.e., the same pair twice, they overlap with one tip, or are two disjoint pairs. For each of these three situations we have the expectation of the product of these indicator random variables (see below) multiplied by the probability of sampling such a pair. 
We remember that 
$$
\pi_{n,k}=\frac{2(n+1)}{n-1}\frac{1}{(k+1)(k+2)},
$$
and then

$$
\begin{array}{l}
\E{1_{k,1}^{(n)}1_{k,2}^{(n)}} = 
\binom{n}{2}^{-1}\pi_{n,k}
+2(n-2)\binom{n}{2}^{-1}4\frac{n+1}{(n-1)(n-2)}
\frac{n-(k+1)}{(k+1)(k+2)(k+3)}
\\ +\binom{n-2}{2}\binom{n}{2}^{-1}16\frac{n+1}{(n-1)(n-2)(n-3)}\frac{(n-(k+1))(n-(k+2))}{(k+1)(k+2)(k+3)(k+4)}
\\ =
\binom{n}{2}^{-1}\pi_{n,k}
\left(
1 + 4\frac{n-(k+1)}{k+3}+8\binom{n-2}{2}\frac{1}{(n-2)(n-3)}\frac{(n-(k+1))(n-(k+2))}{(k+3)(k+4)}
\right)
\\= 
\frac{4(n+1)}{n(n-1)^{2}}\frac{1}{(k+1)(k+2)}\left(
\frac{4n^{2}}{(k+3)(k+4)}
-\frac{4n(k-1)}{(k+3)(k+4)}+\frac{k^{2}-k+4}{(k+3)(k+4)}
\right)
\\=
\frac{16n(n+1)}{(n-1)^{2}(k+1)(k+2)(k+3)(k+4)}
-\frac{16(n+1)(k-1)}{(n-1)^{2}(k+1)(k+2)(k+3)(k+4)}
+\frac{4(n+1)(k^{2}-k+4)}{(n-1)^{2}n(k+1)(k+2)(k+3)(k+4)}.
\end{array}
$$
We rewrite it in a form that will be useful later

$$
\begin{array}{l}
=
\frac{16n(n+1)}{(n-1)^{2}(k+1)(k+2)(k+3)(k+4)}
-\frac{16(n+1)}{(n-1)^{2}(k+1)(k+2)(k+3)}
+\frac{80(n+1)}{(n-1)^{2}(k+1)(k+2)(k+3)(k+4)}
\\
+\frac{4(n+1)}{(n-1)^{2}n(k+1)(k+2)} 
-\frac{32(n+1)}{(n-1)^{2}n(k+1)(k+2)(k+3)}
+\frac{96(n+1)}{(n-1)^{2}n(k+1)(k+2)(k+3)(k+4)}
.
\end{array}
$$
For completeness it remains to fill in the 
parts of \citepos{KBar2018art} proof of the therein Lemma $6.3$ that  \proglang{Mathematica} was used for---$\E{1_{k,1}^{(n)}1_{k,2}^{(n)}}$ conditional on how the two random pairs were sampled, i.e., whether they are distinct or have a tip in common (see \citepos{KBar2018art} Fig. $4$).
We show that when they have a tip in common then the conditional expectation of the above product is

$$
\begin{array}{l}
\sum\limits_{j=k+1}^{n-1}\left(1-\frac{3}{\binom{n}{2}} \right)\cdots
\left(1-\frac{3}{\binom{j+2}{2}} \right)\frac{1}{\binom{j+1}{2}}
\left(1-\frac{1}{\binom{j}{2}} \right)\cdots\left(1-\frac{1}{\binom{k+2}{2}} \right)
\frac{1}{\binom{k+1}{2}}
\\ =
\sum\limits_{j=k+1}^{n-1}\left(
\frac{(n+2)(n-3)}{n(n-1)}\cdots\frac{(j+4)(j-1)}{(j+2)(j-1)}
\frac{2}{j(j+1)}\frac{(j+1)(j-2)}{j(j-1)}\cdots
\frac{(k+3)k}{(k+2)(k+1)}\frac{2}{k(k+1)}
\right)
\\=\sum\limits_{j=k+1}^{n-1}\left(
\frac{(n+2)!(n-3)!}{n!(n-1)!}\frac{(j+1)!j!}{(j+3)!(j-2)!}
\frac{2}{j(j+1)}
\frac{(j+1)!(j-2)!}{j!(j-1)!}\frac{(k+1)!k!}{(k+2)!(k-1)!}
\frac{2}{k(k+1)}
\right)
\\=\sum\limits_{j=k+1}^{n-1}\left(
\frac{(n+2)(n+1)}{(n-1)(n-2)}\frac{(j-1)j}{(j+2)(j+3)}
\frac{2}{j(j+1)}
\frac{(j+1)}{j}\frac{k}{(k+2)}
\frac{2}{k(k+1)}
\right)
\end{array}
$$
$$
\begin{array}{l}
\\=4\frac{(n+2)(n+1)}{(n-1)(n-2)}\frac{1}{(k+1)(k+2)}\sum\limits_{j=k+1}^{n-1}\frac{1}{(j+2)(j+3)}
 \stackrel{Lemma~\ref{HarXiv-lemSumj2j3}}{=}
4\frac{(n+2)(n+1)}{(n-1)(n-2)}\frac{1}{(k+1)(k+2)}\left(\frac{1}{3}-\frac{1}{n+2}\right)
\\=\frac{4(n+1)}{(n-1)(n-2)}\frac{(n-(k+1))}{(k+1)(k+2)(k+3)},
\end{array}
$$
and when the two pairs are distinct this conditional expectation becomes
(here a typo in \citepos{KBar2018art} proof of Lemma 6.3 is properly adjustes, the second, inner, sum should run up to $j_{2}-1$)

$$
\begin{array}{l}
\sum\limits_{j_{2}=k+2}^{n-1}\sum\limits_{j_{1}=k+1}^{j_{2}-1}
\left(1-\frac{6}{\binom{n}{2}} \right)\cdots
\left(1-\frac{6}{\binom{j_{2}+2}{2}} \right)\frac{4}{\binom{j_{2}+1}{2}}
\left(1-\frac{3}{\binom{j_{2}}{2}} \right)\cdots\left(1-\frac{3}{\binom{j_{1}+2}{2}} \right)
\frac{1}{\binom{j_{1}+1}{2}}
\\ \cdot \left(1-\frac{1}{\binom{j_{1}}{2}} \right)\cdots\left(1-\frac{1}{\binom{k+2}{2}} \right)\frac{1}{\binom{k+1}{2}}
\\ = 
\sum\limits_{j_{2}=k+2}^{n-1}\sum\limits_{j_{1}=k+1}^{j_{2}-1}\left(
\frac{(n+3)(n-4)}{n(n-1)}\cdots\frac{(j_{2}+5)(j_{2}-2)}{(j_{2}+2)(j_{2}+1)}
\frac{8}{j_{2}(j_{2}+1)}\frac{(j_{2}+2)(j_{2}+1)}{(j_{2}-1)(j_{2}-2)}
\frac{(j_{1}-1)j_{1}}{(j_{1}+2)(j_{1}+3)}
\frac{2}{j_{1}(j_{1}+1)}\frac{j_{1}+1}{j_{1}-1}\frac{k}{k+2}
\frac{2}{k(k+1)}
\right)
\\ = 
\frac{32(n+3)(n+2)(n+1)}{(n-1)(n-2)(n-3)(k+2)(k+1)}
\sum\limits_{j_{2}=k+2}^{n-1}\frac{1}{(j_{2}+3)(j_{2}+4)}
\sum\limits_{j_{1}=k+1}^{j_{2}-1}\frac{1}{(j_{1}+2)(j_{1}+3)}
\\ \stackrel{Lemma~\ref{HarXiv-lemSumj2j3}}{=}
\frac{32(n+3)(n+2)(n+1)}{(n-1)(n-2)(n-3)(k+2)(k+1)}\left(
\frac{1}{(k+3)}\sum\limits_{j_{2}=k+2}^{n-1}\frac{1}{(j_{2}+3)(j_{2}+4)}
-\sum\limits_{j_{2}=k+2}^{n-1}\frac{1}{(j_{2}+2)(j_{2}+3)(j_{2}+4)}
\right)
\\ \stackrel{Lemma~\ref{HarXiv-lemSumj3j4},\ref{HarXiv-lemSumj2j3j4}}{=}
\\
\frac{32(n+3)(n+2)(n+1)}{(n-1)(n-2)(n-3)(k+2)(k+1)}\left(
\frac{1}{(k+3)(k+5)}-\frac{1}{(k+3)(n+3)}-\frac{1}{2(k+4)}+\frac{1}{2(k+5)}+\frac{1}{2(n+2)}-\frac{1}{2(n+3)}
\right)
\\=
\frac{16(n+1)}{(n-1)(n-2)(n-3)}\frac{(n-(k+1))(n-(k+2))}{(k+1)(k+2)(k+3)(k+4)}.
\end{array}
$$
We can also recover $\var{\E{1_{k}^{(n)} \vert \mathcal{Y}_{n}}}$ 
(we correctly write the variance formula here as in \citepos{KBar2018art} Lemma $6.3$ it was copied with typos from \proglang{Mathematica's} output)

$$
\begin{array}{l}
 \var{\E{1_{k}^{(n)} \vert \mathcal{Y}_{n}}} =\E{\E{1_{k}^{(n)} \vert \mathcal{Y}_{n}}^{2}}   - \pi_{n,k}^{2} 
\\ =
\frac{16n(n+1)}{(n-1)^{2}(k+1)(k+2)(k+3)(k+4)} 
-\frac{16(n+1)(k-1)}{(n-1)^{2}(k+1)(k+2)(k+3)(k+4)} 
+\frac{4(n+1)(k^{2} -k + 4)}{(n-1)^{2}n(k+1)(k+2)(k+3)(k+4)}
\\
- \left(2\frac{n+1}{n-1}\frac{1}{(k+1)(k+2)} \right)^{2}
\\=
4\frac{n+1}{n(n-1)^{2}}\frac{(n-(k+1))(n(3k^{2}+5k-4)-(k^{3}+k^{2}+2k+8))}{(k+1)^{2}(k+2)^{2}(k+3)(k+4)} .
\end{array}
$$
\end{proof}
~\\~\\~\\

\begin{lemma}
For $1\le k_{1}<k_{2}\le n-1$ it holds that

\be
\begin{array}{l}
\E{\E{1_{k_{2}}^{(n)} \vert \mathcal{Y}_{n}}\E{1_{k_{1}}^{(n)} \vert \mathcal{Y}_{n}} } =
\frac{4(n+1)^{2}}{(n-1)^{2}(k_{1}+1)(k_{1}+2)(k_{2}+1)(k_{2}+2)}
\\-\frac{24n(n+1)}{(n-1)^{2}(k_{1}+1)(k_{1}+2)(k_{2}+1)(k_{2}+2)(k_{2}+3)(k_{2}+4)}
+\frac{32(n+1)}{(n-1)^{2}(k_{1}+1)(k_{1}+2)(k_{2}+1)(k_{2}+2)(k_{2}+3)}
\\-\frac{120(n+1)}{(n-1)^{2}(k_{1}+1)(k_{1}+2)(k_{2}+1)(k_{2}+2)(k_{2}+3)(k_{2}+4)}
-\frac{8(n+1)}{(n-1)^{2}n(k_{1}+1)(k_{1}+2)(k_{2}+1)(k_{2}+2)}
\\+\frac{64(n+1)}{(n-1)^{2}n(k_{1}+1)(k_{1}+2)(k_{2}+1)(k_{2}+2)(k_{2}+3)}
-\frac{144(n+1)}{(n-1)^{2}n(k_{1}+1)(k_{1}+2)(k_{2}+1)(k_{2}+2)(k_{2}+3)(k_{2}+4)}.
\end{array}
\ee
\end{lemma}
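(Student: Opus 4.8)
The plan is to follow the same route as in the proof of Lemma \ref{lemE1kYn2}, only now the two sampled pairs are asked to coalesce at two \emph{different} ranked speciation events. First I would use the tower property together with the conditional independence, given $\mathcal{Y}_{n}$, of the two independent tip--pair samples $1_{k_{2},1}^{(n)}$ and $1_{k_{1},2}^{(n)}$, which lets me rewrite the left--hand side as an unconditional expectation,
\[
\E{\E{1_{k_{2}}^{(n)}\vert \mathcal{Y}_{n}}\E{1_{k_{1}}^{(n)}\vert \mathcal{Y}_{n}}}
=\E{\E{1_{k_{2},1}^{(n)}1_{k_{1},2}^{(n)}\vert \mathcal{Y}_{n}}}
=\E{1_{k_{2},1}^{(n)}1_{k_{1},2}^{(n)}},
\]
i.e.\ the probability that the first randomly drawn pair coalesces at event $k_{2}$ and the second at event $k_{1}$. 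As in Lemma \ref{lemE1kYn2} I would then split according to how the two pairs were drawn: the same pair twice, pairs overlapping in exactly one tip, or two disjoint pairs, weighting each configuration by its sampling probability (a factor $\binom{n}{2}^{-1}$ per draw, times the number of such ordered pairs). The crucial simplification relative to Lemma \ref{lemE1kYn2} is that, because $k_{1}\neq k_{2}$ and a single pair has a unique coalescent rank, the ``same pair'' contribution now vanishes; only the overlap and disjoint cases survive.

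For the overlap case (three tips, a shared tip $s$ and two others $u,v$) I would reuse the backward ranked--coalescent telescoping products of Lemma \ref{lemE1kYn2}. The configuration compatible with the pair assigned to $k_{2}$ coalescing there and the sharing pair coalescing at $k_{1}$ (with $k_{1}<k_{2}$, so $k_{1}$ is the more ancient node) is: the three ancestral lineages avoid pairwise coalescence from $n$ lineages down to event $k_{2}$ (each non--coalescence carrying a factor $1-3/\binom{j}{2}$), the two tips of the $k_{2}$--pair merge at $k_{2}$ (factor $1/\binom{k_{2}+1}{2}$), and that merged lineage avoids coalescing with the third tip (factors $1-1/\binom{j}{2}$) until they join at $k_{1}$ (factor $1/\binom{k_{1}+1}{2}$); here one must keep only those configurations in which the recent merge is the pair assigned to $k_{2}$, discarding the one where the two non--shared tips merge first. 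For the disjoint case (four tips, pairs $\{a,b\}$ and $\{c,d\}$) I would likewise write the ranked--coalescent probability that $\{a,b\}$ first meet at $k_{2}$ and $\{c,d\}$ first meet at $k_{1}$; here, as in the disjoint part of Lemma \ref{lemE1kYn2}, the internal structure of the four--tip subtree is not pinned down by $k_{1},k_{2}$ alone, so I would sum over the intermediate ranks at which the relevant sub--lineages merge, obtaining nested sums analogous to the double sum $\sum_{j_{2}}\sum_{j_{1}}$ there.

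Each telescoping product collapses, exactly as in Lemma \ref{lemE1kYn2}, to a rational function of $j$, $k_{1}$, $k_{2}$ and $n$, and the remaining single and double sums I would evaluate with the harmonic--sum lemmata \ref{HarXiv-lemSumj2j3}, \ref{HarXiv-lemSumj3j4} and \ref{HarXiv-lemSumj2j3j4}. Adding the overlap and disjoint contributions with their sampling multiplicities and simplifying should produce the stated closed form, whose leading term $4(n+1)^{2}/\bigl((n-1)^{2}(k_{1}+1)(k_{1}+2)(k_{2}+1)(k_{2}+2)\bigr)=\pi_{n,k_{1}}\pi_{n,k_{2}}$ is reassuringly the ``independent'' product. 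The hard part will be the disjoint case: unlike in Lemma \ref{lemE1kYn2}, where both pairs coalesce at the \emph{same} rank $k$ and the four--tip subtree must split into two sub--lineages that merge at $k$, here the two target ranks differ, so between $k_{2}$ and $k_{1}$ the already--merged $\{a,b\}$ lineage may or may not absorb one of the remaining lineages before the final merge of $\{c,d\}$ at $k_{1}$. Enumerating these sub--configurations, attaching the correct non--coalescence factor $1-c/\binom{j}{2}$ (with the right number $c$ of tracked lineages) in each phase, and choosing the summation ranges so that no configuration is double counted, is the delicate bookkeeping; once it is set up correctly the sum evaluations are routine, and I would finally cross--check the result numerically and with \proglang{Mathematica} as elsewhere in the paper.
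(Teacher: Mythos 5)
Your route is essentially the one underlying the paper's argument. The paper itself takes a shortcut for the closed form---it quotes the covariance from \citepos{KBar2018art} Lemma $6.4$ and adds $\pi_{n,k_{1}}\pi_{n,k_{2}}$---and then supplies analytically exactly the two conditional expectations you propose to compute from scratch (these were the \proglang{Mathematica}--assisted steps in the original). Your reduction of the left--hand side to $\E{1_{k_{2},1}^{(n)}1_{k_{1},2}^{(n)}}$ via the tower property and conditional independence, the observation that the same--pair contribution vanishes because $k_{1}\neq k_{2}$, the sampling weights $2(n-2)/\binom{n}{2}$ and $\binom{n-2}{2}/\binom{n}{2}$, and your overlap--case analysis (a single admissible configuration, since the recent merge must be the pair assigned to $k_{2}$; its telescoping product collapses to $\frac{4(n+1)(n+2)}{(n-1)(n-2)}\frac{1}{(k_{1}+1)(k_{1}+2)(k_{2}+2)(k_{2}+3)}$) all match the paper's computation.

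The gap is in your enumeration of the disjoint case. You explicitly allow only two sub--configurations: all four lineages distinct down to $k_{2}$, the $k_{2}$--pair merging there, and the merged lineage either remaining separate from, or absorbing at some intermediate rank $k_{1}<j<k_{2}$ (merge factor $2/\binom{j+1}{2}$), one of the $k_{1}$--pair's lineages before the final merge at $k_{1}$. These are the paper's first two terms. You miss a third class: one tip of the $k_{2}$--pair may first coalesce with one tip of the $k_{1}$--pair already at a rank $j$ with $k_{2}<j\le n-1$ (merge factor $4/\binom{j+1}{2}$, non--coalescence factors $1-6/\binom{m}{2}$ above $j$ and $1-3/\binom{m}{2}$ between $j$ and $k_{2}$); the $k_{2}$--pair's MRCA is then still the node of rank $k_{2}$, reached through the cross--merged lineage, and the $k_{1}$--pair's MRCA is at $k_{1}$, so the indicators are both $1$. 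This contributes the paper's sum $\sum_{j=k_{2}+1}^{n-1}$, and without it the disjoint--case conditional expectation does not reduce to $\frac{4(n+1)(n+2)}{(n-1)(n-2)(n-3)}\cdot\frac{n(k_{2}+6)-5k_{2}-14}{(k_{1}+1)(k_{1}+2)(k_{2}+2)(k_{2}+3)(k_{2}+4)}$, so your assembled formula would come out wrong. You do flag the configuration bookkeeping as the delicate part, but this class is a missing idea rather than a routine detail: your phrasing presumes the four lineages stay distinct down to $k_{2}$, which is not forced by the event $\{1_{k_{2},1}^{(n)}=1\}$. Once this class is added, Lemmata \ref{HarXiv-lemSumj2j3} and \ref{HarXiv-lemSumj3j4} indeed suffice to evaluate all the resulting sums, and your plan coincides with the paper's derivation.
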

\begin{proof}
From \citepos{KBar2018art} Lemma $6.4$ we obtain

$$
\begin{array}{l}
\E{\E{1_{k_{2}}^{(n)} \vert \mathcal{Y}_{n}}\E{1_{k_{1}}^{(n)} \vert \mathcal{Y}_{n}} } =
\cov{\E{1_{k_{2}}^{(n)} \vert \mathcal{Y}_{n}}}{\E{1_{k_{1}}^{(n)} \vert \mathcal{Y}_{n}} } +
\pi_{n,k_{1}}\pi_{n,k_{2}}
\\=
\frac{-8(n+1)(3n-(k_{2}-2))(n-(k_{2}+1))}{n(n-1)^{2}(k_{1}+1)(k_{1}+2)(k_{2}+1)(k_{2}+2)(k_{2}+3)(k_{2}+4)}
+4\frac{(n+1)^{2}}{(n-1)^{2}}\frac{1}{(k_{1}+1)(k_{1}+2)(k_{2}+1)(k_{2}+2)}
\\=
\frac{4(n+1)^{2}}{(n-1)^{2}}\frac{1}{(k_{1}+1)(k_{1}+2)(k_{2}+1)(k_{2}+2)}
-\frac{24n(n+1)}{(n-1)^{2}}
\frac{1}{(k_{1}+1)(k_{1}+2)(k_{2}+1)(k_{2}+2)(k_{2}+3)(k_{2}+4)}
\\+\frac{24(n+1)}{(n-1)^{2}}\frac{1}{(k_{1}+1)(k_{1}+2)(k_{2}+1)(k_{2}+2)(k_{2}+3)}
-\frac{120(n+1)}{(n-1)^{2}}\frac{1}{(k_{1}+1)(k_{1}+2)(k_{2}+1)(k_{2}+2)(k_{2}+3)(k_{2}+4)}
\\+\frac{8(n+1)}{(n-1)^{2}}\frac{1}{(k_{1}+1)(k_{1}+2)(k_{2}+1)(k_{2}+2)(k_{2}+3)}
-\frac{8(n+1)}{n(n-1)^{2}}\frac{1}{(k_{1}+1)(k_{1}+2)(k_{2}+1)(k_{2}+2)}
\\+\frac{64(n+1)}{n(n-1)^{2}}\frac{1}{(k_{1}+1)(k_{1}+2)(k_{2}+1)(k_{2}+2)(k_{2}+3)}
-\frac{144(n+1)}{n(n-1)^{2}}\frac{1}{(k_{1}+1)(k_{1}+2)(k_{2}+1)(k_{2}+2)(k_{2}+3)(k_{2}+4)}.
\end{array}
$$
For completeness it remains to fill in the 
parts of \citepos{KBar2018art} proof of the therein Lemma $6.4$ that  \proglang{Mathematica} was used for---the value of the expectation of this product of indicator random variables, depending on how the two pairs of tips were sampled and how their coalescence pattern looks like (see \citepos{KBar2018art} Fig. $5$).
We show that when the two pairs have a tip in common,

$$
\begin{array}{l}
\left(1-\frac{3}{\binom{n}{2}} \right)\cdots
\left(1-\frac{3}{\binom{k_{2}+2}{2}} \right)\frac{1}{\binom{k_{2}+1}{2}}
\left(1-\frac{1}{\binom{k_{2}}{2}} \right)\cdots\left(1-\frac{1}{\binom{k_{1}+2}{2}} \right)
\frac{1}{\binom{k_{1}+1}{2}}
\\ =
\frac{(n+2)(n-3)}{n(n-1)}\cdots\frac{(k_{2}+4)(k_{2}-1)}{(k_{2}+2)(k_{2}-1)}
\frac{2}{k_{2}(k_{2}+1)}\frac{(k_{2}+1)(k_{2}-2)}{k_{2}(k_{2}-1)}\cdots
\frac{(k_{1}+3)k}{(k_{1}+2)(k_{1}+1)}\frac{2}{k_{1}(k_{1}+1)}
\\=
\frac{4(n+1)(n+2)}{(n-1)(n-2)}\frac{1}{(k_{1}+1)(k_{1}+2)(k_{2}+2)(k_{2}+3)},
\end{array}
$$
and when they are two disjoint pairs (here typo in \citepos{KBar2018art} proof of Lemma 6.4 is removed accordingly, in the third part of the sum it should write $1/\binom{k_{2}+1}{2}$)

$$
\begin{array}{l}
\left(1-\frac{6}{\binom{n}{2}} \right)\cdots
\left(1-\frac{6}{\binom{k_{2}+2}{2}} \right)\frac{1}{\binom{k_{2}+1}{2}}
\left(1-\frac{3}{\binom{k_{2}}{2}} \right)\cdots\left(1-\frac{3}{\binom{k_{1}+2}{2}} \right)
\frac{1}{\binom{k_{1}+1}{2}}
\\ +\sum\limits_{j=k_{1}+1}^{k_{2}-1}
\left(1-\frac{6}{\binom{n}{2}} \right)\cdots
\left(1-\frac{6}{\binom{k_{2}+2}{2}} \right)\frac{1}{\binom{k_{2}+1}{2}}
\left(1-\frac{3}{\binom{k_{2}}{2}} \right)\cdots\left(1-\frac{3}{\binom{j+2}{2}} \right)
\frac{2}{\binom{j+1}{2}}
\\ \cdot \left(1-\frac{1}{\binom{j}{2}} \right)\cdots\left(1-\frac{1}{\binom{k_{1}+2}{2}} \right)
\frac{1}{\binom{k_{1}+1}{2}}
\\ + \sum\limits_{j=k_{2}+1}^{n-1}
\left(1-\frac{6}{\binom{n}{2}} \right)\cdots
\left(1-\frac{6}{\binom{j+2}{2}} \right)\frac{4}{\binom{j+1}{2}}
\left(1-\frac{3}{\binom{j}{2}} \right)\cdots\left(1-\frac{3}{\binom{k_{2}+2}{2}} \right)
\frac{1}{\binom{k_{2}+1}{2}}
\\ \cdot \left(1-\frac{1}{\binom{k_{2}}{2}} \right)\cdots\left(1-\frac{1}{\binom{k_{1}+2}{2}} \right)
\frac{1}{\binom{k_{1}+1}{2}}
\end{array}
$$

$$
\begin{array}{l}
=
\frac{(n+3)(n+2)(n+1)}{(n-1)(n-2)(n-3)}\frac{k_{2}(k_{2}-1)(k_{2}-2)}{(k_{2}+4)(k_{2}+3)(k_{2}+2)}\frac{2}{k_{2}(k_{2}+1)}
\frac{(k_{2}+2)(k_{2}+1)}{(k_{2}-1)(k_{2}-2)}\frac{(k_{1}-1)k_{1}}{(k_{1}+2)(k_{1}+3)}\frac{2}{k_{1}(k_{1}+1)}
\\ +\sum\limits_{j=k_{1}+1}^{k_{2}-1}
\frac{(n+3)(n+2)(n+1)}{(n-1)(n-2)(n-3)}\frac{k_{2}(k_{2}-1)(k_{2}-2)}{(k_{2}+4)(k_{2}+3)(k_{2}+2)}\frac{2}{k_{2}(k_{2}+1)}
\frac{(k_{2}+2)(k_{2}+1)}{(k_{2}-1)(k_{2}-2)}\frac{(j-1)j}{(j+2)(j+3)}\frac{4}{j(j+1)}\frac{(j+1)}{(j-1)}\frac{k_{1}}{k_{1}+2}\frac{2}{k_{1}(k_{1}+1)}
\\ + \sum\limits_{j=k_{2}+1}^{n-1}
\frac{(n+3)(n+2)(n+1)}{(n-1)(n-2)(n-3)}\frac{j(j-1)(j-2)}{(j+4)(j+3)(j+2)}\frac{8}{j(j+1)}
\frac{(j+2)(j+1)}{(j-1)(j-2)}\frac{(k_{2}-1)k_{2}}{(k_{2}+2)(k_{2}+3)}\frac{2}{k_{2}(k_{2}+1)}
\frac{(k_{2}+1)}{(k_{2}-1)}\frac{k_{1}}{k_{1}+2}\frac{2}{k_{1}(k_{1}+1)}
\\=
\frac{4(n+3)(n+2)(n+1)}{(n-1)(n-2)(n-3)}\frac{1}{(k_{2}+3)(k_{2}+4)}\frac{(k_{1}-1)}{(k_{1}+1)(k_{1}+2)(k_{1}+3)}
\\ +
\frac{16(n+3)(n+2)(n+1)}{(n-1)(n-2)(n-3)}\frac{1}{(k_{2}+3)(k_{2}+4)}
\frac{1}{(k_{1}+1)(k_{1}+2)}
\sum\limits_{j=k_{1}+1}^{k_{2}-1}\frac{1}{(j+2)(j+3)}
\\ + \frac{32(n+3)(n+2)(n+1)}{(n-1)(n-2)(n-3)}
\frac{1}{(k_{2}+2)(k_{2}+3)}\frac{1}{(k_{1}+1)(k_{1}+2)}
\sum\limits_{j=k_{2}+1}^{n-1}\frac{1}{(j+3)(j+4)}
\end{array}
$$

$$
\begin{array}{l}
\stackrel{Lemmata~\ref{HarXiv-lemSumj2j3},\ref{HarXiv-lemSumj3j4} }{=}
\\
\frac{4(n+3)(n+2)(n+1)}{(n-1)(n-2)(n-3)}\frac{1}{(k_{2}+3)(k_{2}+4)}\frac{(k_{1}-1)}{(k_{1}+1)(k_{1}+2)(k_{1}+3)}
\\ +
\frac{16(n+3)(n+2)(n+1)}{(n-1)(n-2)(n-3)}\frac{1}{(k_{2}+3)(k_{2}+4)}
\frac{1}{(k_{1}+1)(k_{1}+2)}\left(\frac{1}{k_{1}+3}-\frac{1}{k_{2}+2}
\right)
\\ + \frac{32(n+3)(n+2)(n+1)}{(n-1)(n-2)(n-3)}
\frac{1}{(k_{2}+2)(k_{2}+3)}\frac{1}{(k_{1}+1)(k_{1}+2)}
\left(\frac{1}{k_{2}+4}-\frac{1}{n+3}
\right)
\\ =
\frac{4(n+1)(n+2)}{(n-1)(n-2)(n-3)}\frac{n(k_{2}+6)-5k_{2}-14}{(k_{1}+1)(k_{1}+2)(k_{2}+2)(k_{2}+3)(k_{2}+4)}
.
\end{array}
$$
\end{proof}

\section*{Appendix YB: Direct, alternative proofs of Lemmata 
\ref{lemUnmTaun2} and \ref{lemUnmETaunYn2}
}
\labeltext{Appendix YB}{appAltProof}

\noindent\textbf{Lemma \ref{lemUnmTaun2}} 
For a Yule tree with speciation rate $\lambda=1$ we have
\bd
\E{(U^{(n)}-\tau^{(n)})^{2}}= \frac{4n+2}{n-1}H_{n,2} -\frac{2H_{n,1}^{2}}{n-1}-\frac{4H_{n,1}}{n-1}
\ed
\begin{proof}
We derived Lemma \ref{lemUnmTaun2} from a joint moments formula but
the same result can be obtained directly. 
By Lemmata
\ref{HarXiv-lemSumHii1}, \ref{HarXiv-lemSumHi2i1i2}, and \ref{HarXiv-lemSumHisqi1i2app} we have
$$
\begin{array}{l}
\E{(U^{(n)}-\tau^{(n)})^{2}}= \sum\limits_{k=1}^{n-1}\pi_{n,k}\E{(U^{(n)}-\tau^{(n)})^{2}\vert \kappa_{n} = k}=
\sum\limits_{k=1}^{n-1}\pi_{n,k}\E{\left(\sum\limits_{i=1}^{k}T_{k}\right)^{2}}
\\=\sum\limits_{k=1}^{n-1}\pi_{n,k}\left(\sum\limits_{i=1}^{k}\E{T_{i}^{2}}+2\sum\limits_{i_{2}<i_{1}}^{k}\E{T_{i_{1}}T_{i_{2}}}\right)
=\frac{2(n+1)}{n-1}\sum\limits_{k=1}^{n-1}\frac{1}{(k+1)(k+2)}\left(\sum\limits_{i=1}^{k}\frac{2}{i^{2}}
+2\sum\limits_{i_{2}<i_{1}}^{k}\frac{1}{i_{1}}\frac{1}{i_{2}}\right)
\\
=\frac{2(n+1)}{n-1}\left(\sum\limits_{k=1}^{n-1}\frac{2H_{k,2}}{(k+1)(k+2)}
+2\sum\limits_{k=1}^{n-1}\frac{1}{(k+1)(k+2)}\left(\sum\limits_{i_{1}=1}^{k}\frac{H_{i_{1},1}}{i_{1}}-1-\sum\limits_{i_{1}=1}^{k}\frac{1}{i_{1}^{2}}+1\right)
\right)
\\
\stackrel{\mathrm{Lemma~ }\ref{HarXiv-lemSumHii1}}{=} 
\frac{2(n+1)}{n-1}\left(\sum\limits_{k=1}^{n-1}\frac{2H_{k,2}}{(k+1)(k+2)}
+2\sum\limits_{k=1}^{n-1}\frac{1}{(k+1)(k+2)}\left(\frac{1}{2}(H_{k,1}^{2}+H_{k,2})-H_{k,2}\right)
\right)
\\=\frac{2(n+1)}{n-1}\left(\sum\limits_{k=1}^{n-1}\frac{H_{k,2}}{(k+1)(k+2)}+\sum\limits_{k=1}^{n-1}\frac{H_{k,1}^{2}}{(k+1)(k+2)}\right)
\\ 
\stackrel{\mathrm{Lemmata~}\ref{HarXiv-lemSumHi2i1i2},~\ref{HarXiv-lemSumHisqi1i2app}}{=}
\frac{2(n+1)}{n-1}\left(\frac{n}{n+1}\left(H_{n,2}-1\right)+H_{n,2}
+\frac{(n-1)^{2}+n-1-1}{n^{2}}
- 2\frac{H_{n-1,1}}{n} -\frac{H_{n-1,1}^{2}}{n+1}\right)
\\ =
\frac{4n+2}{n-1}H_{n,2} -\frac{2H_{n,1}^{2}}{n-1}-\frac{4H_{n,1}}{n-1}
.
\end{array}
$$
We can see that we obtain the same formula as in Eq. \eqref{eqUnmTaun2}.
\end{proof}

\noindent\textbf{Lemma \ref{lemUnmETaunYn2}} 
For a Yule tree with speciation rate $\lambda=1$ we have  
\bd
\begin{array}{l}
\E{(U^{(n)}-\E{\tau^{(n)}\vert \mathcal{Y}_{n}})^{2}} = 
\frac{4}{3}H_{n,2}+3-8n^{-1}H_{n,1}+\Theta(n^{-1})
\xrightarrow{n\to \infty} \frac{2\pi^{2}}{9}+3. 
\end{array}
\ed

\begin{proof}
We present here an alternative, direct, proof
of Lemma \ref{lemUnmETaunYn2}
based on the same techniques that were used 
to show Lemmata $11$ \citep{KBarSSag2015aart} and $5.1$, $5.3$, $5.5$ \citep{KBar2020art}.
Denote by $\tau^{(n)}_{1}$ and $\tau^{(n)}_{2}$ two versions of $\tau^{(n)}$
that are independent given $\mathcal{Y}_{n}$. This allows us to write
$$
\begin{array}{l}
\E{(U^{(n)}-\E{\tau^{(n)}\vert \mathcal{Y}_{n}})^{2}} =
\E{(\E{U^{(n)}-\tau^{(n)}\vert \mathcal{Y}_{n}})^{2}} 
=
\E{(U^{(n)}-\tau^{(n)}_{1})(U^{(n)}-\tau^{(n)}_{2})}
\\=
\sum\limits_{k=1}^{n-1} \E{\left(\sum\limits_{i=1}^{k}T_{i} \right)^{2}} \E{\E{1_{k}^{(n)} \vert \mathcal{Y}_{n}}^{2}}
+2
\sum\limits_{k_{1}=1}^{n-1}
\sum\limits_{k_{2}=k_{1}+1}^{n-1}
\E{\left(\sum\limits_{i_{1}=1}^{k_{1}}T_{i_{1}} \right)\left(\sum\limits_{i_{2}=1}^{k_{2}}T_{i_{2}} \right)} \E{1_{k,1}^{(n)}1_{k,2}^{(n)}}
\end{array}
$$
$$
\begin{array}{l}
=
\sum\limits_{k=1}^{n-1} \E{\left(\sum\limits_{i=1}^{k}T_{i} \right)^{2}} \E{\E{1_{k}^{(n)} \vert \mathcal{Y}_{n}}^{2}}
\\+2\left(
\sum\limits_{k_{1}=1}^{n-1}
\sum\limits_{k_{2}=k_{1}+1}^{n-1}
\E{\left(\sum\limits_{i_{1}=1}^{k_{1}}T_{i_{1}} \right)\left(\sum\limits_{i_{2}=1}^{k_{1}}T_{i_{2}} +  
\sum\limits_{i_{2}=k_{1}+1}^{k_{2}}T_{i_{2}}
\right)} \E{1_{k,1}^{(n)}1_{k,2}^{(n)}}
\right)
\\=
\sum\limits_{k=1}^{n-1} \E{\left(\sum\limits_{i=1}^{k}T_{i} \right)^{2}} \E{\E{1_{k}^{(n)} \vert \mathcal{Y}_{n}}^{2}}
\\+2\left(
\sum\limits_{k_{1}=1}^{n-1}
\sum\limits_{k_{2}=k_{1}+1}^{n-1}
\left(\E{\left(\sum\limits_{i_{1}=1}^{k_{1}}T_{i_{1}} \right)^{2}}
 +  
\E{\left(\sum\limits_{i_{1}=1}^{k_{1}}T_{i_{1}} \right) }
\E{\left(\sum\limits_{i_{2}=k_{1}+1}^{k_{2}}T_{i_{2}}
\right)} \right)\E{1_{k,1}^{(n)}1_{k,2}^{(n)}}
\right).
\end{array}
$$
Using that $T_{i}$ is exponentially distributed with rate $i$, we
calculate 

$$
\begin{array}{l}
E{\left(\sum\limits_{i=1}^{k}T_{i} \right)^{2}} 
=
\E{\sum\limits_{i=1}^{k}T_{i}^{2}+
2\sum\limits_{i_{1}=1}^{k}\sum\limits_{i_{2}=i_{1}+1}^{k}T_{i_{1}}T_{i_{2}}}
\\=
\sum\limits_{i=1}^{k}\E{T_{i}^{2}}+
2\sum\limits_{i_{1}=1}^{k}\sum\limits_{i_{2}=i_{1}+1}^{k}\E{T_{i_{1}}}\E{T_{i_{2}}}
=
2H_{k,2}+
2\sum\limits_{i_{1}=1}^{k}\frac{1}{i_{1}}\left(H_{k,1}-H_{i_{1},1}\right)
\\=
2H_{k,2}+
2H_{k,1}^{2}-2\sum\limits_{i_{1}=1}^{k}\frac{H_{i_{1},1}}{i_{1}}
\stackrel{Lemma~ \ref{HarXiv-lemSumHii1}}{=}
2H_{k,2}+
2H_{k,1}^{2}-\left(H_{k,1}^{2}+H_{k,2}\right)=
H_{k,2}+H_{k,1}^{2}.
\end{array} 
$$
We continue with plugging the above calculation back into our main derivations 

$$
\begin{array}{l}
\E{(U^{(n)}-\E{\tau^{(n)}\vert \mathcal{Y}_{n}})^{2}} =
\sum\limits_{k=1}^{n-1} \left(H_{k,2}+H_{k,1}^{2}\right)
\E{\E{1_{k}^{(n)} \vert \mathcal{Y}_{n}}^{2}}
\\+2\left(
\sum\limits_{k_{1}=1}^{n-1}
\sum\limits_{k_{2}=k_{1}+1}^{n-1}
\left(\left(H_{k_{1},2}+H_{k_{1},1}^{2}\right)
 +  H_{k_{1},1}\left(H_{k_{2},1}-H_{k_{1},1} \right)
 \right)\E{1_{k,1}^{(n)}1_{k,2}^{(n)}}
\right)
\\ = 
\sum\limits_{k=1}^{n-1} \left(H_{k,2}+H_{k,1}^{2}\right)
\E{\E{1_{k}^{(n)} \vert \mathcal{Y}_{n}}^{2}}
\\ +2\left(
\sum\limits_{k_{1}=1}^{n-1}
\sum\limits_{k_{2}=k_{1}+1}^{n-1}
\left(H_{k_{1},2}+H_{k_{1},1}^{2} +  H_{k_{1},1}H_{k_{2},1}-H_{k_{1},1}^{2}
 \right)\E{1_{k,1}^{(n)}1_{k,2}^{(n)}}
\right)
\end{array}
$$
$$
\begin{array}{l}
= 
\sum\limits_{k=1}^{n-1} \left(H_{k,2}+H_{k,1}^{2}\right)
\left(
\frac{16n(n+1)}{(n-1)^{2}(k+1)(k+2)(k+3)(k+4)}
-\frac{16(n+1)}{(n-1)^{2}(k+1)(k+2)(k+3)}
\right. \\ \left.
+\frac{80(n+1)}{(n-1)^{2}(k+1)(k+2)(k+3)(k+4)}
+\frac{4(n+1)}{(n-1)^{2}n(k+1)(k+2)} 
-\frac{32(n+1)}{(n-1)^{2}n(k+1)(k+2)(k+3)}
\right. \\ \left.
+\frac{96(n+1)}{(n-1)^{2}n(k+1)(k+2)(k+3)(k+4)}
\right)
\\ +2\left(
\sum\limits_{k_{1}=1}^{n-1}
\sum\limits_{k_{2}=k_{1}+1}^{n-1}
\left(H_{k_{1},2} +  H_{k_{1},1}H_{k_{2},1}
 \right)
 \left(
\frac{4(n+1)^{2}}{(n-1)^{2}(k_{1}+1)(k_{1}+2)(k_{2}+1)(k_{2}+2)}
\right. \right. \\ \left. \left. 
-\frac{24n(n+1)}{(n-1)^{2}(k_{1}+1)(k_{1}+2)(k_{2}+1)(k_{2}+2)(k_{2}+3)(k_{2}+4)}
+\frac{32(n+1)}{(n-1)^{2}(k_{1}+1)(k_{1}+2)(k_{2}+1)(k_{2}+2)(k_{2}+3)}
\right. \right. \\ \left. \left.
-\frac{120(n+1)}{(n-1)^{2}(k_{1}+1)(k_{1}+2)(k_{2}+1)(k_{2}+2)(k_{2}+3)(k_{2}+4)}
-\frac{8(n+1)}{(n-1)^{2}n(k_{1}+1)(k_{1}+2)(k_{2}+1)(k_{2}+2)}
\right. \right. \\ \left. \left.
+\frac{64(n+1)}{(n-1)^{2}n(k_{1}+1)(k_{1}+2)(k_{2}+1)(k_{2}+2)(k_{2}+3)}
-\frac{144(n+1)}{(n-1)^{2}n(k_{1}+1)(k_{1}+2)(k_{2}+1)(k_{2}+2)(k_{2}+3)(k_{2}+4)}
 \right)
\right)
\end{array}
$$

$$
\begin{array}{l}
 = 
\left(\frac{16n(n+1)}{(n-1)^{2}}+\frac{80(n+1)}{(n-1)^{2}}+\frac{96(n+1)}{(n-1)^{2}n}\right)
\sum\limits_{k=1}^{n-1}\frac{H_{k,2}+H_{k,1}^{2}}{(k+1)(k+2)(k+3)(k+4)}
\\-\left(\frac{16(n+1)}{(n-1)^{2}}+\frac{32(n+1)}{(n-1)^{2}n}\right)
\sum\limits_{k=1}^{n-1}\frac{H_{k,2}+H_{k,1}^{2}}{(k+1)(k+2)(k+3)}
 +\frac{4(n+1)}{(n-1)^{2}n} 
\sum\limits_{k=1}^{n-1}\frac{H_{k,2}+H_{k,1}^{2}}{(k+1)(k+2)}
\\ 
+2\sum\limits_{k_{1}=1}^{n-1}\frac{H_{k_{1},2}}{(k_{1}+1)(k_{1}+2)}\left(
\left(\frac{4(n+1)^{2}}{(n-1)^{2}}-\frac{8(n+1)}{(n-1)^{2}n} \right)
\sum\limits_{k_{2}=k_{1}+1}^{n-1}\frac{1}{(k_{2}+1)(k_{2}+2)}
\right. \\ \left. 
+\left(\frac{32(n+1)}{(n-1)^{2}}+\frac{64(n+1)}{(n-1)^{2}n}\right)
\sum\limits_{k_{2}=k_{1}+1}^{n-1}\frac{1}{(k_{2}+1)(k_{2}+2)(k_{2}+3)}
\right. \\ \left. 
-\left(\frac{24n(n+1)}{(n-1)^{2}}+\frac{120(n+1)}{(n-1)^{2}}+\frac{144(n+1)}{(n-1)^{2}n}\right)
\sum\limits_{k_{2}=k_{1}+1}^{n-1}\frac{1}{(k_{2}+1)(k_{2}+2)(k_{2}+3)(k_{2}+4)}
\right)
\\
+2\sum\limits_{k_{1}=1}^{n-1}\frac{H_{k_{1},1}}{(k_{1}+1)(k_{1}+2)}
\left(
\left(\frac{4(n+1)^{2}}{(n-1)^{2}}-\frac{8(n+1)}{(n-1)^{2}n} \right)
\sum\limits_{k_{2}=k_{1}+1}^{n-1}\frac{H_{k_{2},1}}{(k_{2}+1)(k_{2}+2)}
\right. \\ \left. 
+\left(\frac{32(n+1)}{(n-1)^{2}}+\frac{64(n+1)}{(n-1)^{2}n}\right)
\sum\limits_{k_{2}=k_{1}+1}^{n-1}\frac{H_{k_{2},1}}{(k_{2}+1)(k_{2}+2)(k_{2}+3)}
\right. \\ \left. 
-\left(\frac{24n(n+1)}{(n-1)^{2}}+\frac{120(n+1)}{(n-1)^{2}}+\frac{144(n+1)}{(n-1)^{2}n}\right)
\sum\limits_{k_{2}=k_{1}+1}^{n-1}\frac{H_{k_{2},1}}{(k_{2}+1)(k_{2}+2)(k_{2}+3)(k_{2}+4)}
\right)
\end{array}
$$
$$
\begin{array}{l}
\stackrel{Lemmata~ \ref{HarXiv-lemSum1},\ref{HarXiv-lemSumj1j2j3},\ref{HarXiv-lemSumj1j2j3j4},\ref{HarXiv-lemSumHii1i2},\ref{HarXiv-lemSumHj1j1j2j3},\ref{HarXiv-lemSumHj1j1j2j3j4}}{=}
\\ \left(\frac{16n(n+1)}{(n-1)^{2}}+\frac{80(n+1)}{(n-1)^{2}}+\frac{96(n+1)}{(n-1)^{2}n}\right)
\sum\limits_{k=1}^{n-1}\frac{H_{k,2}+H_{k,1}^{2}}{(k+1)(k+2)(k+3)(k+4)}
\\-\left(\frac{16(n+1)}{(n-1)^{2}}+\frac{32(n+1)}{(n-1)^{2}n}\right)
\sum\limits_{k=1}^{n-1}\frac{H_{k,2}+H_{k,1}^{2}}{(k+1)(k+2)(k+3)}
 +\frac{4(n+1)}{(n-1)^{2}n} 
\sum\limits_{k=1}^{n-1}\frac{H_{k,2}+H_{k,1}^{2}}{(k+1)(k+2)}
\\ 
+2\sum\limits_{k_{1}=1}^{n-1}\frac{H_{k_{1},2}}{(k_{1}+1)(k_{1}+2)}\left(
\left(\frac{4(n+1)^{2}}{(n-1)^{2}}-\frac{8(n+1)}{(n-1)^{2}n} \right)
\left(\frac{1}{k_{1}+2}-\frac{1}{n+1}\right)
\right. \\ \left. 
+\left(\frac{32(n+1)}{(n-1)^{2}}+\frac{64(n+1)}{(n-1)^{2}n}\right)
\left(\frac{1}{2(k_{1}+2)}-\frac{1}{2(k_{1}+3)}-\frac{1}{2(n+1)}+\frac{1}{2(n+2)} \right)
\right. \\ \left. 
-\left(\frac{24n(n+1)}{(n-1)^{2}}+\frac{120(n+1)}{(n-1)^{2}}+\frac{144(n+1)}{(n-1)^{2}n}\right)
\left(
\frac{1}{6(k_{1}+2)}-\frac{1}{3(k_{1}+3)}+\frac{1}{6(k_{1}+4)}
-\frac{1}{6(n+1)}+\frac{1}{3(n+2)}-\frac{1}{6(n+3)}
\right)
\right)
\\
+2\sum\limits_{k_{1}=1}^{n-1}\frac{H_{k_{1},1}}{(k_{1}+1)(k_{1}+2)}
\left(
\left(\frac{4(n+1)^{2}}{(n-1)^{2}}-\frac{8(n+1)}{(n-1)^{2}n} \right)
\left(
\frac{H_{k_{1},1}}{k_{1}+2}+\frac{1}{(k_{1}+1)(k_{1}+2)}+\frac{1}{k_{1}+2}
-\frac{H_{n,1}}{n+1}-\frac{1}{n+1}
\right)
\right. \\ \left. 
+\left(\frac{32(n+1)}{(n-1)^{2}}+\frac{64(n+1)}{(n-1)^{2}n}\right)
\left(
\frac{H_{k_{1},1}}{2(k_{1}+2)}
-\frac{H_{k_{1},1}}{2(k_{1}+3)}
+\frac{1}{4(k_{1}+2)}-\frac{1}{4(k_{1}+3)}
+\frac{1}{2(k_{1}+1)(k_{1}+2)}
-\frac{1}{2(k_{1}+1)(k_{1}+3)}
\right.\right. \\ \left.  \left. 
-\frac{H_{n,1}}{2(n+1)}+\frac{H_{n,1}}{2(n+2)}-\frac{1}{4(n+1)}+\frac{1}{4(n+2)}
\right)
\right. \\ \left. 
-\left(\frac{24n(n+1)}{(n-1)^{2}}+\frac{120(n+1)}{(n-1)^{2}}+\frac{144(n+1)}{(n-1)^{2}n}\right)
\left(
\frac{H_{k_{1},1}}{6(k_{1}+2)}-\frac{H_{k_{1},1}}{3(k_{1}+3)}+\frac{H_{k_{1},1}}{6(k_{1}+4)}
+\frac{1}{18(k_{1}+2)}-\frac{1}{9(k_{1}+3)}+\frac{1}{18(k_{1}+4)}
\right.\right. \\ \left.  \left. 
+\frac{1}{6(k_{1}+1)(k_{1}+2)}-\frac{1}{3(k_{1}+1)(k_{1}+3)}+\frac{1}{6(k_{1}+1)(k_{1}+4)}
-\frac{H_{n,1}}{6(n+1)}+\frac{H_{n,1}}{3(n+2)}-\frac{H_{n,1}}{6(n+3)}
-\frac{1}{18(n+1)}+\frac{1}{9(n+2)}-\frac{1}{18(n+3)}
\right)
\right)
\end{array}
$$

$$
\begin{array}{l}
=
\left(\frac{-4(2n^{2}+n-1) }{(n-1)^{2}n}\right)
\sum\limits_{k_{1}=1}^{n-1}\frac{H_{k_{1},2}}{(k_{1}+1)(k_{1}+2)}
+0\cdot\sum\limits_{k_{1}=1}^{n-1}\frac{H_{k_{1},2}}{(k_{1}+1)(k_{1}+2)^{2}}
+\left(\frac{16(n+1)(n+2)}{(n-1)^{2}}\right)
\sum\limits_{k_{1}=1}^{n-1}\frac{H_{k_{1},2}}{(k_{1}+1)(k_{1}+2)(k_{1}+3)}
\\ +\left(\frac{-8(n+1)(n+2)(n+3) }{(n-1)^{2}n}\right)
\sum\limits_{k_{1}=1}^{n-1}\frac{H_{k_{1},2}}{(k_{1}+1)(k_{1}+2)(k_{1}+4)}
+\left(\frac{16(n+1)(n+2)(n+3) }{(n-1)^{2}n}\right)
\sum\limits_{k_{1}=1}^{n-1}\frac{H_{k_{1},2}}{(k_{1}+1)(k_{1}+2)(k_{1}+3)(k_{1}+4)}
\\ +\frac{4(n+1)}{(n-1)^{2}n} 
\sum\limits_{k=1}^{n-1}\frac{H_{k,1}^{2}}{(k+1)(k+2)}
+0\cdot\sum\limits_{k_{1}=1}^{n-1}\frac{H_{k_{1},1}^{2}}{(k_{1}+1)(k_{1}+2)^{2}}
+\left(\frac{16(n+1)(n+2)}{(n-1)^{2}}\right)
\sum\limits_{k_{1}=1}^{n-1}\frac{H_{k_{1},1}^{2}}{(k_{1}+1)(k_{1}+2)(k_{1}+3)}
\\ +\left(\frac{-8(n+1)(n+2)(n+3) }{(n-1)^{2}n}\right)
\sum\limits_{k_{1}=1}^{n-1}\frac{H_{k_{1},1}^{2}}{(k_{1}+1)(k_{1}+2)(k_{1}+4)}
+\left(\frac{16(n+1)(n+2)(n+3) }{(n-1)^{2}n}\right)
\sum\limits_{k_{1}=1}^{n-1}\frac{H_{k_{1},1}^{2}}{(k_{1}+1)(k_{1}+2)(k_{1}+3)(k_{1}+4)}
\\ +0\cdot\sum\limits_{k_{1}=1}^{n-1}\frac{H_{k_{1},1}}{(k_{1}+1)^{2}(k_{1}+2)^{2}}
+\left(\frac{16(n+1)(n+2)}{3(n-1)^{2}}\right) 
\sum\limits_{k_{1}=1}^{n-1}\frac{H_{k_{1},1}}{(k_{1}+1)(k_{1}+2)^{2}}
\\ +\left(\frac{-8(3n(n+1)H_{n,1}+3n(n+1)-2)}{3(n-1)^{2}n}\right)
\sum\limits_{k_{1}=1}^{n-1}\frac{H_{k_{1},1}}{(k_{1}+1)(k_{1}+2)}
+\left(\frac{16(n+1)(n+2)}{3(n-1)^{2}}\right)
\sum\limits_{k_{1}=1}^{n-1}\frac{H_{k_{1},1}}{(k_{1}+1)(k_{1}+2)(k_{1}+3)}
\\ +\left(\frac{16(n+1)^{2}(n+2)}{(n-1)^{2}n}\right)
\sum\limits_{k_{1}=1}^{n-1}\frac{H_{k_{1},1}}{(k_{1}+1)^{2}(k_{1}+2)(k_{1}+3)}
+\left(\frac{-8(n+1)(n+2)(n+3)}{3(n-1)^{2}n}\right)
\sum\limits_{k_{1}=1}^{n-1}\frac{H_{k_{1},1}}{(k_{1}+1)(k_{1}+2)(k_{1}+4)}
\\ +\left(\frac{-8(n+1)(n+2)(n+3)}{(n-1)^{2}n}\right)
\sum\limits_{k_{1}=1}^{n-1}\frac{H_{k_{1},1}}{(k_{1}+1)^{2}(k_{1}+2)(k_{1}+4)}
\end{array}
$$
$$
\begin{array}{l}
=
\left(\frac{-4(2n^{2}+n-1) }{(n-1)^{2}n}\right)
\sum\limits_{k_{1}=1}^{n-1}\frac{H_{k_{1},2}}{(k_{1}+1)(k_{1}+2)}

+\left(\frac{16(n+1)(n+2)(2n+3)}{(n-1)^{2}n}\right)
\sum\limits_{k_{1}=1}^{n-1}\frac{H_{k_{1},2}}{(k_{1}+1)(k_{1}+2)(k_{1}+3)} 

\\ +\left(\frac{-24(n+1)(n+2)(n+3)}{(n-1)^{2}n}\right)
\sum\limits_{k_{1}=1}^{n-1}\frac{H_{k_{1},2}}{(k_{1}+1)(k_{1}+2)(k_{1}+4)} 

\\ +\frac{4(n+1)}{(n-1)^{2}n} 
\sum\limits_{k=1}^{n-1}\frac{H_{k,1}^{2}}{(k+1)(k+2)}

+\left(\frac{16(n+1)(n+2)(2n+3)}{(n-1)^{2}n}\right)
\sum\limits_{k_{1}=1}^{n-1}\frac{H_{k_{1},1}^{2}}{(k_{1}+1)(k_{1}+2)(k_{1}+3)} 

\\ +\left(\frac{-24(n+1)(n+2)(n+3)}{(n-1)^{2}n}\right)
\sum\limits_{k_{1}=1}^{n-1}\frac{H_{k_{1},1}^{2}}{(k_{1}+1)(k_{1}+2)(k_{1}+4)} 

\\ +\left(\frac{16(n+1)(n+2)}{3(n-1)^{2}}\right) 
\sum\limits_{k_{1}=1}^{n-1}\frac{H_{k_{1},1}}{(k_{1}+1)(k_{1}+2)^{2}}

 +\left(\frac{-8(3n(n+1)H_{n,1}+3n(n+1)-2)}{3(n-1)^{2}n}\right)
\sum\limits_{k_{1}=1}^{n-1}\frac{H_{k_{1},1}}{(k_{1}+1)(k_{1}+2)}

\\+\left(\frac{16(n+1)(n+2)}{3(n-1)^{2}}\right)
\sum\limits_{k_{1}=1}^{n-1}\frac{H_{k_{1},1}}{(k_{1}+1)(k_{1}+2)(k_{1}+3)}

 +\left(\frac{16(n+1)^{2}(n+2)}{(n-1)^{2}n}\right)
\sum\limits_{k_{1}=1}^{n-1}\frac{H_{k_{1},1}}{(k_{1}+1)^{2}(k_{1}+2)(k_{1}+3)}

\\+\left(\frac{-8(n+1)(n+2)(n+3)}{3(n-1)^{2}n}\right)
\sum\limits_{k_{1}=1}^{n-1}\frac{H_{k_{1},1}}{(k_{1}+1)(k_{1}+2)(k_{1}+4)}

 +\left(\frac{-8(n+1)(n+2)(n+3)}{(n-1)^{2}n}\right)
\sum\limits_{k_{1}=1}^{n-1}\frac{H_{k_{1},1}}{(k_{1}+1)^{2}(k_{1}+2)(k_{1}+4)}
\end{array}
$$
$$
\begin{array}{l}
\stackrel{Lemmata~\ref{HarXiv-lemSumHii1i2}, \ref{HarXiv-lemSumHii1i2sq}, 
\ref{HarXiv-lemSumHj1j1j2j3}, \ref{HarXiv-lemSumHj1j1j2j4},\ref{HarXiv-lemSumHj1j1sqj2j3}, 
\ref{HarXiv-lemSumHj1j1sqj2j4}, \ref{HarXiv-lemSumHi2i1i2}, \ref{HarXiv-lemSumHj2j1j3},
\ref{HarXiv-lemSumHj2j1j2j4}, \ref{HarXiv-lemSumHisqi1i2app}, \ref{HarXiv-lemSumHj1sqj1j2j3}, 
\ref{HarXiv-lemSumHj1sqj1j2j4}
}{=}
\\
=
\left(\frac{-4(2n^{2}+n-1) }{(n-1)^{2}n}\right)
\left(\frac{n}{n+1}(H_{n,2}-1) \right)
\\
+\left(\frac{16(n+1)(n+2)(2n+3)}{(n-1)^{2}n}\right)\left(\frac{1}{4}H_{n,2}-\frac{5}{16}-\frac{H_{n,2}}{2(n+1)}+\frac{H_{n,2}}{2(n+2)}+\frac{3}{8(n+1)}-\frac{1}{8(n+2)}\right)
\\
+\left(\frac{-24(n+1)(n+2)(n+3)}{(n-1)^{2}n}\right)
\left(\frac{7}{36}H_{n,2}-\frac{307}{1296}-\frac{H_{n,2}}{3(n+1)}+\frac{H_{n,2}}{6(n+2)}+\frac{H_{n,2}}{6(n+3)} +\frac{59}{216(n+1)}-\frac{13}{216(n+2)}-\frac{1}{54(n+3)}\right)
\\
 +\frac{4(n+1)}{(n-1)^{2}n} \left(H_{n,2}+\frac{(n-1)^{2}+n-2}{n^{2}}-\frac{2H_{n-1,1}}{n}-\frac{H_{n-1,1}^{2}}{n+1}\right)
\\
+\left(\frac{16(n+1)(n+2)(2n+3)}{(n-1)^{2}n}\right)
\left(\frac{1}{4}H_{n,2}- \frac{3}{16}-\frac{H_{n,1}^{2}}{2(n+1)}+\frac{H_{n,1}^{2}}{2(n+2)}-\frac{H_{n,1}}{2(n+1)} +\frac{H_{n,1}}{2(n+2)}+\frac{1}{8(n+1)}+\frac{1}{8(n+2)}\right)
\\
 +\left(\frac{-24(n+1)(n+2)(n+3)}{(n-1)^{2}n}\right)
\left(\frac{7}{36}H_{n,2} - \frac{161}{1296}-\frac{H_{n,1}^{2}}{3(n+1)}+\frac{H_{n,1}^{2}}{6(n+2)}+\frac{H_{n,1}^{2}}{6(n+3)}-\frac{7H_{n,1}}{18(n+1)}+\frac{5H_{n,1}}{18(n+2)}+\frac{H_{n,1}}{9(n+3)}
\right. \\ \left.
+\frac{13}{216(n+1)}+\frac{25}{216(n+2)} + \frac{1}{54(n+3)}\right)
\\
 +\left(\frac{16(n+1)(n+2)}{3(n-1)^{2}}\right) 
\left(3-H_{n+1,2}+H_{n+1,3}-\sum\limits_{j=1}^{n+1}\frac{H_{j,1}}{j^{2}}-\frac{H_{n,1}}{n+1}-\frac{2}{n+1}\right)
\\
+\left(\frac{-8(3n(n+1)H_{n,1}+3n(n+1)-2)}{3(n-1)^{2}n}\right)
\left(1-\frac{H_{n,1}}{n+1}-\frac{1}{n+1}\right)
\\
+\left(\frac{16(n+1)(n+2)}{3(n-1)^{2}}\right)
\left(\frac{1}{8}-\frac{H_{n,1}}{2(n+1)}+\frac{H_{n,1}}{2(n+2)}-\frac{1}{4(n+1)}+\frac{1}{4(n+2)}\right)
\end{array}
$$

$$
\begin{array}{l}
+\left(\frac{16(n+1)^{2}(n+2)}{(n-1)^{2}n}\right)
\left(\frac{1}{2}\sum\limits_{j=1}^{n}\frac{H_{j,1}}{j^{2}}-\frac{1}{2}H_{n,3}-\frac{9}{16}+\frac{3H_{n+1,1}}{4(n+1)}-\frac{H_{n+2,1}}{4(n+2)}+\frac{7}{8(n+1)}-\frac{3}{8(n+2)}-\frac{3}{4(n+1)^{2}}+\frac{1}{4(n+2)^{2}}\right)
\\
+\left(\frac{-8(n+1)(n+2)(n+3)}{3(n-1)^{2}n}\right)
\left(\frac{23}{216}-\frac{H_{n+1,1}}{3(n+1)}+\frac{H_{n+2,1}}{6(n+2)}+\frac{H_{n+3,1}}{6(n+3)}-
\frac{4}{9(n+1)}+\frac{5}{36(n+2)}+\frac{11}{36(n+3)}
\right. \\ \left.
+\frac{1}{3(n+1)^{2}}-\frac{1}{6(n+2)^{2}}-\frac{1}{6(n+3)^{2}}\right)
\\
+\left(\frac{-8(n+1)(n+2)(n+3)}{(n-1)^{2}n}\right)
\left(\frac{1}{3}\sum\limits_{j=1}^{n}\frac{H_{j,1}}{j^{2}}-\frac{1}{3}H_{n,3}-\frac{239}{648}+\frac{4H_{n+1,1}}{9(n+1)}-\frac{H_{n+2,1}}{18(n+2)}-\frac{H_{n+3,1}}{18(n+3)}
\right. \\ \left. 
+\frac{13}{27(n+1)}-\frac{5}{108(n+2)}-\frac{11}{108(n+3)}-\frac{4}{9(n+1)^{2}}+\frac{1}{18(n+2)^{2}}+\frac{1}{18(n+3)^{2}}\right)
\end{array}
$$

$$
\begin{array}{l}
=
0\cdot\sum\limits_{j=1}^{n}\frac{H_{j,1}}{j^{2}}
+0\cdot H_{n,3}
+\frac{4(n^{2}-6n-4)H_{n,2}}{3(n-1)^{2}}  
+\frac{9n^{3}+30n^{2}+27n+22}{3(n-1)^{2}n}
+\frac{4H_{n,1}^{2}}{(n-1)^{2}}
+\frac{-8(3n^{2}-1)H_{n,1}}{3(n-1)^{2}n}
+\frac{2(2n^{2}-7n-11)}{3(n-1)^{2}n}
\\=
\frac{4(n^{2}-6n-4)H_{n,2}}{3(n-1)^{2}}   
+\frac{9n^{3}+30n^{2}+27n+22}{3(n-1)^{2}n} 
+\frac{4H_{n,1}^{2}}{(n-1)^{2}}
+\frac{-8(3n^{2}-1)H_{n,1}}{3(n-1)^{2}n}
+\frac{2(2n^{2}-7n-11)}{3(n-1)^{2}n}
\\=
\frac{4(n^{2}-6n-4)nH_{n,2}
+9n^{3}+30n^{2}+27n+22
+12nH_{n,1}^{2}-8(3n^{2}-1)H_{n,1}+2(2n^{2}-7n-11)
}{3(n-1)^{2}n}
\\=
\frac{
4n^{3}H_{n,2}-24n^{2}H_{n,2}-16nH_{n,2}+9n^{3}+30n^{2}+27n+22+12nH_{n,1}^{2}-24n^{2}H_{n,1}+8H_{n,1}+4n^{2}-14n-22
}{3(n-1)^{2}n}
\\=
\frac{
4n^{3}H_{n,2}+9n^{3}
-24n^{2}H_{n,1}
-24n^{2}H_{n,2}+34n^{2}
+12nH_{n,1}^{2}-16nH_{n,2}+13n
+8H_{n,1}}{3(n-1)^{2}n}
\end{array}
$$
We can see that we obtain the same formula as in Eq. \eqref{eqEUnmtaun2}.
\end{proof}

\end{document}